\numberwithin{equation}{section}
\newtheorem{thm}{Theorem}[section]
\newtheorem{prop}[thm]{Proposition}
\newtheorem{lem}[thm]{Lemma}
\newtheorem{cor}[thm]{Corollary}
\newtheorem*{theorem*}{Theorem}
\theoremstyle{definition}
\newtheorem{rmk}[thm]{Remark}
\newtheorem{ex}[thm]{Example}
\newcommand{\jdt}{\operatorname{jdt}}
\def\C{{\mathcal C}}
\def\B{{\mathcal {BK}}}
\def\q{{\mathfrak {q}}}
\def\SSYT{{\textrm{SSYT}}}
\def\RSK{{\textrm{RSK}}}
\def\Rect{{\textrm{Rect}}}
\def\Evac{{\textrm{Evac}}}
\def\Boom{{\textrm{Boom}}}
\def\cellsize{.2}
\newcommand{\cell}[2]{
\psset{unit=\cellsize}
\rput(#2,-#1){\pspolygon(0,0)(0,1)(-1,1)(-1,0)}
\psset{unit=1cm}
}
\newenvironment{shape}[2]
{\psset{unit=\cellsize}
\begin{pspicture}(0,-#1)(#2,0) 
\psset{unit=1cm}
}
{\end{pspicture}}
\begin{document}
\title{The Berenstein-Kirillov group and cactus groups}
\author{Michael Chmutov}
\address{Department of Mathematics, University of Minnesota, Minneapolis, MN 55455, USA}
\author{Max Glick}
\address{Department of Mathematics, University of Connecticut, Storrs, CT 06269, USA}
\author{Pavlo Pylyavskyy}
\address{Department of Mathematics, University of Minnesota, Minneapolis, MN 55455, USA}
\thanks{M.C. was partially supported by NSF grant DMS-1503119. M. G. was partially supported by NSF grant DMS-1303482.  P. P. was partially supported by NSF grants DMS-1148634, DMS-1351590, and Sloan Fellowship.}
\keywords{Bernstein-Kirillov group, Bender-Knuth involutions, cactus group}

\begin{abstract}
Berenstein and Kirillov have studied the action of Bender-Knuth moves on semistandard tableaux. Losev has studied a cactus group action in Kazhdan-Lusztig theory; in type $A$ this action can also be identified in the work of Henriques and Kamnitzer. 
We establish the relationship between the two actions. We show that the Berenstein-Kirillov group is a quotient of the cactus group. We use this to derive previously unknown relations in the Berenstein-Kirillov group. 
We also determine precise implications between subsets of relations in the two groups, which yields a presentation for cactus groups in terms of Bender-Knuth generators.
\end{abstract}

\maketitle

\setcounter{tocdepth}{1}
\tableofcontents

\section{Introduction}

\subsection{Cactus groups}

Let us briefly recall the origins of cactus groups, as introduced by Henriques and Kamnitzer in \cite{HK}, see also \cite{De, DJS}. 

Coboundary categories were introduced by Drinfeld in \cite{Dr}. They are monoidal categories equipped with a {\it {commutor}} $$\sigma_{A,B} \colon A \otimes B \longrightarrow B \otimes A,$$ which satisfies two conditions:
\begin{enumerate}
 \item The commutor is an involution, i.e. $$\sigma_{B,A} \circ \sigma_{A,B} = I.$$
 \item One has $$\sigma_{A, C \otimes B} \circ (I \otimes \sigma_{B,C}) = \sigma_{B \otimes A, C} \circ (\sigma_{A,B} \otimes I).$$
\end{enumerate}
Coboundary categories are analogous to braided monoidal categories, with the second relation above playing the role of the braid relation. If one has a long tensor product in a coboundary category, it is natural to apply {\it {interval reversal}} operators to it. Specifically, for any $1 \leq i < j \leq n$
one can apply a reversal operator to the corresponding interval between $i$th and $j$th factors:
$$\q_{[i,j]} \colon A_1 \otimes \dotsc \otimes A_n \longrightarrow A_1 \otimes \dotsc \otimes A_{i-1} \otimes A_j \otimes \dotsc \otimes A_i \otimes A_{j+1} \otimes \dotsc \otimes A_n.$$
Henriques and Kamnitzer in \cite{HK} introduce the {\it {cactus group}} as the group of relations satisfied by interval reversals $\q_{[i,j]}$ in a coboundary category. Formally, the cactus group $\C_n$ is generated by $\q_{[i,j]}$, $1 \leq i < j \leq n$ subject to relations 
\begin{enumerate}
 \item $\q_{[i,j]}^2 = I,$
 \item $\q_{[i,j]} \q_{[k,l]} = \q_{[k,l]} \q_{[i,j]} \text{ if } j<k,$
 \item $\q_{[i,j]} \q_{[k,l]} \q_{[i,j]} = \q_{[i+j-l,i+j-k]} \text{ if } i \leq k < l \leq j.$
\end{enumerate}

\subsection{A theorem of Losev}

Davis, Januszkiewicz, and Scott in \cite{DJS} associate a group $\C_W$ to any finite Weyl group $W$  so that for $W$ of type $A$ the group $\C_W$ coincides with the Henriques-Kamnitzer cactus group defined above.  
The term {\it {mock reflection group}} is used in \cite{DJS} instead of {\it {cactus group}}. 

Losev defines an action of $\C_W$ on $W$ which satisfies the following two properties. 
\begin{itemize}
 \item The action preserves right Kazhdan-Lusztig cells and permutes the left ones.
 \item The action respects restriction to parabolic subgroups.
\end{itemize}
We refer the reader to \cite[Theorem 1.1]{L} for a precise statement of Losev's result. Of interest for us is a reformulation of his result in type $A$ which we proceed to describe. 

First, recall the notions of {\it {Robinson-Schensted-Knuth insertion}} and of {\it {Sch\"utzenberger involution}}, which are fundamental constructs in algebraic combinatorics. The RSK insertion is a map 
$$\RSK \colon w \mapsto (P,Q)$$ that maps a permutation $w \in S_n$ to a pair of standard Young tableaux $(P,Q)$. The Sch\"utzenberger involution $$\Evac \colon P \mapsto P'$$ maps a standard Young tableau $P$ to a different standard Young tableau $P'$ of the same shape. 
Both of those notions are classical and we refer the reader for example to Stanley's book \cite{St} for an accessible introduction and precise definitions. 

Now, the type $A$ case of the theorem of Losev can be stated as follows. Define operators $\tau_{[i,j]}$ acting on permutations $w\in S_n$ as follows: 
$$\tau_{[i,j]}(w) = w {|_{w[i,j] \leftarrow \RSK^{-1}((id \times \Evac)(\RSK(w[i,j])))}}$$ where $w[i,j]$ denotes the interval of $w$ between $i$th and $j$th positions. In other words, 
\begin{itemize}
 \item we restrict $w$ to the window between $i$th and $j$th positions,
 \item insert the resulting word using RSK,
 \item apply the Sch\"utzenberger involution to the $Q$ part of the resulting pair $(P,Q)$,
 \item apply reverse RSK to the resulting pair $(P,\Evac(Q))$,
 \item and finally replace the original interval $w[i,j]$ with the result.
\end{itemize}

\begin{thm}[\cite{L}] \label{thm:Losev}
 The operators $\tau_{[i,j]}$ satisfy the relations of the cactus group $\C_n$. 
\end{thm}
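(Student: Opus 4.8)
The plan is to treat the three families of cactus relations in turn; relations (1) and (2) are essentially formal, and the whole difficulty sits in relation (3).

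For (1), write $(P,Q)=\RSK(w[i,j])$. By definition $\tau_{[i,j]}$ replaces the window $w[i,j]$ by $\RSK^{-1}(P,\Evac(Q))$, whose insertion and recording tableaux are $P$ and $\Evac(Q)$; a second application of $\tau_{[i,j]}$ therefore replaces it by $\RSK^{-1}(P,\Evac^{2}(Q))=\RSK^{-1}(P,Q)=w[i,j]$, using that $\Evac$ is an involution, and since nothing outside positions $i,\dots,j$ is touched, $\tau_{[i,j]}^{2}=I$. For (2), observe that $\tau_{[i,j]}$ only rearranges the letters in positions $i,\dots,j$ and hence leaves the word read off from any disjoint block of positions unchanged; so when $j<k$ the operator $\tau_{[k,l]}$ receives exactly the same input with or without a preceding $\tau_{[i,j]}$, and symmetrically, and as the two operators also act on disjoint blocks of positions they commute.

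For (3), suppose $i\le k<l\le j$. Every operator appearing in the relation acts inside the block of positions $[i,j]$ and fixes the remaining letters of $w$ (note $[k,l]\subseteq[i,j]$ and $[\,i+j-l,\,i+j-k\,]\subseteq[i,j]$), so it suffices to prove the identity in the case $[i,j]=[1,m]$, i.e.\ for the word $v:=w[i,j]$ of length $m=j-i+1$ on its own; replacing $v$ by its standardization we may take $v\in S_{m}$. Setting $e:=\tau_{[1,m]}$ and relabelling $[k,l]$ as a subinterval $[a,b]\subseteq[1,m]$, relation (3) becomes the single identity
\[
e \circ \tau_{[a,b]} \circ e \;=\; \tau_{[\,m+1-b,\,m+1-a\,]},
\]
stating that conjugation by $e$ reflects the window $[a,b]$ about the centre of $[1,m]$. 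To prove this I would pass from words to tableaux: $e$ acts by $\RSK^{-1}\circ(\mathrm{id}\times\Evac)\circ\RSK$ — equivalently, on the plactic class of $v$, which is parametrized by its recording tableau, $e$ is the Sch\"utzenberger involution on recording tableaux — while $\tau_{[a,b]}$ is the analogous ``windowed'' operation seeing only the subword in positions $a,\dots,b$. The key input is a lemma describing how $\Evac$ interacts with \emph{cutting a word into two consecutive blocks}, namely the jeu-de-taquin fact that $\Evac$ reverses the order of the two blocks while evacuating each of them; this is what produces the relabelling $[a,b]\mapsto[m+1-b,m+1-a]$. Conceptually this is the assertion that the commutor $\sigma$ (interval reversal of length $2$) makes semistandard tableaux into a coboundary category, i.e.\ satisfies $\sigma_{B,A}\sigma_{A,B}=I$ together with the hexagon-type axiom $\sigma_{A,C\otimes B}\circ(I\otimes\sigma_{B,C})=\sigma_{B\otimes A,C}\circ(\sigma_{A,B}\otimes I)$; granting these, the full list of cactus relations for the iterated interval reversals $\q_{[i,j]}$ follows formally as in \cite{HK}, and one then only has to match the abstract interval reversal with $\tau_{[i,j]}$.

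The main obstacle is precisely this block-splitting behaviour of $\Evac$, equivalently the hexagon axiom for $\sigma$: it is the one place where evacuation must be reconciled with restriction to a sub-window, and verifying it requires the genuine combinatorics (a jeu-de-taquin or growth-diagram argument). By contrast, the involutivity of $\Evac$, the commutation of disjoint windows, and the passage from the coboundary axioms to the cactus presentation are all either immediate or routine bookkeeping.
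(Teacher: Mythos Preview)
Your handling of relations (1) and (2) is correct and essentially matches the paper's: involutivity follows from $\Evac^2=I$, and commutation from the fact that the two operators touch disjoint blocks of positions.

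For relation (3) you take a genuinely different route from the paper, essentially the Henriques--Kamnitzer one: establish the coboundary axioms for a commutor built from $\Evac$, deduce the cactus relations formally for abstract interval reversals, and then identify $\tau_{[i,j]}$ with those reversals. This is a viable strategy (the paper itself attributes Corollary~\ref{cor:tau} to \cite{HK}), but you correctly flag and then leave open the one substantive step, the hexagon axiom. Your formulation of the needed lemma (``$\Evac$ reverses two consecutive blocks while evacuating each'') is too loose to be usable: $\tau_{[a,b]}$ evacuates the $Q$-tableau of a \emph{sub}word, and relating that to evacuation of the full $Q$-tableau under conjugation by $e=\tau_{[1,m]}$ is really a three-block statement whose verification requires nontrivial jeu-de-taquin; on top of that you would still owe the check that the iterated commutor actually equals $\tau_{[i,j]}$.

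The paper's argument is structurally different. It recasts everything as semistandard \emph{growth diagrams}: it first defines tableau operators $q_{[i,j]}$ and proves the cactus relations for them directly, by dissecting a single $\Evac_l$ diagram into regions (Figure~\ref{fig:cactus}) and comparing the fillings produced by two related inputs, using only the local growth rule, confluence of jdt, and commutation of jdt with crystal operators. It then transfers the relations to $\tau_{[i,j]}$ via Theorem~\ref{thm:q2tau}, which shows that $\tau_{[i,j]}$ is obtained from $q_{[i,j]}$ by conjugating with a fixed jeu-de-taquin rectification. What the growth-diagram approach buys is a single visual calculus in which the block interactions and the hexagon-type compatibilities are handled simultaneously; your approach outsources the formal deduction to \cite{HK} but would still require a concrete jeu-de-taquin proof of comparable difficulty for the step you have left open.
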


\begin{ex}
 When $j-i=2$, the operators $\tau_{[i,j]}$ act as Knuth relations, or equivalently as the $*$ operator from Kazhdan-Lusztig theory. For example, if we have $a = w_i < b = w_{i+2} < c = w_{i+1}$, then the RSK insertion of $w[i,i+2]=acb$ looks as follows:
 \begin{displaymath}
\RSK(w[i,i+2]) = (P,Q) = \left(\tableau[sY]{a, b\\ c}, \tableau[sY]{1, 2\\ 3} \right).
\end{displaymath}
 The Sch\"utzenberger involution applied to $Q$ gives 
  \begin{displaymath}
\Evac(Q) = \tableau[sY]{1, 3\\ 2}.
\end{displaymath}
Finally, $\RSK^{-1}(P,\Evac(Q)) = cab$. In other words, the action of $\tau_{[i,i+2]}$ on $w$ is as follows:
$$\tau_{[i,i+2]} \colon (\cdots acb \cdots)  \mapsto (\cdots cab \cdots) $$ which one immediately recognizes as a Knuth move applied to three consecutive positions of $w$. 
\end{ex}

\subsection{The Berenstein-Kirillov group}
{\it {Semistandard Young tableaux}} are also fundamental in algebraic combinatorics. They are fillings of Young diagrams with positive integers that weakly increase in rows and strictly increase  in columns.  

{\it {Bender-Knuth moves}} are certain involutions on semistandard tableaux. For each $i$ one has a Bender-Knuth move $t_i$ that acts on $i$'s and $(i+1)$'s only in a semistandard tableau, fixing the rest of the filling.
The reader can find details in Section \ref{sec:semistandard}. We also refer the reader to \cite{St} for more background on semistandard tableaux. 

In \cite{BK} Berenstein and Kirillov study relations satisfied by the $t_i$. The following object arises in their study: consider the free group generated by the $t_i$, $i \in \mathbb Z_{>0}$, modulo the relations satisfied by the $t_i$ when acting on all semistandard tableaux of all 
possible shapes. We call the resulting group $\B$ the {\it {Berenstein-Kirillov group}}, or the {\it {BK group}} for brevity. 

\begin{rmk}
The definition we give here uses {\it {combinatorial}} Bender-Knuth moves. One can extend the action of the $t_i$ to {\it {Gelfand-Tsetlin patterns}} filled with not necessarily integer entries, thus obtaining {\it {piecewise-linear}} Bender-Knuth moves. This is done in 
\cite{BK}. Finally, this piecewise-linear action can also be lifted to a birational setting in a natural way, providing {\it {birational}} Bender-Knuth moves. There are respectively three versions of the BK group, depending on which of the three actions one chooses to use. 
We expect the three groups to coincide, however we do not know if this is the case. In this paper we stick to the combinatorial BK group defined above.
\end{rmk}

While well-defined, the BK group does not come with an explicit set of relations. In \cite{BK} Berenstein and Kirillov list the following relations that hold among the $t_i$ in $\B$:
\begin{enumerate}
 \item $t_i^2= I, \;\; t_i t_j = t_j t_i \text{ if } |i-j|>1;$
 \item $(t_1 q_i)^4 = I \text{ where } q_i = t_1 (t_2 t_1) \dotsc (t_i t_{i-1} \dotsc t_1) \text{ and } i \geq 3;$
 \item $(t_1 t_2)^6 = I.$
\end{enumerate}

\subsection{The main results}

The main goal of this paper is to establish a precise relation between the BK group and the cactus group. In particular, we
{{use the relations of the BK group to reprove and generalize Losev's result}} in type $A$. The resulting theorem can also be deduced from a special case of the theory built by Henriques and Kamnitzer in \cite{HK}, thus we attribute this theorem to them below. 
We also use cactus groups to extend the list of known relations in $\B$, and we study which relations of the latter are equivalent to which relations of the former. 

Thus, the main results of this paper are Theorems \ref{thm:map}, \ref{thm:new_relations}, \ref{thm:rel1}, and \ref{thm:rel2} below. Even though Corollary \ref{cor:tau} is implicit in \cite{HK}, we believe our proof using growth diagram techniques is interesting enough to be 
also considered one of the main results of this paper. 

For $i<j$ define operators $q_{[i,j]}$ in $\B$ as follows:
$$q_{[i,j]} = q_{j-1} q_{j-i} q_{j-1},$$
where the $q_i$ are defined as before:
$$q_i = t_1 t_2 t_1 \dotsc t_i t_{i-1} \dotsc t_1.$$
Denote $\B_n$ the subgroup of $\B$ generated by the first $n-1$ of the $t_i$'s. The first main result of our paper is as follows. 

\begin{thm} \label{thm:map}
 The map $\q_{[i,j]} \mapsto q_{[i,j]}$ is a group homomorphism from the cactus group $\C_n$ to $\B_n$. 
\end{thm}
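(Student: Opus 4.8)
The plan is to verify that the three defining relations of $\C_n$ are respected by the assignment $\q_{[i,j]} \mapsto q_{[i,j]}$, working entirely inside $\B_n$ using the known relations (1)--(3) of the Berenstein-Kirillov group listed above. So I must establish, for the elements $q_{[i,j]} = q_{j-1} q_{j-i} q_{j-1}$ of $\B_n$:
\begin{enumerate}
 \item $q_{[i,j]}^2 = I$;
 \item $q_{[i,j]} q_{[k,l]} = q_{[k,l]} q_{[i,j]}$ when $j < k$;
 \item $q_{[i,j]} q_{[k,l]} q_{[i,j]} = q_{[i+j-l,\, i+j-k]}$ when $i \le k < l \le j$.
\end{enumerate}
The natural first step is to record the basic algebra of the $q_i$'s. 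In particular I would first prove that each $q_i$ is an involution, $q_i^2 = I$; this is essentially the statement that $q_i$ realizes the Sch\"utzenberger involution / evacuation on the first $i$ rows (equivalently, $q_i = t_1(t_2 t_1)\cdots(t_i\cdots t_1)$ squares to the identity), and it can be checked from relations (1)--(3), or cited as a standard fact about Bender-Knuth moves. Given $q_i^2 = I$, relation (i) above is immediate: $q_{[i,j]}^2 = q_{j-1}q_{j-i}q_{j-1}q_{j-1}q_{j-i}q_{j-1} = I$.

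For relation (ii), the key sub-fact I would isolate is a commutation statement among the $q_m$'s themselves — something like: $q_a$ commutes with the subgroup generated by $t_1,\dots,t_{b}$ when the indices are suitably separated, or more precisely a ``nesting/disjointness'' lemma describing how $q_a q_b$ relates to $q_b q_a$. The cleanest route is probably to observe that $q_{[i,j]}$ only involves the generators $t_1,\dots,t_{j-1}$, and to prove a conjugation identity expressing the effect of $q_{j-1}$ on $t_m$ (for $m < j-1$) as another Bender-Knuth-type element, so that $q_{j-1}q_{j-i}q_{j-1}$ can be rewritten as a word supported on $t_1,\dots,t_{j-1}$; then when $j<k$, the elements $q_{[i,j]}$ and $q_{[k,l]}$ are built from disjoint-enough generator sets (after using $t_at_b=t_bt_a$ for $|a-b|>1$) to commute. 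I would prove the needed conjugation/support lemma first and then deduce (ii).

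Relation (iii) is the crux and I expect it to be the main obstacle. The identity $q_{[i,j]}q_{[k,l]}q_{[i,j]} = q_{[i+j-l,i+j-k]}$ must come from manipulating $q_{j-1}q_{j-i}q_{j-1}\cdot q_{l-1}q_{l-k}q_{l-1}\cdot q_{j-1}q_{j-i}q_{j-1}$ down to $q_{j-1}q_{(i+j-k)-(i+j-l)}q_{j-1} = q_{j-1}q_{l-k}q_{j-1}$ — wait, more carefully the target is $q_{(i+j-k)-1}q_{l-k}q_{(i+j-k)-1}$. The engine here should be a ``braid-like'' relation satisfied by consecutive $q_m$'s, analogous to relation (3) of the cactus group and to the relation $(t_1q_i)^4 = I$; indeed relation (3) of the cactus group is exactly the statement that reversal-of-a-subinterval conjugates to another reversal, and on the BK side this should reduce to the identities $q_m q_{m'} q_m = q_m'$-type moves plus the $(t_1 q_i)^4$ relation when $k=l-1$ (the ``small'' case, which is a Knuth/Bender-Knuth move), with the general case following by induction on $l-k$ using the structure of $q_{[k,l]}$ in terms of smaller reversals. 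Concretely I would: (a) prove the $l-k=1$ base case directly from relations (1)--(3) of $\B$; (b) set up an inductive description $q_{[k,l]}$ in terms of $q_{[k,l-1]}$ and a single $q_m$, mirroring how interval reversals decompose; (c) push the conjugation by $q_{[i,j]}$ through this decomposition, using the already-established commutation lemma (ii) and the base case repeatedly. The bookkeeping of indices under conjugation — tracking that $[k,l]$ maps to $[i+j-l,i+j-k]$ at the level of the $q_m$ subscripts — will be the delicate part, and it is where a well-chosen normal form for the $q_{[i,j]}$ (perhaps expressing $q_{[i,j]}$ directly as a product $t_i t_{i+1}\cdots$ in a standard shape, as Berenstein-Kirillov do for evacuation) would most help; I would invest effort there before attempting (iii).
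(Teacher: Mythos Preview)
Your plan diverges from the paper's in a basic way: you are trying to derive the three cactus relations \emph{algebraically} from the listed relations (1)--(3) of Berenstein--Kirillov, whereas the paper proves Theorem~\ref{thm:map} by verifying the relations at the level of the \emph{action} on semistandard tableaux. Since $\B_n$ is, by definition, the free group on the $t_i$ modulo all relations that hold for this action, it suffices to show that each cactus relation holds as an identity of operators on tableaux. The paper does this via growth diagrams: the involution relation is immediate from $q_i^2=I$; the disjoint--interval commutation is deduced from a locality statement (Proposition~\ref{prop:qij_local}) proved using jeu-de-taquin and crystal operators; and the nesting relation is reduced to the case $i=1$ and then established by a growth-diagram comparison (Figure~\ref{fig:cactus}), again using crystal operators and confluence of jdt.

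Your specific plan for relation (ii) contains a genuine gap. You propose to show that $q_{[i,j]}$ can be rewritten so that it and $q_{[k,l]}$ (for $j<k$) are ``built from disjoint-enough generator sets'' and hence commute by $t_at_b=t_bt_a$. This fails: both $q_{[i,j]}=q_{j-1}q_{j-i}q_{j-1}$ and $q_{[k,l]}=q_{l-1}q_{l-k}q_{l-1}$ are words in $t_1,\dots,t_{l-1}$, and $q_{[i,j]}$ cannot in general be rewritten as a word in $t_i,\dots,t_{j-1}$ alone. For instance $q_{[2,3]}=t_1t_2t_1t_2t_1$ fixes the standard tableau with rows $12$ and $3$ but $t_2$ does not, so $q_{[2,3]}\notin\langle t_2\rangle=\{I,t_2\}$ in $\B$. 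Locality in the sense of Proposition~\ref{prop:qij_local} (that $q_{[i,j]}$ only changes entries in $[i,j]$ and the change depends only on $T|_{[i,j]}$) is a statement about the action, not about support in the generators $t_m$, and it is precisely what the paper uses to get commutation. In fact the paper later shows (Theorems~\ref{thm:rel1} and \ref{thm:rel2}) that relations (i) and (iii) are equivalent to just $t_i^2=I$ and $t_it_j=t_jt_i$ for $|i-j|>1$, while (ii) is equivalent to the \emph{additional} family $(t_a\,q_{[j,k]})^2=I$ for $a+1<j<k$; these are strictly stronger than the single relation $(t_1q_m)^4=I$ you have available, and the paper obtains them as a \emph{consequence} of the cactus relations (Theorem~\ref{thm:new_relations}), not as an input. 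So an algebraic derivation of (ii) from (1)--(3) would require an argument you have not supplied and which the paper does not provide either.

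Your outline for (iii) is also off in two smaller ways: it makes (iii) depend on (ii), which is unnecessary (and circular given the gap above), and it invokes $(t_1q_i)^4=I$ for the base case, whereas by Theorem~\ref{thm:rel1} the nesting relation already follows from $t_i^2=I$ and far-commutation alone. If you want a purely algebraic proof of (i) and (iii), the computations in the proof of Theorem~\ref{thm:rel1} (Lemmas~\ref{lem:qchange}--\ref{lem:tech2}) show how to do it; but for (ii) you should abandon the disjoint-support idea and argue via the tableau action as the paper does.
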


Extend the action of operators $\tau_{[i,j]}$ from permutations to {\it {words}}, as explained in Section \ref{sec:words}. The following corollary is the generalization of Losev's result \cite[Theorem 1.1]{L} in type $A$. 
It is also implicit in the work of Henriques and Kamnitzer, to whom we attribute it. 

\begin{cor}[\cite{HK}] \label{cor:tau}
 The operators $\tau_{[i,j]}$ satisfy the relations of the cactus group $\C_n$.
\end{cor}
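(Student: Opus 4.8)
The plan is to deduce Corollary \ref{cor:tau} from Theorem \ref{thm:map} by exhibiting the operators $\tau_{[i,j]}$ on words as a shadow of the Berenstein--Kirillov generators $q_{[i,j]}$ acting on semistandard tableaux. The bridge between the two worlds is RSK: given a word $v$ of length $n$, RSK-inserting it produces a pair $(P,Q)$ where $P$ is semistandard and $Q$ is standard of the same shape. The key observation I would establish is that applying $\tau_{[i,j]}$ to $v$ corresponds, on the insertion tableau $P$ together with a suitable reading of the recording tableau $Q$, to applying the product of Bender--Knuth moves $q_{[i,j]}=q_{j-1}q_{j-i}q_{j-1}$. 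More precisely, I expect that the operator $\tau_{[i,i+1]}$ (which reverses two adjacent letters, i.e.\ acts as an elementary transposition / Bender--Knuth-type move at position $i$ read through RSK) matches the generator that the $q_i$'s are built from, and that the interval-reversal structure is preserved.

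The steps, in order, would be: (1) Recall from Section \ref{sec:words} the precise definition of $\tau_{[i,j]}$ on words and the relevant facts about RSK for words (e.g.\ that restricting a word to a window and re-inserting, then taking the inverse, is exactly ``rectification after standardization'' in a controlled way). (2) Identify the ``elementary'' operators: show that for a word $v$, the action of $\tau_{[i,i+1]}$ corresponds under RSK to a single Bender--Knuth-type move on the standardization, and more generally that the whole family $\tau_{[i,j]}$ is obtained from these elementary moves by exactly the same words in the generators as the $q_{[i,j]}$ are obtained from the $t_k$ via $q_i = t_1 t_2 t_1\cdots t_i t_{i-1}\cdots t_1$ and $q_{[i,j]} = q_{j-1}q_{j-i}q_{j-1}$. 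This is the content-bearing step: one has to check that interval reversal on words, implemented as ``RSK, evacuate the $Q$-tableau, reverse-RSK,'' unwinds into this specific braided product of adjacent Bender--Knuth moves. (3) Conclude: since by Theorem \ref{thm:map} the assignment $\q_{[i,j]}\mapsto q_{[i,j]}$ respects the cactus relations in $\B_n$, and the $\tau_{[i,j]}$ are (via RSK) realized by the $q_{[i,j]}$, the $\tau_{[i,j]}$ satisfy the cactus relations $\C_n$ as well. Any cactus relation, being a consequence of the defining relations of $\C_n$, maps to a relation that holds among the $q_{[i,j]}$ in $\B$, hence holds among the $\tau_{[i,j]}$ on every word.

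A subtlety I would need to handle carefully is the difference between acting on permutations (as in Losev's original $\tau_{[i,j]}$) and acting on arbitrary words (the extension needed for the corollary), and the compatibility of RSK for words with standardization: a word's RSK insertion tableau $P$ is semistandard, and the Bender--Knuth moves act naturally on such $P$, but one must verify that the operation $\tau_{[i,j]}$ — defined via $\Evac$ on the standard recording tableau over a window — indeed coincides with the corresponding word in the $t_k$'s acting through RSK. The cleanest route is probably to pass through the standardization of $v$, where $\tau$ becomes the permutation-level operator and $\Evac$ is unambiguous, then descend; one checks the descent is consistent because Bender--Knuth moves commute appropriately with destandardization on the relevant ranges.

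The main obstacle I anticipate is precisely step (2): making rigorous that ``RSK-insert a window, evacuate the recording tableau, reverse-RSK'' is the same operator as the explicit alternating product $q_{j-1}q_{j-i}q_{j-1}$ of Bender--Knuth moves read through RSK. This requires a structural lemma relating the Schützenberger involution / evacuation on standard tableaux to iterated Bender--Knuth (jeu-de-taquin-type) moves — essentially the classical fact that evacuation is a composition of Bender--Knuth involutions, combined with the RSK-equivariance of these moves. Once that dictionary is in place, the corollary is immediate from Theorem \ref{thm:map}, since homomorphic images of cactus-group elements automatically satisfy all cactus relations.
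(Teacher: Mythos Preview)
Your high-level instinct in the first paragraph is exactly right and matches the paper: one shows that $\tau_{[i,j]}$ acting on a word $w$ is conjugate, via rectification, to $q_{[i,j]}$ acting on the insertion tableau $P=\jdt(T(w))$. The paper does precisely this (Theorem~\ref{thm:q2tau}), and then the cactus relations for $\tau$ follow immediately from those for $q$ (Theorem~\ref{thm:map}), because composing several $\tau_{[i,j]}$'s amounts to rectifying once, composing the corresponding $q_{[i,j]}$'s, and un-rectifying once.

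Where your detailed plan goes wrong is step~(2). You propose to identify $\tau_{[i,i+1]}$ with the Bender--Knuth generator $t_i$ and then argue that $\tau_{[i,j]}$ is the same word in the $\tau_{[k,k+1]}$ as $q_{[i,j]}$ is in the $t_k$. But $\tau_{[i,i+1]}$ does \emph{not} correspond to $t_i$: under the rectification bridge it corresponds to $q_{[i,i+1]}=q_i t_1 q_i$, and in $\B$ one has $q_{[i,i+1]}\neq t_i$ in general (for $i=2$ equality would force $(t_1t_2)^3=I$, whereas only $(t_1t_2)^6=I$ holds). Even setting this aside, your approach would require a homomorphism $\B_n\to(\text{operators on words})$ sending $t_k\mapsto\tau_{[k,k+1]}$, which means checking that the $\tau_{[k,k+1]}$ satisfy \emph{all} relations of $\B_n$ --- and those relations are not known explicitly. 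Alternatively, proving directly that $\tau_{[i,j]}$ decomposes into elementary $\tau$'s in the prescribed pattern is itself a cactus relation, so the argument becomes circular.

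The paper sidesteps both problems by never decomposing $\tau_{[i,j]}$: it uses a single growth-diagram argument (the diagram $\Boom_{[i,j]}$ glued to rectification rectangles) to establish $\tau_{[i,j]}=R^{-1}\,q_{[i,j]}\,R$ for arbitrary $[i,j]$ at once. One further point: for words (as opposed to permutations), the extension of $\tau_{[i,j]}$ acts on the subword of \emph{values} in $[i,j]$ and evacuates its $P$-tableau while fixing $Q$; your description via windows of positions and evacuating $Q$ is the permutation picture and does not extend directly. Reworking step~(2) as ``prove the conjugation $\tau_{[i,j]}=R^{-1}q_{[i,j]}R$ directly for all $i<j$'' would fix your outline and align it with the paper.
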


Our next result extends the list of known relations of the BK group.

\begin{thm} \label{thm:new_relations}
 The following relations are satisfied in $\B$ for $i+1 < j < k$:
 $$(t_i q_{[j,k]})^2 = I.$$
\end{thm}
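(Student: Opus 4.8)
The plan is to derive the relation from Theorem~\ref{thm:map} together with the cactus relations, by showing that $t_i$ and $q_{[j,k]}$ actually commute. Write $\phi\colon \C_n\to\B_n$ for the homomorphism $\q_{[a,b]}\mapsto q_{[a,b]}$ of Theorem~\ref{thm:map}. Since $t_i^2=I$ in $\B$ and $q_{[j,k]}^2=\phi(\q_{[j,k]}^2)=\phi(I)=I$ by cactus relation (1), the claim $(t_iq_{[j,k]})^2=I$ is equivalent to $t_iq_{[j,k]}=q_{[j,k]}t_i$.

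The key point is that, although $t_i$ is not one of the generators $q_{[a,b]}$, it lies in the subgroup of $\B$ generated by those $q_{[a,b]}$ with $1\le a<b\le i+1$. Indeed, using the identity $q_m^2=I$ we have $q_{[1,b]}=q_{b-1}^3=q_{b-1}$, so $q_1,\dots,q_i$ all lie in this subgroup; and from $q_m=q_{m-1}\,(t_mt_{m-1}\cdots t_1)$ one obtains $t_m=q_{m-1}^{-1}q_mq_{m-1}^{-1}q_{m-2}$ (with the convention $q_0:=I$), so $t_1,\dots,t_i\in\langle q_1,\dots,q_i\rangle\subseteq\langle q_{[a,b]}:b\le i+1\rangle$. (Equivalently, $\phi$ maps the ``interval'' cactus subgroup on $\{1,\dots,i+1\}$ onto all of $\B_{i+1}$.)

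Finally, because $i+1<j$, every generator $q_{[a,b]}$ with $b\le i+1$ satisfies $b<j$, so $\q_{[a,b]}$ and $\q_{[j,k]}$ commute in $\C_n$ by cactus relation (2); pushing this through $\phi$ and invoking Theorem~\ref{thm:map}, $q_{[a,b]}$ commutes with $q_{[j,k]}$ in $\B$. Since such $q_{[a,b]}$ generate a subgroup containing $t_i$, the element $t_i$ commutes with $q_{[j,k]}$, whence $(t_iq_{[j,k]})^2=t_it_iq_{[j,k]}q_{[j,k]}=I$. I expect the only real obstacle to be the middle paragraph---recognizing $t_i$ as a product of the $q_{[a,b]}$'s supported on $\{1,\dots,i+1\}$; once that is in place, the whole relation collapses to the disjoint-interval commutation (2) transported along $\phi$. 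As an independent check one can argue directly on semistandard tableaux: $q_{[j,k]}$ acts as a partial Sch\"utzenberger involution supported on the cells with entries in $\{j,\dots,k\}$, while for $i+1<j$ the move $t_i$ only permutes entries $<j$ and preserves both the subdiagram of cells with entries $<j$ and the subdiagram of cells with entries $\le k$, so the two operations cannot interfere.
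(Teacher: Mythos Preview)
Your proof is correct and follows essentially the same route as the paper: reduce $(t_iq_{[j,k]})^2=I$ to the commutativity of $t_i$ with $q_{[j,k]}$, express $t_i$ as a word in $q_1,\dots,q_i$ (the paper writes $t_i=q_{i-1}q_iq_{i-1}q_{i-2}$, your $t_m=q_{m-1}^{-1}q_mq_{m-1}^{-1}q_{m-2}$ is the same identity with the involutivity of the $q$'s not yet used), and then invoke the disjoint-interval cactus relation via Theorem~\ref{thm:map}. The only cosmetic difference is that you phrase the middle step as membership in the subgroup $\langle q_{[a,b]}:b\le i+1\rangle$, whereas the paper works directly with $q_1,\dots,q_i=q_{[1,2]},\dots,q_{[1,i+1]}$; your final tableau-level aside is an extra sanity check not present in the paper.
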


This is a direct generalization of the relation $(t_1 q_i)^4 = I$ found by Berenstein and Kirillov. Indeed, $$(t_1 q_i)^4 = (t_1 q_i t_1 q_i)^2 = (t_1 q_{[i,i+1]})^2.$$

Finally, one can make a more refined statement about which relations in $\C_n$ imply which relations in $\B_n$, and the other way around. Consider a free group generated by $t_i$, $i \in \mathbb Z_{>0}$, and another free group generated by $q_{[i,j]}$, $1 \leq i < j$. Consider the morphisms between those groups given by 
$$q_{[i,j]} \mapsto q_{j-1} q_{j-i} q_{j-1}, \text{ where } q_i = t_1 t_2 t_1 \dotsc t_i t_{i-1} \dotsc t_1$$ in one direction, and 
$$t_1 \mapsto q_{[1,2]}, \;\; t_2 \mapsto q_{[1,2]}q_{[1,3]}q_{[1,2]}, \;\; t_i \mapsto q_{[1,i]}q_{[1,i+1]}q_{[1,i]}q_{[1,i-1]} \text{ for } i>2$$ in the other direction. 

\begin{thm} \label{thm:rel1}
 The relations $$t_i^2=I, \;\; t_i t_j = t_j t_i \text{ if } |i-j|>1$$  are equivalent to the relations $$q_{[i,j]}^2 = I, \;\; q_{[i,j]} q_{[k,l]} q_{[i,j]} = q_{[i+j-l,i+j-k]} \text{ if } i \leq k < l \leq j.$$  More precisely, 
\begin{multline*}
\left\langle t_i| t_i^2=I, t_i t_j = t_j t_i \text{ if } |i-j|>1\right\rangle\cong\\
   \left\langle q_{[i,j]}| q_{[i,j]}^2 = I, q_{[i,j]} q_{[k,l]} q_{[i,j]} = q_{[i+j-l,i+j-k]} \text{ if } i \leq k < l \leq j
	     \right\rangle
\end{multline*}
and the above morphisms descend to an isomorphism (and its inverse).
\end{thm}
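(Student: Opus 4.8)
The plan is to verify that each of the two given morphisms carries the relations of one presentation to consequences of the relations of the other, so that both descend to well-defined group homomorphisms, and then to check that the two descended maps are mutually inverse. Write $G_t = \left\langle t_i \mid t_i^2 = I,\ t_i t_j = t_j t_i \text{ if } |i-j| > 1 \right\rangle$ and $G_q = \left\langle q_{[i,j]} \mid q_{[i,j]}^2 = I,\ q_{[i,j]} q_{[k,l]} q_{[i,j]} = q_{[i+j-l,\,i+j-k]} \text{ if } i \le k < l \le j \right\rangle$. Set $\phi\colon q_{[i,j]} \mapsto q_{j-1} q_{j-i} q_{j-1}$ with $q_i = t_1 t_2 t_1 \dotsc t_i t_{i-1} \dotsc t_1$, and $\psi\colon t_1 \mapsto q_{[1,2]}$, $t_2 \mapsto q_{[1,2]} q_{[1,3]} q_{[1,2]}$, $t_i \mapsto q_{[1,i]} q_{[1,i+1]} q_{[1,i]} q_{[1,i-1]}$ for $i > 2$.

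First I would establish a stock of identities valid in $G_t$. Using only $t_i^2 = I$ and far commutation, one checks that each $q_i$ is an involution (this is the standard fact that $q_i = w_0^{(i+1)}$-type reversal built from adjacent transpositions, but here derived purely from the two families of relations), and more generally that $q_{j-1} q_{j-i} q_{j-1}$ squares to the identity, giving $\phi(q_{[i,j]}^2) = I$. For the conjugation relation $q_{[i,j]} q_{[k,l]} q_{[i,j]} = q_{[i+j-l,i+j-k]}$, I would translate it into a statement about the $q_m$'s and prove it by the combinatorial identity $q_m q_{m-1} \dotsc$ behaves like the longest element conjugating a subinterval reversal to its mirror image; concretely one shows $q_a q_b q_a = q_b q_{a} q_b$ and similar "partial braid" identities hold in $G_t$, from which the image of the conjugation relation follows. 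Conversely, I would show $\psi$ sends $t_i^2 = I$ to a consequence of $q_{[i,j]}^2 = I$ (immediate for $t_1$; for $t_i$ one expands $(q_{[1,i]} q_{[1,i+1]} q_{[1,i]} q_{[1,i-1]})^2$ and collapses it using the conjugation relations, e.g. $q_{[1,i]} q_{[1,i+1]} q_{[1,i]} = q_{[2,i+1]}$), and sends the far-commutation $t_i t_j = t_j t_i$, $|i-j|>1$, to an identity that follows by repeatedly applying $q_{[a,b]} q_{[c,d]} q_{[a,b]} = q_{[a+b-d,a+b-c]}$ to move the shorter interval past the longer one until the two supports become disjoint, and then using that disjoint $q$'s commute (which itself must first be deduced inside $G_q$ from the given two relations — this is a small lemma).

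With both morphisms well-defined, I would check $\psi \circ \phi = \mathrm{id}_{G_q}$ and $\phi \circ \psi = \mathrm{id}_{G_t}$ on generators. For $\phi \circ \psi$: $\phi(\psi(t_1)) = \phi(q_{[1,2]}) = q_1 = t_1$, and for $i>2$, $\phi(\psi(t_i)) = (q_{i-1} q_1 q_{i-1})(q_i q_1 q_i)(q_{i-1} q_1 q_{i-1})(q_{i-2} q_1 q_{i-2})$, which I expect to telescope back to $t_i$ using the $q_m$-identities collected above. For $\psi \circ \phi$: $\psi(\phi(q_{[i,j]})) = \psi(q_{j-1}) \psi(q_{j-i}) \psi(q_{j-1})$, and one needs $\psi(q_m) = q_{[1,m+1]}$; this reduces to proving, by induction on $m$, that $\psi(t_1) \psi(t_2) \psi(t_1) \dotsc \psi(t_m) \dotsc \psi(t_1)$ collapses to $q_{[1,m+1]}$ in $G_q$, after which $\psi(\phi(q_{[i,j]})) = q_{[1,j]} q_{[1,j-i+1]} q_{[1,j]} = q_{[i,j]}$ by one application of the conjugation relation.

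The main obstacle I anticipate is the bookkeeping in these collapse/telescoping computations — in particular proving $\psi(q_m) = q_{[1,m+1]}$ by induction, since the inductive step requires a careful sequence of conjugation-relation applications in $G_q$, and dually proving the $q_m$-level identities in $G_t$ that make $\phi(q_{[i,j]})$ behave correctly under conjugation. None of these steps is conceptually deep, but organizing them so the inductions close cleanly (and isolating the auxiliary lemma that disjoint-support $q_{[i,j]}$ commute in $G_q$) is where the real work lies; everything else is formal once those identities are in hand.
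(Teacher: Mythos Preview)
Your overall architecture matches the paper's, but there is a genuine gap in your plan for the far-commutation relation. You propose to reduce $\psi(t_it_j)=\psi(t_jt_i)$ to the claim that disjoint-interval generators commute in $G_q$, calling this ``a small lemma''. That lemma is \emph{false}: commutation of $q_{[a,b]}$ and $q_{[c,d]}$ for disjoint intervals is precisely the third cactus relation, and Theorem~\ref{thm:rel2} shows that adjoining it to $G_q$ yields a proper quotient (the cactus group itself). Were it already a consequence of the two $G_q$ relations, Theorems~\ref{thm:rel1} and~\ref{thm:rel2} together would force $(t_iq_{[j,k]})^2=I$ to hold in the right-angled Coxeter group $G_t$, which it does not.

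Fortunately your ``push the shorter interval past the longer one'' idea works without ever reaching disjoint supports. Using only the conjugation relation (twice) one gets the shift identities
\[
q_{[a,b]}\,q_{[1,c]}q_{[1,c+1]}=q_{[1,c]}q_{[1,c+1]}\,q_{[a+1,b+1]},\qquad
q_{[a,b]}\,q_{[1,c]}q_{[1,c-1]}=q_{[1,c]}q_{[1,c-1]}\,q_{[a-1,b-1]},
\]
and since $\psi(t_j)=(q_{[1,j]}q_{[1,j+1]})(q_{[1,j]}q_{[1,j-1]})$, each factor $q_{[1,m]}$ of $\psi(t_i)$ passes through with its indices shifted up by one and then back down, emerging unchanged; this is exactly the paper's Lemma~\ref{lem:comm} and its proof of the commutation. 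Two smaller points: the identity $q_aq_bq_a=q_bq_aq_b$ you cite is false in $G_t$ (already $q_1q_2q_1=t_2$ while $q_2q_1q_2=t_1t_2t_1t_2t_1$), so showing that $\phi$ respects the conjugation relation requires a direct computation --- the paper carries it out via two technical lemmas in the elements $p_m=t_mt_{m-1}\cdots t_1$; and $\phi(q_{[1,m]})=q_{m-1}q_{m-1}q_{m-1}=q_{m-1}$, not $q_{m-1}q_1q_{m-1}$, which simplifies your $\phi\circ\psi$ check to verifying $q_{i-1}q_iq_{i-1}q_{i-2}=t_i$ in $G_t$.
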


\begin{thm} \label{thm:rel2}
 The relations $$t_i^2=I, \;\; t_i t_j = t_j t_i \text{ if } |i-j|>1 \text{  and  } (t_i q_{k-1}q_{k-j}q_{k-1})^2 = I \text{ for } i+1 < j < k$$ are equivalent to the relations 
 $$q_{[i,j]}^2 = I, \;\; q_{[i,j]} q_{[k,l]} q_{[i,j]} = q_{[i+j-l,i+j-k]} \text{ if } i \leq k < l \leq j,$$ $$\text{  and  } q_{[i,j]} q_{[k,l]} = q_{[k,l]} q_{[i,j]} \text{ if } j<k.$$
\end{thm}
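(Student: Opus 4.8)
The plan is to build on Theorem \ref{thm:rel1}, which already establishes the equivalence between the two "first and third type" relation sets together with an explicit isomorphism $\Phi$ (sending $t_i$ to the words in the $q_{[i,j]}$ displayed before Theorem \ref{thm:rel1}) and its inverse $\Psi$ (sending $q_{[i,j]}$ to $q_{j-1}q_{j-i}q_{j-1}$). Since Theorem \ref{thm:rel2} adds exactly one family of relations to each side of the equivalence in Theorem \ref{thm:rel1}, it suffices to show that, modulo the relations common to both theorems, the added families correspond to each other under $\Phi$ and $\Psi$. Concretely, I would prove two implications: first, that in the group $\langle t_i \mid t_i^2=I,\ t_it_j=t_jt_i\ \text{for}\ |i-j|>1\rangle$, adding the relations $(t_i q_{k-1}q_{k-j}q_{k-1})^2=I$ for $i+1<j<k$ forces, under $\Psi^{-1}=\Phi$, the commutation relations $q_{[i,j]}q_{[k,l]}=q_{[k,l]}q_{[i,j]}$ for $j<k$; and second, the reverse implication, that imposing those commutations forces the $(t_i q_{[j,k]})^2=I$ relations back.

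For the forward direction I would work on the cactus side. Using the defining identity $q_{[i,j]}=q_{j-1}q_{j-i}q_{j-1}$ (i.e. applying $\Psi$), the commutation $q_{[i,j]}q_{[k,l]}=q_{[k,l]}q_{[i,j]}$ for $j<k$ becomes a word identity in the $t$'s. The key observation is that $q_{[i,j]}$ for $j<k$ "lives in" $\B_j\subseteq\B_{k-1}$ and acts only on entries $\le j$, whereas $q_{[k,l]}$ involves the conjugating factors $q_{l-1},q_{l-k},q_{l-1}$; one must show these commute. I expect this to reduce, after expanding, to repeated application of the already-available relations $t_at_b=t_bt_a$ for $|a-b|>1$ together with the new relations $(t_iq_{k-1}q_{k-j}q_{k-1})^2=I$ — the latter being precisely what is needed to "move a low-index generator past a $q_{[j,k]}$-type block." So the forward direction amounts to carefully bookkeeping which generators interact and invoking $(t_iq_{[j,k]})^2=I$ each time a $t_i$ with $i+1<j$ must be pushed through the block $q_{[j,k]}$.

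For the reverse direction I would instead start from $q_{[i,j]}^2=I$, the conjugation relations, and the new commutations, and use the isomorphism $\Phi$ to re-express $t_i$ in the $q_{[1,m]}$'s; then $(t_iq_{[j,k]})^2=I$ — equivalently $t_iq_{[j,k]}t_i=q_{[j,k]}$ after using $t_i^2=q_{[j,k]}^2=I$ — becomes the assertion that the image of $t_i$ under $\Phi$ commutes with $q_{[j,k]}$ whenever $i+1<j<k$. Since $\Phi(t_i)$ is a short word in $q_{[1,i-1]},q_{[1,i]},q_{[1,i+1]}$, and all of $1,i-1,i,i+1<j$, this commutation should follow from the hypothesis $q_{[a,b]}q_{[c,d]}=q_{[c,d]}q_{[a,b]}$ for $b<c$ applied with $b\le i+1<j\le c$ — again with some massaging via the conjugation relations to handle overlapping index ranges. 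The main obstacle I anticipate is the combinatorial bookkeeping in both directions: verifying that the generator-index ranges genuinely separate (so that a commutation or a new relation is applicable at each step) and that no "boundary" cases with $i+1=j$ or $j=k$ sneak in; the relation $(t_iq_{[j,k]})^2=I$ is stated only for $i+1<j<k$, and one must confirm the correspondence never requires it outside that range. A secondary subtlety is that the new relations mix $t$'s and $q$'s, so one should first rewrite everything consistently on one side (all $t$'s or all $q$'s) using $\Phi,\Psi$ from Theorem \ref{thm:rel1} before attempting the comparison; once that normalization is done, the argument should parallel, and slightly extend, the proof of Theorem \ref{thm:rel1}.
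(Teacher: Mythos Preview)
Your proposal is correct and follows essentially the same route as the paper. The paper's argument is exactly the two implications you outline: for $\psi(q_{[i,j]}q_{[k,l]})=\psi(q_{[k,l]}q_{[i,j]})$ one notes that $\psi(q_{[i,j]})=q_{j-1}q_{j-i}q_{j-1}$ is a word in $t_1,\ldots,t_{j-1}$ and each such $t_a$ commutes with $\psi(q_{[k,l]})$ by the new relation (since $a+1\le j<k$); for $\phi((t_iq_{k-1}q_{k-j}q_{k-1})^2)=I$ one notes that $\phi(t_i)$ is a word in $q_{[1,i-1]},q_{[1,i]},q_{[1,i+1]}$ and each of these commutes with $q_{[j,k]}$ by the disjoint-interval relation (since $i+1<j$). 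The ``combinatorial bookkeeping'' and ``massaging via conjugation relations'' you worry about turn out to be unnecessary: in both directions the index separation is immediate and no conjugation identities intervene, so the proofs are each only two or three lines.
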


This means that the generators $t_i$, $i = 1, \ldots, n-1$ with relations 
$$t_i^2=I, \;\; t_i t_j = t_j t_i \text{ if } |i-j|>1 \text{  and  } (t_i q_{k-1}q_{k-j}q_{k-1})^2 = I \text{ for } i+1 < j < k$$
give an {\it {alternative presentation}} for the cactus group $\C_n$. It is natural to refer to the $t_i$ in this context as {\it {Bender-Knuth generators}} of the cactus group.

\begin{rmk}
 We see that $$(t_1 t_2)^6 = I$$ is the only known relation in $\B$ which does not follow from the cactus group relations. Its nature remains mysterious, as does the question of whether there are any other undiscovered relations in the BK group which do not 
 follow from the relations in the cactus group.
\end{rmk}

\begin{rmk}
Some of the results from this paper have been proven independently, and at roughly the same time, by Berenstein and Kirillov \cite{BK2}.
\end{rmk}

\medskip
\textbf{Acknowledgments.} We are grateful to Ivan Losev for attracting our attention to his work and for his comments on a previous draft of this paper.  We thank Jake Levinson for informative discussions and Darij Grinberg for numerous comments and reference suggestions.

\section{Background}
\subsection{Standard Young tableaux}
Let $\lambda$ be a partition of $n = |\lambda|$.  A \emph{standard Young tableau} of shape $\lambda$ is a filling of the Young diagram for $\lambda$ with the numbers $1,2,\ldots, n$ with the entries increasing along rows and down columns.  For example,
\begin{displaymath}
\tableau[sY]{ 1, 3, 4, 5\\ 2, 6, 8, 9 \\7}
\end{displaymath}
is a standard Young tableau of shape $441$. For a standard Young tableau $T$, we denote its shape by $\operatorname{sh}(T)$. A standard Young tableau $T$ can also be represented as a saturated chain 
\begin{displaymath}
\emptyset = \lambda^{(0)} \subset \lambda^{(1)} \subset \ldots \subset \lambda^{(n)} = \operatorname{sh}(T)
\end{displaymath}
in Young's lattice where $\lambda^{(k)}$ is the shape consisting of those boxes of $T$ with entry at most $k$.  The preceding example corresponds to the chain
\begin{displaymath}
\emptyset \subset 1 \subset 11 \subset 21 \subset 31 \subset 41 \subset 42 \subset 421 \subset 431 \subset 441.
\end{displaymath}

If $\mu \subseteq \nu$ are a pair of shapes one also speaks of standard Young tableaux of skew-shape $\nu / \mu$.  These are fillings of $\nu / \mu$ by the numbers $1,2,\ldots, |\nu| - |\mu|$ with the entries increasing along rows and down columns.  Such a tableau can be represented as a saturated chain of shapes
\begin{displaymath}
\mu = \lambda^{(0)} \subset \lambda^{(1)} \subset \ldots \subset \lambda^{(|\nu|-|\mu|)} = \nu.
\end{displaymath}

We will use heavily two operations on tableaux, namely Sch\"utzenberger involution and jeu-de-taquin.  Following \cite[Chapter 7, Appendix 1]{St}, we define both in terms of a local growth rule.  Let $\lambda, \mu, \nu$ be shapes such that $\nu \subset \mu \subset \lambda$ where each larger shape is obtained from the previous one by adding a single box, say $\mu = \nu \cup box_1$ and $\lambda = \mu \cup box_2$.  The \emph{growth rule} applied to this triple replaces $\mu$ with $\nu \cup box_2$ if this is in fact a shape, and otherwise does nothing.  The result, whether different from $\mu$ or the same, is denoted $\mu'$.  The rule is depicted pictorially as a single diamond with the vertices labeled as in the left of Figure \ref{fig:boxbybox}.  Note the growth rule starting from $\nu \subset \mu' \subset \lambda$ would recover $\mu$.  In other words, the rules used to pass through a diamond left to right and right to left are the same.

Sch\"utzenberger involution is a map on the set of standard Young tableaux.  Let $T$ be a standard Young tableau and let $\lambda^{(0)} \subset \lambda^{(1)} \subset \ldots \subset \lambda^{(n)}$ be the corresponding chain of shapes.  Place the shapes $\lambda^{(k)}$ along the left of a triangular diagram as in Figure \ref{fig:evac} and place $\emptyset$ at each vertex along the bottom.  The growth rule can then be used from left to right to attach shapes to all the vertices of the diagram.  Let $\Evac(T)$ be the standard Young tableau corresponding to the chain of shapes on the line from the bottom right to the top of the diagram.  Then $\Evac(T)$ is the outcome of Sch\"utzenberger involution (also known as evacuation) applied to $T$.

\begin{figure}
\begin{pspicture}(-.5,-.5)(9,5)
\psline(0,0)(4,4)(8,0)
\psline(5,3)(2,0)(1,1)
\psline(6,2)(4,0)(2,2)
\psline(7,1)(6,0)(3,3)
\uput[ul](0,0){$\emptyset$}
\uput[u](2,0){$\emptyset$}
\uput[u](4,0){$\emptyset$}
\uput[u](6,0){$\emptyset$}
\uput[ur](8,0){$\emptyset$}
\uput[ul](1,1){\begin{shape}{1}{1} \cell{1}{1} \end{shape}}
\uput[u](3,1){\begin{shape}{1}{1} \cell{1}{1} \end{shape}}
\uput[u](5,1){\begin{shape}{1}{1} \cell{1}{1} \end{shape}}
\uput[ur](7,1){\begin{shape}{1}{1} \cell{1}{1} \end{shape}}
\uput[ul](2,2){\begin{shape}{2}{1} \cell{1}{1} \cell{2}{1} \end{shape}}
\uput[u](4,2){\begin{shape}{2}{1} \cell{1}{1} \cell{2}{1} \end{shape}}
\uput[ur](6,2){\begin{shape}{1}{2} \cell{1}{1} \cell{1}{2} \end{shape}}
\uput[ul](3,3){\begin{shape}{3}{1} \cell{1}{1} \cell{2}{1} \cell{3}{1} \end{shape}}
\uput[ur](5,3){\begin{shape}{2}{2} \cell{1}{1} \cell{1}{2} \cell{2}{1} \end{shape}}
\uput[u](4,4){\begin{shape}{3}{2} \cell{1}{1} \cell{1}{2} \cell{2}{1} \cell{3}{1} \end{shape}}
\end{pspicture}
\caption{The growth diagram used to compute Sch\"utzenberger involution, with an example calculation carried out}
\label{fig:evac}
\end{figure}
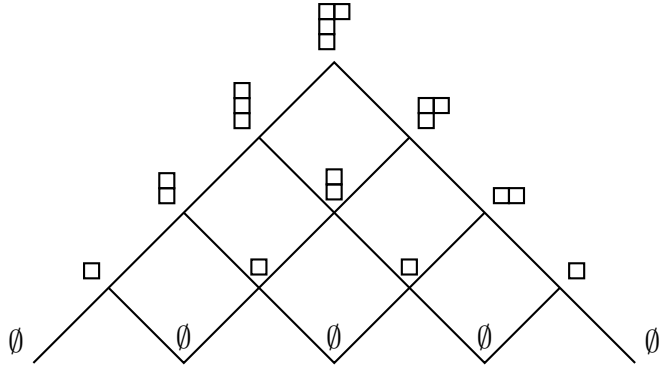

The example given in Figure \ref{fig:evac} illustrates the calculation
\begin{displaymath}
\Evac\left(\ \tableau[sY]{ 1, 4\\ 2\\3}\ \right) =  \tableau[sY]{1, 2\\ 3\\ 4}.
\end{displaymath}
The fact that the growth rule applies the same from right to left as it does from left to right implies that this operation is in fact an involution.

The second operation we define is \emph{jeu-de-taquin}, or jdt for short, which takes as input a skew tableau $T$ and produces a straight tableau $\jdt(T)$ of the same size.  The output is sometimes called the \emph{rectification} of $T$.  Suppose $T$ is of shape $\nu / \mu$.  Fix an arbitrary standard Young tableau $S$ of shape $\mu$.  Consider a rectangular array of diamonds where the bottom left edge consists of the borders of $|\mu|$ diamonds while the top left edge touches $|\nu| - |\mu|$ diamonds.  Figure \ref{fig:jdt} illustrates the case when $|\mu|=3$ and $|\nu|=7$.   Place the chains corresponding to $S$ and $T$ along the bottom left and top left edges respectively.  Apply the growth rules from left to right to obtain a straight tableau $U$ on the bottom right edge and a skew tableau $V$ on the top right.

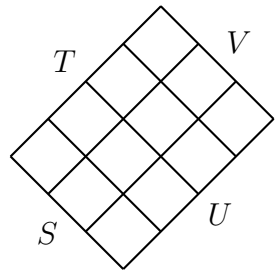
\begin{figure}
\psset{unit=.5cm}
\begin{pspicture}(7,7)
\multirput(3,0)(-1,1){4}{\psline(0,0)(4,4)}
\multirput(3,0)(1,1){5}{\psline(0,0)(-3,3)}
\uput[dl](1.5,1.5){$S$}
\uput[ul](2,5){$T$}
\uput[dr](5,2){$U$}
\uput[ur](5.5,5.5){$V$}
\end{pspicture}
\psset{unit=1cm}
\caption{A growth diagram that computes $\jdt(T)=U$}
\label{fig:jdt}
\end{figure}

\begin{prop}[Confluence of jdt]
The straight tableau $U$ depends only on $T$ and not on $S$.
\end{prop}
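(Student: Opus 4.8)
This is a classical fact (see \cite[Ch.~7, App.~1]{St}). The plan is to fix $T$ and show that $U$ is unchanged under any allowed choice of the auxiliary tableau $S$ of shape $\mu$. Two standard Young tableaux of shape $\mu$ are connected by a sequence of elementary moves -- each move replacing a single intermediate shape $\lambda^{(i)}$ of the associated chain by the unique other shape $\tilde\lambda^{(i)}$ with $\lambda^{(i-1)}\subset\tilde\lambda^{(i)}\subset\lambda^{(i+1)}$, whenever such a shape exists -- so it suffices to treat the case where $S$ and $S'$ differ by a single such move. (The connectivity is standard: since no box of $\mu$ lies immediately to the right of, or immediately below, a removable corner, elementary moves can be used to make the last box of the chain of $S$ any prescribed removable corner of $\mu$, after which one induces on $|\mu|$.) Note that every such move occurs at an \emph{interior} position of its chain, the endpoints $\emptyset,\mu$ being fixed.

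First I would set up coordinates on the array of diamonds so that the growth rules are applied one ``row'' at a time, from the row carrying $S$ up to the row carrying $V$. Each row then carries a saturated chain of shapes whose leftmost vertex is an entry of the chain for $U$ and whose rightmost vertex is an entry of the chain for $T$, and each new row is produced from the previous one together with one extra box read off from $T$; concretely, passing to the next row is a single jeu-de-taquin slide, the new box being slid inward through the chain while each diamond either advances it one step or leaves the corresponding entry fixed.

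The crux is a \emph{one-row lemma}: if two saturated chains with the same endpoints differ by a single elementary move at an interior position, then the two chains obtained one row higher -- using the same extra box from $T$ -- differ by at most one elementary move, again at an interior position (or they coincide). Granting this, an induction on rows shows that replacing $S$ by $S'$ perturbs every row of the growth diagram by at most a single interior move, hence never changes a leftmost vertex, hence does not change $U$. I expect the one-row lemma to be the main obstacle: it has to be checked by a direct local analysis that tracks the vertex altered by the move as it ripples leftward along the new row and confirms that the disturbance dies out before it can reach the left end -- equivalently, that a jeu-de-taquin slide commutes suitably with the elementary move. The case analysis is finite (only the two boxes swapped by the move and the one box supplied by $T$ are involved) but delicate, and the reversibility of the growth rule -- it reads the same left to right as right to left -- is the structural feature that forces the necessary cancellations.
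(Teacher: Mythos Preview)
The paper does not prove this proposition; it is simply stated as a classical fact (the surrounding text points to \cite[Chapter 7, Appendix 1]{St}, and later the semistandard version is deduced and then referred back to \cite{Sch}). So there is no argument in the paper to compare against.

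Your outline is the standard proof, translated into the growth-diagram formalism the paper uses: reduce to $S$ and $S'$ differing by a single elementary move, then propagate the perturbation row by row and show it never reaches the $U$-boundary. The reduction is sound, and the one-row lemma is correctly identified as the crux.

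Two small remarks. First, a harmless orientation slip: in the paper's Figure~\ref{fig:jdt} the $T$-edge is upper-left and the $U$-edge lower-right, so along one of your ``rows'' the $T$-vertex is leftmost and the $U$-vertex rightmost, the reverse of what you wrote. Second, be explicit that the one-row lemma allows the perturbed position to move from $j$ to $j-1$; over successive rows the perturbation can therefore drift toward the $U$-endpoint, and the case $j=1$ must be handled separately to see that position~$0$ is never altered. Your phrase ``dies out before it can reach the left end'' slightly obscures this: within a single row the disturbance need not die out at all---it may persist at position $j-1$---and the real content is that it cannot step onto position~$0$. That boundary case is exactly the finite local analysis you anticipate, and it is equivalent to the classical statement that jeu-de-taquin slides into two adjacent inner corners commute.
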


In light of the above one can define $\jdt(T) = U$.  Note that each Southwest to Northeast diagonal in the growth diagram encodes a standard Young tableau, the first being $T$ and the last being $U$.  These are precisely the intermediate tableaux that arise when defining jdt in the more standard way as a sequence of inward slides.  The input $S$ encodes the order in which the slides are performed.  Conversely, the portion $V$ of the output encodes the sequence of outward slides that can be applied to $U$ in order to reconstruct $T$.

\subsection{Semistandard Young tableaux} \label{sec:semistandard}
A \emph{semistandard Young tableau} of shape $\lambda$ is a filling of the Young diagram of $\lambda$ that weakly increases along rows and strictly increases down columns.  Any given positive integer is allowed to appear multiple times, or not at all.  Semistandard tableaux of skew shape are defined the same way.

The \emph{Bender-Knuth moves} $t_1,t_2,\ldots$ are certain operations on the set of semistandard Young tableaux of a given shape.  Let $T$ be such a tableau and let $S = T|_{[i,i+1]}$, the skew tableau obtained by taking only the boxes of $T$ with entry equal to $i$ or $i+1$.  Each row of $S$, read left to right, consists of
\begin{enumerate}
\item $a$ entries equal to $i$ that lie directly above an $i+1$,
\item $b$ entries equal to $i$ that are alone in their columns,
\item $c$ entries equal to $i+1$ that are alone in their columns, and finally
\item $d$ entries equal to $i+1$ that lie directly below an $i$
\end{enumerate}
for some $a,b,c,d \geq 0$.  Define a tableau $S'$ by switching the $b$ and $c$ values in each row of $S$ and note the semistandard conditions are still satisfied.  By definition, $t_i(T)$ is the result of replacing $S$ with $S'$ inside of $T$.

As an example, a possible $S$ used in the calculation of $t_3$ of some larger tableau $T$ could be
\begin{displaymath}
S = \tableau[sY]{   \bl & \bl & \bl & \bl & \bl & \bl & 3  & 3 & 4
             \\ \bl & \bl & 3   & 3   & 4   & 4   & 4
							\\ 3   & 3   & 4  
}
\end{displaymath}						
which would transform to 
\begin{displaymath}
S' = \tableau[sY]{   \bl & \bl & \bl & \bl & \bl & \bl & 3  & 3 & 4
             \\ \bl & \bl & 3   & 3   & 3   & 4   & 4
							\\ 4   & 4   & 4  
}.
\end{displaymath}
In the first row, $b=c=1$ so there is no change.  In the second row $b=1$ and $c=2$, so the first $4$ in the row is decremented in $S'$ to obtain $b=2$, $c=1$.  In the last row $b=2$ and $c=0$ so both $3$'s are incremented.

Let $\delta_m$ denote the \emph{staircase shape} partition $\delta_m = (m,m-1,m-2,\ldots, 1)$.  The skew shape $\delta_m / \delta_{m-1}$ consists of $m$ boxes no two of which are in the same row or column, so arbitrary fillings by positive integers result in semistandard Young tableaux of this shape.  Let $W_n$ denote the set of words on the alphabet $\{1,2,\ldots, n\}$.  Given $w=w_1w_2\cdots w_m \in W_n$ of length $m$, let $T(w)$ denote the semistandard Young tableau of shape $\delta_m / \delta_{m-1}$ with $w_1$ in the bottom-left box, $w_2$ in the next box up, and so on.  

The following is a well known connection between jdt and RSK. 

\begin{prop}\cite[Corollay A1.2.6]{St} \label{prop:rskjdt}
The RSK insertion tableau of a word $w$ is the result of jdt on $T(w)$.
\end{prop}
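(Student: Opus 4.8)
The plan is to deduce the statement from two standard facts about Knuth (plactic) equivalence of words: first, that a single jeu-de-taquin slide alters the reading word of a skew tableau only by Knuth transformations, so that $\jdt(T)$ and $T$ always have Knuth-equivalent reading words; and second, that for any word $v$ the RSK insertion tableau $P(v)$ is the \emph{unique} straight-shape tableau whose reading word (rows read left to right, from the bottom row up) is Knuth-equivalent to $v$. Both may be quoted from Stanley or Fulton. Granting them, the argument is short.

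First I would check that the reading word of $T(w)$ is $w$ itself. The skew shape $\delta_m/\delta_{m-1}$ is an anti-diagonal strip in which every box lies in its own row and its own column; reading the one-box rows from the bottom up therefore simply lists the entries $w_1, w_2, \ldots, w_m$ in order, by the very way $T(w)$ was defined (``$w_1$ in the bottom-left box, $w_2$ in the next box up, and so on''). Consequently $\jdt(T(w))$ is a straight-shape tableau whose reading word is Knuth-equivalent to $w$, and by the uniqueness statement it must equal $P(w)$. That is exactly the claim.

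The one point requiring care — and the place where the formalism of this paper differs from the textbook treatment — is the first fact. In the local-growth-rule presentation used here, a jeu-de-taquin slide amounts to propagating a change through one diagonal of diamonds, and one must verify that passing a single diamond either leaves the reading word unchanged or applies one elementary Knuth move; this reduces to a finite case analysis of the growth rule on three-box configurations. This is where I would expect to spend the most effort. Alternatively — and probably more efficiently — one can invoke the identification, noted just after the confluence proposition, of the growth-diagram $\jdt$ with the classical sequence-of-inward-slides $\jdt$, and then quote the slide version of the fact directly.

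A second, more self-contained route avoids Knuth equivalence altogether: induct on $m = |w|$, using confluence to compute $\jdt(T(w))$ by first performing the slides that rectify the sub-anti-diagonal carrying $w_1, \ldots, w_{m-1}$ — this yields $P(w_1 \cdots w_{m-1})$ by the inductive hypothesis, with the box carrying $w_m$ left undisturbed — and then sliding $w_m$ in. The whole content then collapses to the lemma that a single jeu-de-taquin slide bringing an isolated box with entry $x$ into a straight tableau $P$ reproduces the RSK row-insertion $P \leftarrow x$, which one proves by matching the bumping path to the slide path. Either route works; I would present the Knuth-equivalence argument as the main one, since it isolates the single nontrivial ingredient.
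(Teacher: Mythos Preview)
The paper does not actually prove this proposition: it is stated with a citation to Stanley \cite[Corollary A1.2.6]{St} and no argument is given. So there is nothing to compare against at the level of proof strategy.

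That said, both routes you sketch are correct and standard. The Knuth-equivalence argument is essentially how Stanley and Fulton organize the material (jdt preserves the plactic class, and the insertion tableau is the unique straight-shape representative), so your ``main'' route matches the cited reference. Your observation that the reading word of $T(w)$ is $w$ itself is exactly right and is the only shape-specific input needed. Your alternative inductive argument, reducing to the lemma that sliding a lone outer box into a straight tableau reproduces a single row-insertion, is also a well-known and perfectly valid proof; it is arguably closer in spirit to the growth-diagram formalism used elsewhere in this paper, since that lemma is precisely what a single column of diamonds in $\Rect_{1,j}$ computes. Either would be acceptable here, but since the authors are content to cite the result, a one-line reference would suffice.
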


\subsection{Crystal operators}
We will need a few results from crystal theory for some of the proofs; we review these here. First recall the definition of crystal operators. For each $1\leqslant i < n$ we can define $f_i:W_n\to W_n$ as follows. Ignore all the letters besides $i$ and $i+1$. Label every $i$ by a closing parenthesis and every $i+1$ by an open parenthesis. Now match the parentheses in the standard fashion, starting with the innermost ones. Now change the $i$ corresponding to the last unmatched closing parenthesis, if there are any, to an $i+1$ (if there aren't any, do nothing).

\begin{ex} The process for applying $f_2$  to the word $12442313423211243\in W_4$ is illustrated below. The colors show the parenthesis matching, and the boxed parenthesis is the one whose corresponding entry changes. 
$$\begin{array}{ccccccccccccccccc}
1&2&4&4&2&3&1&3&4&2&3&2&1&1&2&4&3\\
 &)& & &\boxed{)}&\textcolor{green}{(}& &\textcolor{red}{(}& &\textcolor{red}{)}&\textcolor{blue}{(}&\textcolor{blue}{)}& & &\textcolor{green}{)}& &(
\end{array}$$
We conclude that $f_2(12442313423211243) = 12443313423211243$.
\end{ex} 

Now we can define (via reading words) the operators $f_i$ on semistandard skew-tableaux of a given shape $\lambda/\mu$. Take a reading word of the tableau (for concreteness, we read each row, starting with the bottom one, from left to right). Applying $f_i$ to the reading word changes a certain $i$ to an $i+1$; we define the value of $f_i$ on the tableau by switching the corresponding $i$ in the tableau to an $i+1$. While not entirely obvious, it is true that the resulting tableau is semistandard.

\begin{ex}
Using our work from the previous example, we conclude that
\[f_2\left(\ 
\tableau[sY]{ \bl & \bl & \bl & \bl & \bl & \bl & \bl & \bl & 3
             \\ \bl & \bl & \bl & \bl & \bl & 1 & 1 & 2 & 4
             \\ \bl & \bl & \bl & \bl & \bl & 2
             \\ \bl & \bl & \bl & \bl & 2 & 3
             \\ \bl & \bl & \bl & 1   & 3 & 4
             \\ \bl & \bl & 2   & 3
             \\ 1   & 2   & 4   & 4}
\ \right) = 
\tableau[sY]{ \bl & \bl & \bl & \bl & \bl & \bl & \bl & \bl & 3
             \\ \bl & \bl & \bl & \bl & \bl & 1 & 1 & 2 & 4
             \\ \bl & \bl & \bl & \bl & \bl & 2
             \\ \bl & \bl & \bl & \bl & 2 & 3
             \\ \bl & \bl & \bl & 1   & 3 & 4
             \\ \bl & \bl & 3   & 3
             \\ 1   & 2   & 4   & 4}.
\]
\end{ex} 

Now we can recall the necessary results about crystal operators.

\begin{prop}[{e.g. \cite[Theorem 3.3.1]{vl}}]
\label{prop:crystal commute with jdt}
Crystal operators commute with jdt slides.
\end{prop}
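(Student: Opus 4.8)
The plan is to reduce the statement to a single jeu-de-taquin slide and then to a local analysis of how a slide perturbs the reading word.

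First, since rectification is a composition of individual slides, it suffices to treat one slide: if $T \rightsquigarrow T'$ denotes the slide through a fixed inner corner $c$, then (noting $f_i$ does not change the shape, so $c$ is still an inner corner of $f_i(T)$) we must show that $f_i(T) \rightsquigarrow f_i(T')$ is again the slide through $c$. Because $f_i$ on skew tableaux is defined through the reading word, everything comes down to tracking the reading word under the slide. Decompose the slide into elementary hole-moves. A horizontal hole-move, in which the hole exchanges with its right neighbor, leaves the reading word completely unchanged, since the moved entry stays in its row and the hole is never read. A vertical hole-move exchanges the hole with the entry $y$ immediately below it, lifting $y$ into the row above; in the reading word (rows bottom-to-top) this slides the single letter $y$ past the letters lying strictly between its old and new positions. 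Semistandardness of $T$ forces each such passage to be an elementary Knuth transformation, so one slide turns the reading word of $T$ into a Knuth-equivalent word by a prescribed finite sequence of such moves.

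The heart of the argument is then to check that $f_i$ is insensitive to each Knuth transformation that actually arises in a slide. Fix such a move, say $y$ hopping past an adjacent pair $u,v$, where the tableau conditions impose $u\le v$ and $u < y \le v$ (or the mirrored inequalities of the other Knuth relation). One examines which of $u,v,y$ lie in $\{i,i+1\}$. In the vast majority of cases the subword of $i$'s and $(i+1)$'s is literally unchanged, so the parenthesis matching and the unmatched closing parenthesis that $f_i$ flips are identical before and after. The only delicate cases are those in which two of $u,v,y$ lie in $\{i,i+1\}$; there the restricted inequalities pin down the subword change to the replacement of a locally matched pair sitting next to an unmatched closing symbol (or its mirror) by an equivalent one, and a short bracket-matching check shows this does not move the position of the last unmatched closing parenthesis, hence does not change $f_i$. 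Assembling these local comparisons along the slide path yields $f_i \circ (\text{slide}) = (\text{slide}) \circ f_i$.

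The step I expect to be the main obstacle is precisely this case analysis: keeping the bookkeeping of the parenthesis matching correct as the reading word is repeatedly perturbed, and confirming that the Knuth moves which look like they might disturb $f_i$ either cannot occur inside an actual slide or else cancel out. A cleaner but slightly less general route, which at least settles the rectification statement (the form in which the proposition is applied via Proposition~\ref{prop:rskjdt}), is available: it is classical that rectification preserves Knuth equivalence and that $f_i$ descends to a well-defined operator on the plactic monoid; since a straight-shape tableau is the unique straight-shape representative of its Knuth class, one gets $\Rect(f_i(T)) = f_i(\Rect(T))$ immediately, and one can then bootstrap from rectification back to individual slides if the full statement is needed.
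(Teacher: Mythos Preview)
The paper does not prove this proposition; it is quoted from the literature (van Leeuwen) without argument, so there is no paper proof to compare against.

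Your main route has both a concrete error and a structural gap. The claim that a vertical hole-move moves $y$ past its neighbours one at a time by elementary Knuth transformations already fails for the two-row configuration with lower row $y\,z_1\,z_2$ and upper row $\square\,w_1\,w_2$ (hole $\square$, slide condition $y\le w_1$): the reading word goes from $y\,z_1\,z_2\,w_1\,w_2$ to $z_1\,z_2\,y\,w_1\,w_2$, but the first of your transpositions, $y\,z_1\to z_1\,y$, would require $y\le z_2<z_1$, contradicting row-weak-increase $z_1\le z_2$; numerically, $13423\to 31423$ is not an elementary Knuth move, although the two words are plactic-equivalent via a longer chain. More seriously, even if $f_i$ commuted with every elementary Knuth move in some correct decomposition, that would only yield plactic equivalence between the reading word of $f_i(T')$ and that of the slide of $f_i(T)$ through $c$, not equality: the slide path on $f_i(T)$ is dictated by comparisons among entries of $f_i(T)$, and the single box where $f_i$ turned an $i$ into $i{+}1$ can flip such a comparison and reroute the hole to a different outer box. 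Your bootstrap from rectification to single slides hits the same wall, since a skew tableau is not determined by its shape together with its rectification --- and the paper's applications (e.g.\ in the proof of Proposition~\ref{prop:qij_local}) genuinely need commutation with individual outward slides, not merely with full rectification.

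The standard argument avoids the reading-word detour altogether: it is a direct local analysis on the tableau, examining the handful of cells where the slide path meets the $i$'s and $(i+1)$'s and checking in each configuration that performing the slide and applying $f_i$ in either order gives the same filling. That tableau-level case analysis is where the content lives, and it is different in kind from the word-level bracket-matching analysis you outline.
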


\begin{prop}[{follows from \cite[Corollary 3.3.3]{vl}}]
\label{prop:crystal connected}
Any two tableaux of the same straight shape are connected by crystal operators.
\end{prop}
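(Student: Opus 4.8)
The plan is to prove the statement in the form: every semistandard Young tableau of a fixed straight shape $\lambda$ with entries in $[n]$ is joined, by crystal operators, to one distinguished tableau; any two are then connected through it. Write $e_i$ for the partial inverse of $f_i$, which changes a suitable $i+1$ into an $i$, and begin with the weight bookkeeping: applying $f_i$ replaces an entry $i$ by $i+1$, so it strictly decreases the content vector $(\#1\text{'s},\#2\text{'s},\ldots)$ in dominance order, while $e_i$ strictly increases it. Since there are only finitely many semistandard tableaux of shape $\lambda$ with entries in $[n]$, starting from an arbitrary such $T$ and repeatedly applying whichever $e_i$ happens to be defined, the process cannot cycle and must terminate at a tableau $T^{*}$ killed by every $e_i$. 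Hence $T$ lies in the same connected component of the crystal graph as some such highest weight tableau $T^{*}$, and it remains only to identify all the possible $T^{*}$.

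Next I would show $T^{*}$ is forced to be the superstandard tableau $U_\lambda$ whose $k$-th row consists entirely of $k$'s. Unwinding the parenthesis matching in the definition of $f_i$, the condition $e_i T = 0$ for all $i$ is equivalent to the row-reading word of $T$ being a reverse lattice word, i.e. every suffix contains at least as many $j$'s as $(j{+}1)$'s, for every $j$. Column-strictness already forces every $1$ of $T$ into the first row; and if the first row contained any entry $>1$, then its rightmost box — which is the final letter of the reading word — would violate the reverse-lattice condition on the one-letter suffix. Thus the first row of $T^{*}$ is $1^{\lambda_1}$. Deleting this row and decrementing every remaining entry produces a semistandard tableau of straight shape $(\lambda_2,\lambda_3,\ldots)$ whose reading word is obtained from that of $T^{*}$ by deleting the terminal block $1^{\lambda_1}$, and which is again a reverse lattice word (pad any suffix by $1^{\lambda_1}$); so induction on the number of rows gives $T^{*}=U_\lambda$. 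Finally one checks directly that $e_i U_\lambda = 0$: the $\{i,i{+}1\}$-subword of the reading word of $U_\lambda$ is $(i{+}1)^{\lambda_{i+1}} i^{\lambda_i}$ with $\lambda_i \geq \lambda_{i+1}$, so every open parenthesis is matched.

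The only real friction here is bookkeeping: one must carefully match the parenthesization convention used to define $f_i$ with the direction (suffix versus prefix) of the lattice-word condition, and check that deleting the first all-$1$'s row genuinely preserves the reverse-lattice property for the shifted alphabet; everything else is finiteness together with the induction. In any case this is exactly \cite[Corollary 3.3.3]{vl}, which is why we simply quote it.
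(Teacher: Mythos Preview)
The paper does not actually prove this proposition; it merely records it as a consequence of \cite[Corollary~3.3.3]{vl}. Your argument is correct and is precisely the standard proof of that corollary: raise an arbitrary tableau by $e_i$'s until reaching a highest weight element, then show the unique highest weight tableau of straight shape $\lambda$ is the superstandard one $U_\lambda$. The details (termination via dominance on content, the Yamanouchi/reverse-lattice characterization of $e_i$-annihilation under the paper's parenthesis convention, and the row-by-row induction) are all handled correctly, so your write-up is strictly more informative than what the paper supplies.
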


\begin{prop}[{\cite[Theorem 5.5.1]{LLT}}]
\label{prop:crystal preserve Q}
Crystal operators on words preserve the RSK recording tableau.
\end{prop}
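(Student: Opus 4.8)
The plan is to reduce the statement to a claim about the shapes of insertion tableaux of \emph{prefixes}. Recall that for $w = w_1 \cdots w_m$ the recording tableau $Q$ of $\RSK(w)$ is encoded by the chain $\emptyset = \lambda^{(0)} \subset \lambda^{(1)} \subset \cdots \subset \lambda^{(m)}$ with $\lambda^{(k)} = \operatorname{sh}(\RSK(w_1 \cdots w_k))$, since the box with entry $k$ in $Q$ records the shape change caused by inserting $w_k$. Hence, to show that $f_i$ preserves $Q$, it suffices to prove that $\operatorname{sh}(\RSK((f_iw)_1 \cdots (f_iw)_k)) = \lambda^{(k)}$ for every $k$. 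I will treat the lowering operators $f_i$; since $e_i$ and $f_i$ are mutually inverse, the statement for $f_i$ immediately yields it for $e_i$.

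The first ingredient is that $f_i$ preserves the shape of the insertion tableau of an arbitrary word $u$. The reading word of the ribbon $T(u)$ of shape $\delta_m / \delta_{m-1}$ is exactly $u$ (its boxes occupy distinct rows and are read bottom to top as $u_1, u_2, \ldots, u_m$), so applying $f_i$ to the tableau $T(u)$ produces $T(f_i u)$. By Proposition \ref{prop:crystal commute with jdt} crystal operators commute with jdt, and by Proposition \ref{prop:rskjdt} the rectification $\jdt(T(u))$ is the insertion tableau of $u$; therefore $\jdt(T(f_iu)) = f_i(\jdt(T(u)))$, i.e. the insertion tableau of $f_i u$ is obtained from that of $u$ by a single crystal move $i \mapsto i+1$. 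Such a move alters one entry without changing the set of boxes, so the shape is preserved.

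The remaining and most delicate point is that $f_i$ is compatible with passing to prefixes. Suppose $f_i$ changes the letter of $w$ in position $p$. The key observation is that the bracketing rule defining $f_i$ is \emph{causal}: whether the letter in position $q$ is matched depends only on $w_1, \ldots, w_q$, because a closing parenthesis (an $i$) can only be paired with an earlier opening one (an $i+1$). Consequently, for every $k \geq p$, each position $\leq k$ has the same matched/unmatched status in the prefix $w_1 \cdots w_k$ as in $w$; since $p$ is the rightmost unmatched $i$ of $w$, no unmatched $i$ occurs in positions $p+1, \ldots, k$, so $p$ remains the rightmost unmatched $i$ of the prefix and $f_i(w_1 \cdots w_k) = (f_iw)_1 \cdots (f_iw)_k$. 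For $k < p$ the two prefixes simply coincide.

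Combining these, for every $k$ the prefix $(f_iw)_1 \cdots (f_iw)_k$ equals $w_1 \cdots w_k$ (when $k < p$) or $f_i(w_1 \cdots w_k)$ (when $k \geq p$); in either case its insertion tableau has shape $\lambda^{(k)}$ by the shape-preservation step. Hence the shape chains of $w$ and $f_iw$ agree and $Q$ is preserved. I expect the genuine obstacle to be the third paragraph: $f_i$ is defined by a \emph{global} bracket matching while $Q$ is assembled from all prefixes simultaneously, so the whole argument hinges on verifying that the matching is stable under truncation and that the position being modified does not depend on where the word is cut.
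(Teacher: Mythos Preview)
The paper does not supply its own proof of this proposition; it is simply cited from \cite{LLT}. So there is nothing to compare your argument against, and the relevant question is only whether your proof is correct. It is.

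One small imprecision worth tightening: you write that ``each position $\le k$ has the same matched/unmatched status in the prefix $w_1\cdots w_k$ as in $w$.'' That is literally false for open parentheses (the $i{+}1$'s), which may be matched in $w$ by an $i$ sitting to the right of position $k$ and hence unmatched in the prefix. What \emph{is} true, and what your argument actually uses, is that the matched/unmatched status of each closing parenthesis (each $i$) at position $q$ depends only on $w_1,\ldots,w_q$. That suffices to conclude that for $k\ge p$ the rightmost unmatched $i$ in $w_1\cdots w_k$ is still at position $p$, hence $f_i(w_1\cdots w_k)=(f_iw)_1\cdots(f_iw)_k$. With that clarification the prefix argument is sound, and combined with your shape-preservation step (via Propositions~\ref{prop:crystal commute with jdt} and~\ref{prop:rskjdt}) it gives a clean self-contained proof.
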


\begin{prop}[{follows from \cite[Corollary 3.3.3]{vl}}]
\label{prop:haiman-like result}
Suppose $T$ and $T'$ are semistandard Young tableaux of the same shape, that they are connected by crystal operators, and that $\jdt(T)=\jdt(T')$.  Then $T=T'$.   
\end{prop}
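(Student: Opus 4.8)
The plan is to reduce the statement to the $\RSK$ correspondence via reading words. Write $w = w(T)$ and $w' = w(T')$ for the reading words of $T$ and $T'$ (rows bottom-to-top, each read left to right, as above). Because the crystal operators on tableaux are \emph{defined} by applying the corresponding operators to the reading word, the hypothesis that $T$ and $T'$ are connected by crystal operators says precisely that $w'$ is obtained from $w$ by a sequence of crystal operators on words.

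From this I would extract two facts. First, Proposition~\ref{prop:crystal preserve Q} (crystal operators on words preserve the recording tableau) gives $Q(w) = Q(w')$. Second, the rectification of a skew tableau equals the $\RSK$ insertion tableau of its reading word; this is the skew generalization of Proposition~\ref{prop:rskjdt} and is classical (see \cite{St}), so $P(w) = \jdt(T)$ and $P(w') = \jdt(T')$, and the hypothesis $\jdt(T) = \jdt(T')$ then forces $P(w) = P(w')$. Hence the pairs $(P(w),Q(w))$ and $(P(w'),Q(w'))$ agree, and since $\RSK$ is a bijection we get $w = w'$. Finally, a semistandard skew tableau of a fixed shape is recovered from its reading word by placing the letters back into the boxes, so $w = w'$ together with $\operatorname{sh}(T) = \operatorname{sh}(T')$ forces $T = T'$.

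The only step not already spelled out in the excerpt — and the one I expect to require the most care — is the identity $\jdt(T) = P(w(T))$ for an arbitrary skew tableau $T$: Proposition~\ref{prop:rskjdt} is exactly the case $T = T(w)$, and the general case follows from it using the standard facts that a single jeu-de-taquin slide leaves the reading word in the same plactic class and that the reading word of a straight tableau $\RSK$-inserts back to that tableau. (One could instead route the argument through Proposition~\ref{prop:crystal commute with jdt}, writing $T' = g(T)$ for a composite crystal operator $g$ and noting $g(\jdt(T)) = \jdt(T') = \jdt(T)$, but transporting this equality back to $T$ and $T'$ still needs the same input, so the reading-word route is cleanest.) Everything else is bookkeeping; note the crystal hypothesis enters only through $Q(w)=Q(w')$, and it is genuinely needed, since two skew tableaux of the same shape in \emph{different} crystal components can rectify to the same straight tableau — exactly a Littlewood--Richardson coefficient exceeding $1$ — and for such a pair the conclusion fails.
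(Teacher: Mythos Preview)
Your argument is correct. The paper does not supply its own proof of this proposition; it simply records that the statement follows from \cite[Corollary~3.3.3]{vl}. That corollary (together with Theorem~3.3.1 of the same reference, cited here as Proposition~\ref{prop:crystal commute with jdt}) says that jeu-de-taquin restricted to a crystal connected component of skew tableaux of a fixed shape is a crystal isomorphism onto the crystal of straight-shape tableaux; in particular it is injective on each component, which is exactly the assertion.

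Your route is genuinely different: rather than invoking the crystal-isomorphism packaging, you pass to reading words and use the bijectivity of $\RSK$, extracting $Q(w)=Q(w')$ from Proposition~\ref{prop:crystal preserve Q} and $P(w)=P(w')$ from the skew version of Proposition~\ref{prop:rskjdt}. This has the virtue of being self-contained modulo facts already stated in the paper, plus the classical identity $\jdt(T)=P(w(T))$ for skew $T$, which you correctly flag as the one step needing outside input. The paper's citation approach is shorter but leans on a stronger structural statement; your approach trades that for the (equally standard, and arguably more familiar) plactic/Knuth machinery behind $\jdt(T)=P(w(T))$. Either way the content is the same: jdt is injective on crystal components of a fixed skew shape.
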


 %

\section{Semistandard growth diagrams}
In this section we define a generalized notion of growth diagram that operates on semistandard tableaux rather than just standard ones.  It is convenient to introduce more notation in the standard case before generalizing.

Consider an $(n+1)$-row checkerboard lattice 
\begin{displaymath}
V = \{(i,j) \in \mathbb{Z}^2 : i+j \textrm{ even}, 0 \leq j \leq n\}
\end{displaymath}
with rows numbered $0$, $1$, \ldots, $n$ from bottom to top and extending infinitely to the left and right.  
We will consider certain assignments of a partition $\operatorname{sh}(v)$ to each $v \in V$ satisfying
\begin{enumerate}
\item $\operatorname{sh}(v) = \emptyset$ (the empty shape) for all $v$ on row $0$
\item if $u,v \in V$ with $v = u + (-1,1)$ or $v= u + (1,1)$ then $\operatorname{sh}(v)$ equals $\operatorname{sh}(u)$ with one box added.  In this case say $v$ is a Northwest step or Northeast step, respectively, from $u$.
\end{enumerate}

By a \emph{path} we mean a path consisting entirely of Northwest and Northeast steps.  A path $v_0v_1\cdots v_n$ gives rise to a saturated chain of shapes $\operatorname{sh}(v_0) \subset \operatorname{sh}(v_1) \subset \ldots \subset \operatorname{sh}(v_n)$ which in turn corresponds to a standard Young tableau of shape $\operatorname{sh}(v_n)$.  We write $[v_0\cdots v_n]$ to denote this tableau.  More generally, a path $v_iv_{i+1}\cdots v_j$ from row $i$ to row $j>i$ corresponds to a standard Young tableau of skew shape $\operatorname{sh}(v_j) / \operatorname{sh}(v_i)$ which we denote $[v_i\cdots v_j]$.  Lastly, if $v = u + (-k,k)$ or $v = u + (k,k)$ then write $uv$ for the unique path from $u$ to $v$ and $[uv]$ for the corresponding tableau.

A \emph{primitive diamond} is one whose endpoints $u,v,v',w \in V$ are arranged as $v = u+(-1,1)$, $v' = u + (1,1)$, and $w = v + (1,1) = v'+(-1,1)$.  A \emph{growth diagram} is a (typically connected) finite collection of primitive diamonds.  A growth diagram defines a map on standard Young tableaux as follows.  The input is the tableau $[v_0 \cdots v_n]$ associated to some path entirely left of the diagram.  Shapes are constructed recursively from left to right.  If $v'$ is the right vertex of a primitive diamond belonging to the diagram, then $\operatorname{sh}(v')$ is chosen according to the growth rule.  Otherwise, $\operatorname{sh}(v') = \operatorname{sh}(v)$ where $v = v' - (2,0)$.  The output is the tableau $[v_0' \cdots v_n']$ associated to any path lying entirely to the right of the diagram.

In this language, the growth diagram that computes Sch\"utzenberger involution consists of those diamonds lying right of the path $[ac]$ and left of the path $[bc]$ where $a=(0,0)$, $b=(2i,0)$, and $c = (i,i)$.  Denote this growth diagram $\Evac_i$, so for instance the growth diagram in Figure \ref{fig:evac} is $\Evac_4$.  For any filling of the growth diagram we have $[bc] = \Evac([ac])$.  Note we allow that $i < n$, where $n$ is the height of the lattice.  In this case, $\Evac_i$ acts on standard Young tableaux of size $n$, but by our convention it only acts non-trivially on the part of the tableau with entries $\leq i$.  

A jeu-de-taquin growth diagrams consists of diamonds right of the path $abd$ and left of the path $acd$ where $a=(0,0)$, $b=(-i,i)$, $c=(j,j)$, and $d=(-i+j,i+j)$.  These diamonds form a rectangle, so we denote the diagram $\Rect_{i,j}$.  More precisely, $\Rect_{i,j}$ computes jeu-de-taquin on skew tableaux of shape $\lambda / \mu$ where $|\mu| = i$ and $|\lambda| = i+j$.  As already discussed, any filling of the diagram will have the property that $[ac] = \jdt([bd])$.  The growth diagram $\Rect_{3,4}$ is pictured in Figure \ref{fig:jdt}.

Note that shifting an entire growth diagram left or right does not affect its functionality, so we consider the above diagrams to be defined modulo such shifts.

\subsection{Definition via Bender-Knuth moves} \label{sec:BK}
Let $S$ be a standard Young tableau.  Applied to $S$, the Bender-Knuth move $t_i$ has the effect of interchanging the positions of the $i$ and $i+1$ if possible, and otherwise doing nothing.  Viewing $S$ as a chain of shapes, $\lambda^{(0)} \subset \lambda^{(1)} \subset \ldots \subset \lambda^{(n)}$, only the shape $\lambda^{(i)}$ changes.  Moreover, it changes exactly by the growth rule applied to a diamond starting from shapes $\lambda^{(i-1)}$, $\lambda^{(i)}$, and $\lambda^{(i+1)}$.

In light of the above, it is natural to define semi-standard growth diagrams using Bender-Knuth moves.  As before let $V = \{(i,j) \in \mathbb{Z}^2 : i+j \textrm{ even}, 0 \leq j \leq n\}$.  Assign a partition $\operatorname{sh}(v)$ to each $v \in V$ so that
\begin{enumerate}
\item $\operatorname{sh}(v) = \emptyset$ (the empty shape) for all $v$ on row $0$
\item if $v$ is a Northwest step or Northeast step from $u$ then $\operatorname{sh}(v)$ equals $\operatorname{sh}(u)$ with a (possibly empty) horizontal strip added.
\end{enumerate}
Note the second rule has been modified, so paths now correspond to semistandard Young tableau.  More precisely, a path $v_iv_{i+1}\cdots v_j$ from row $i$ to row $j>i$ gives rise to a semistandard tableau, denoted $[v_iv_{i+1}\cdots v_j]$ of shape $\operatorname{sh}(v_j)/\operatorname{sh}(v_i)$ with entries $m$ occupying the boxes of the horizontal strip $v_{i+m} / v_{i+m-1}$ for $m=1,\ldots, j-i$.

Suppose we are given a primitive diamond with shapes $\lambda$, $\mu$, and $\nu$ at the bottom, left, and top vertices respectively.  Then these shapes encode a semistandard tableau $T$ of skew shape $\nu / \lambda$ with entries all equal to $1$ and $2$.  Another tableau $T'$ of the same shape can be obtained by applying the Bender-Knuth move $t_1$.  The \emph{growth condition} requires that the shape $\mu'$ placed at the right of the diamond is such that the chain $\lambda \subseteq \mu' \subseteq \nu$ encodes $T'$.

Another take on the growth condition is as follows.  Let $v_0v_1\ldots v_n$ be a path and suppose the subpath $v_{i-1}v_iv_{i+1}$ is the left part of a primitive diamond (i.e. it is a Northwest step followed by a Northeast step).  Use the growth rule to determine $\operatorname{sh}(v_i')$ where $v_i'$ is the right vertex of the same diamond.  The effect of replacing $v_i$ with $v_i'$ in the path amounts to the Bender-Knuth move on tableaux
\begin{displaymath}
[v_0 v_1 \cdots v_i' \cdots v_n] = t_i([v_0 v_1 \cdots v_i \cdots v_n]).
\end{displaymath}

Exactly as before, a semistandard growth diagram is a finite collection of primitive diamonds on which the semistandard growth rule is imposed.  Such a diagram encodes a map on semistandard Young tableaux.  In light of the previous paragraph this map is a composition of Bender-Knuth moves $t_j$, one for each primitive diamond with center $(i,j)$, applied from left to right (collections of diamond moves in a common column always commute with each other). As such, a semistandard growth diagram is simply a visual representation of an element of the Berenstein-Kirillov group.

\subsection{Semistandard jeu-de-taquin and Sch\"utzenberger involution}
We can now view the growth diagrams $\Rect_{i,j}$ and $\Evac_j$ as acting on semistandard tableaux.  These actions in turn can be taken to give definitions for jeu-de-taquin and Sch\"utzenberger involution for semistandard tableaux. The next collection of results state that jeu-de-taquin still behaves in the usual fashion.

\begin{lem} \label{lem:semistandard_growth}
Suppose $\lambda, \mu, \mu', \nu$ are partitions at the four corners of a (semistandard) primitive growth diamond (see left of Figure \ref{fig:boxbybox}). Let $\eta\subset\mu$ be the unique partition such that $\mu/\eta$ is the top right cell of the horizontal strip $\mu/\nu$. Consider two primitive growth diamonds as shown on the right side of Figure \ref{fig:boxbybox}. Then $\eta'' = \mu'$ and $\lambda/\mu''$ is the top right cell of the horizontal strip $\lambda/\eta''$.
\end{lem}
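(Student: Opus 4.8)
The plan is to unfold both the left-hand (single diamond) and right-hand (two diamond) pictures into the explicit combinatorial recipe for Bender--Knuth moves on horizontal strips, and then compare the two outcomes entry by entry. A semistandard primitive diamond with corners $\nu\subseteq\mu\subseteq\lambda$ (bottom, left, top) and unknown right corner $\mu'$ is, by definition, a record of the identity $t_1(T)=T'$, where $T$ is the two-row semistandard tableau of shape $\lambda/\nu$ carrying $1$'s on the horizontal strip $\mu/\nu$ and $2$'s on $\lambda/\mu$, and $T'$ carries $1$'s on $\mu'/\nu$ and $2$'s on $\lambda/\mu'$. Using the standard fact that in each row of such a tableau the cells occur, left to right, in four consecutive blocks --- $1$'s above $2$'s, free $1$'s, free $2$'s, $2$'s below $1$'s --- and that $t_1$ merely interchanges the lengths of the two middle blocks, I get a closed formula for each $\mu'_k$ in terms of the neighbouring parts of $\nu$, $\mu$, $\lambda$.

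Before comparing I would isolate the features of the distinguished cell $b^{*}=(r,\mu_r)$, the top-right cell of the strip $\mu/\nu$ (equivalently, the cell in the largest column met by the strip). Because the horizontal-strip condition forces lower rows of $\mu/\nu$ to sit in strictly smaller columns, $r$ is the \emph{topmost} row meeting the strip, so all rows strictly above $r$ already agree with $\nu$. Two useful consequences: (i) $b^{*}$ is a removable corner of $\mu$, so $\eta=\mu\setminus\{b^{*}\}$ is a genuine partition and $\eta/\nu$ is again a horizontal strip --- this is the legitimacy check needed for the right-hand configuration, after which all remaining horizontal-strip conditions there follow automatically from the fact that $t_1$ and $t_2$ preserve semistandardness; and (ii) row $r$ of $T$ contains no "$2$ below a $1$", since row $r-1$ carries no $1$.

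For the comparison I would run the same dictionary on the right-hand picture, where $b^{*}$ has been promoted to a row of its own: one first applies a $t_2$ move to the three-row tableau with $1$'s on $\eta/\nu$, a single $2$ at $b^{*}$, and $3$'s on $\lambda/\mu$ --- this produces the intermediate shape $\mu''$ --- and then a $t_1$ move, producing $\eta''$. Since $b^{*}$ lies in the single row $r$ and is the rightmost $1$ of that row, both the $t_2$ step and the $t_1$ step act identically to the corresponding single-diamond computation in every row other than $r$ and $r+1$, so the whole comparison localizes to those two rows. There it becomes a short case analysis according to whether $b^{*}$ sits directly above a cell of $\lambda/\mu$ (so $b^{*}$ is of "type $a$" for the moves) or not. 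In each case one reads off that the $1$'s produced by the two-diamond picture occupy $\mu'/\nu$, i.e.\ $\eta''=\mu'$, and that the single cell of $\lambda/\mu''$ is exactly the top-right cell of the strip $\lambda/\eta''=\lambda/\mu'$, which is the second assertion.

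The step I expect to be the real obstacle is this localized row-$r$/row-$(r+1)$ comparison: it amounts to showing that detaching $b^{*}$ from the $1$-strip and letting it interact with the $2$-strip \emph{before} the rest of the strip does yields the same shapes as processing the whole strip in one move. This is precisely where the choice of the \emph{rightmost} cell of the strip is used: a cell further to the left could be shielded from, or displaced by, cells of $\mu/\nu$ to its right, and the box-by-box factorization would then break. Once the "type $a$" and "type $b$" local pictures are drawn out, the remaining verification is a direct, if slightly fussy, bookkeeping check.
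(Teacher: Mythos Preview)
Your approach is correct and essentially the same as the paper's. Both proofs unfold the Bender--Knuth moves into the explicit per-row ``four-block'' description and then reduce to a case analysis on whether the distinguished cell $b^{*}$ has a cell of $\lambda$ directly below it; the paper phrases the same dichotomy as ``whether the top right cell of $\mu/\nu$ has a cell of $\lambda$ directly below it'' and carries out one of the two cases using a pictorial circles/squares/triangles encoding in place of your coordinate bookkeeping, but the underlying verification is identical.
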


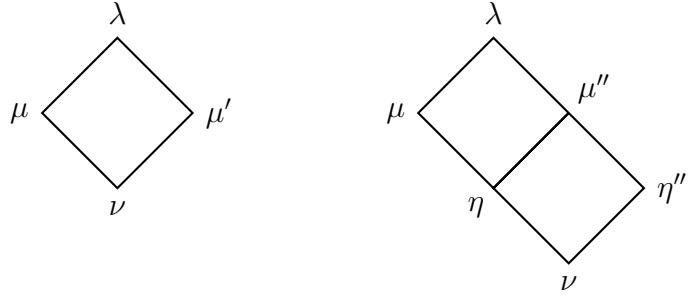
\begin{figure}
\begin{pspicture}(0,-.5)(8,3.5)
\pspolygon(1,1)(0,2)(1,3)(2,2)
\uput[d](1,1){$\nu$}
\uput[l](0,2){$\mu$}
\uput[r](2,2){$\mu'$}
\uput[u](1,3){$\lambda$}

\rput(5,0){
\pspolygon(1,1)(0,2)(1,3)(3,1)(2,0)(1,1)(2,2)
\uput[d](2,0){$\nu$}
\uput[dl](1,1){$\eta$}
\uput[r](3,1){$\eta''$}
\uput[l](0,2){$\mu$}
\uput[ur](2,2){$\mu''$}
\uput[u](1,3){$\lambda$}
}
\end{pspicture}
\caption{The shapes involved in Lemma \ref{lem:semistandard_growth}}
\label{fig:boxbybox}
\end{figure}

\begin{proof}
There are two cases, depending on whether the top right cell of $\mu/\nu$ (whose row we denote by $k$) has a cell of $\lambda$ directly below it. The analysis in the two cases is similar, so we only talk about the case when there is no cell of $\lambda$ directly below the top right cell of $\mu/\nu$. 

The situation is illustrated in Figure \ref{fig:boxbybox_pf}. On the top left $\nu$ is represented by the partition without signs in it, $\mu$ by that partition as well as the cells with the circles, and $\lambda$ by the cells with the circles and the cells with the squares. The bottom left similarly represents the triple $(\nu, \mu', \lambda)$; the step between the top and bottom is a Bender-Knuth move on the circles and squares. Finally, on the right we have the three quadruples $(\nu, \eta, \mu,\lambda)$, $(\nu, \eta, \mu'',\lambda)$, and $(\nu, \eta'', \mu'',\lambda)$. On the right, the first step is a Bender-Knuth move on the triangles and squares while the second step is a Bender-Knuth move on the circles and squares.

The statement that $\mu' = \eta''$ is precisely the statement that in the bottom left and the bottom right, the circles occupy the same positions. This is not difficult to see since the middle picture on the right only differs from the top picture on the left in row $k$, and one easily sees that the number of circles in row $k$ in the bottom picture is unaffected by the difference. The statement that $\lambda/\mu''$ is the top right cell of the horizontal strip $\lambda/\eta''$ means that the triangle in the bottom right picture is no lower than any of the squares. This is clear (using the assumptions that there were no circles above row $k$ in either of the top pictures). 

\begin{figure}
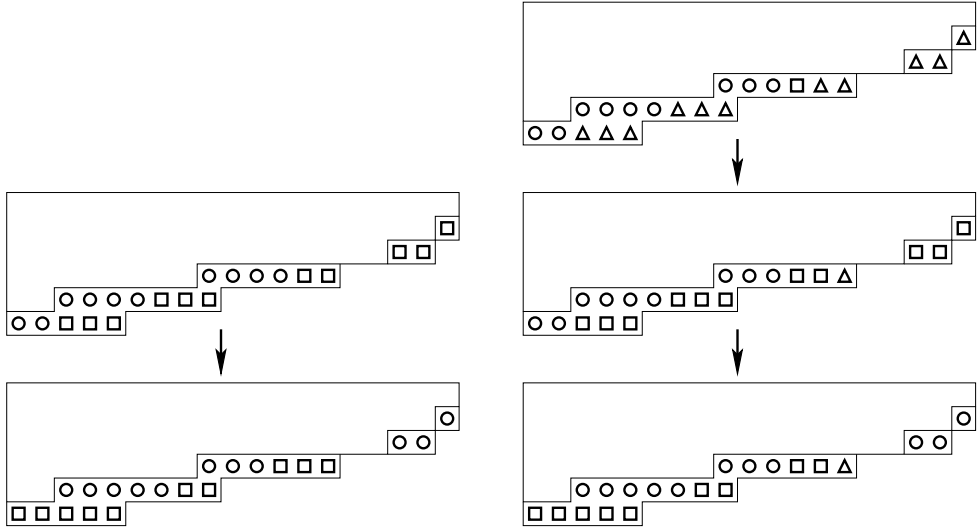

\begin{center}
\resizebox{.4\textwidth}{!}{\input{boxbybox1.pstex_t}}\qquad\resizebox{.4\textwidth}{!}{\input{boxbybox2.pstex_t}}
\end{center}
\caption{Decomposing a semistandard primitive diamond.}
\label{fig:boxbybox_pf}
\end{figure}

\end{proof}

\begin{prop}
Semistandard jeu-de-taquin is confluent. 
\end{prop}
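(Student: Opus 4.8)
Recall that, in the language of growth diagrams, the statement asserts that for the diagram $\Rect_{i,j}$ the straight tableau $[ac]$ read off the lower-right edge depends only on the skew input $T=[bd]$ on the upper-left edge, and not on the auxiliary straight tableau $S=[ab]$ on the lower-left edge. The plan is to deduce this from the already-available confluence of \emph{standard} jeu-de-taquin, by using Lemma~\ref{lem:semistandard_growth} to refine the semistandard diagram $\Rect_{i,j}$, together with the semistandard chains $S$ and $T$, into a standard one.

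The engine is the observation that Lemma~\ref{lem:semistandard_growth} is a ``box-peeling'' move: it replaces a single semistandard primitive diamond (left of Figure~\ref{fig:boxbybox}) by the two-diamond configuration on the right of that figure, in which the top-right cell of the horizontal strip $\mu/\nu$ has been split off as its own one-cell step, and it does so without changing the computed map (merging the two new steps on the right edge back into one recovers the original right edge $\nu\subset\mu'\subset\lambda$). Iterating this move, I would peel the cells off the horizontal strips one at a time, in a fixed order --- sweeping the columns of diamonds from left to right, and within each diamond always peeling the top-right cell of its upper strip first --- until every remaining strip has at most one cell; discarding the resulting trivial steps and diamonds, one is left with a genuine standard growth diagram $D$. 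By the choice of order the path along the left boundary of $D$ is exactly the standardization $\widehat{S}$ followed by the standardization $\widehat{T}$, and $D$ carries a skew tableau on its upper-left edge and a straight tableau on its lower-left edge, i.e.\ it is a standard growth diagram of rectification type.

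Now I would apply confluence of standard jeu-de-taquin to $D$: the straight tableau read along its lower-right edge is $\jdt(\widehat{T})$, which depends on $\widehat{T}$ alone, hence on $T$ alone. It remains to descend from the standardized picture. On one hand, $[ac]$ has the same standardization as the lower-right tableau of $D$, namely $\jdt(\widehat{T})$. On the other hand, the content of $[ac]$ equals the content of $T$: the composition of Bender--Knuth moves encoded by $\Rect_{i,j}$ only permutes the content vector (each $t_k$ transposing the numbers of $k$'s and $(k+1)$'s), and the net permutation carries the block of entries originating from $T$ onto the block read along $ac$. Since a semistandard tableau is recovered from its standardization together with its content, and both of these depend only on $T$, so does $[ac]$. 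This is exactly confluence.

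The main obstacle is the bookkeeping hidden in the refinement step: one must verify that the peelings dictated by Lemma~\ref{lem:semistandard_growth} in adjacent diamonds are compatible, so that iterating the lemma really yields a single well-defined standard growth diagram, that the outcome is independent of the order in which the peelings are performed, and that the left boundary of $D$ is the standardization as claimed. This is a local-confluence-type verification, most cleanly organised as an induction on the number of diamonds with Lemma~\ref{lem:semistandard_growth} supplying the inductive step; it is routine, but it is the one place where the semistandard hypothesis genuinely enters the argument.
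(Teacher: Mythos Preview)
Your plan to fully standardize the growth diagram and then invoke standard confluence is more ambitious than the paper's argument, and as written it has a gap. Lemma~\ref{lem:semistandard_growth} peels one box off the \emph{lower} strip $\mu/\nu$ of a diamond, not the upper strip $\lambda/\mu$ (your phrase ``top-right cell of its upper strip'' suggests a misreading). In $\Rect_{i,j}$ the steps of $S$ along the edge $ab$ are precisely the lower strips of the adjacent diamonds, so the lemma does refine them, and iterating it up each diagonal works as you indicate; this is exactly what the paper does. But the steps of $T$ along the edge $bd$ are \emph{upper} strips of their adjacent diamonds, and there is no diamond to the left that would make them lower strips of anything. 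Hence Lemma~\ref{lem:semistandard_growth} never touches them. To standardize $T$ you would need a dual lemma peeling a cell off $\lambda/\mu$; this is plausible but is neither stated nor an immediate consequence of the given one. Without it your refinement terminates with $\widehat{S}$ standard but $T$ still semistandard, which is not a standard growth diagram, and the claim ``the left boundary of $D$ is $\widehat{S}$ followed by $\widehat{T}$'' is unjustified.

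The paper's proof is precisely this partial refinement and then stops: once $S$ has been replaced by single-box steps, one is computing ordinary one-box jdt slides on the semistandard tableau $T$, and confluence for \emph{that} is the classical theorem of Sch\"utzenberger already available for semistandard tableaux. No standardization of $T$, and hence no content-tracking or destandardization step, is required. Your content argument (that each $t_k$ transposes the $k$th and $(k{+}1)$st content entries, and that the net permutation carries $T$'s multiplicities onto those of $[ac]$) is correct and would finish your approach if the dual peeling lemma were supplied, but it is machinery the paper avoids entirely.
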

\begin{proof}
Consider a jdt slide on a tableau of shape $\lambda / \mu$ with respect to a horizontal strip $\mu / \nu$; it is the result of applying the growth diagram on the left of Figure \ref{fig:jdt boxbybox}. By repeated applications of the previous lemma, the growth diagram on the right of the same figure (with the shapes on the bottom left side decreasing one box at a time in the appropriate order) produces the same output. Hence a jdt slide with respect to a horizontal strip produces the same result as a sequence of jdt slides with respect to boxes, starting with the top-right box of the horizontal strip and finishing with the bottom-left box of the strip. The fact that jdt with respect to a box is confluent is standard, see \cite{Sch}.
\end{proof}

\begin{figure}
\psset{unit=.5cm}
\begin{pspicture}(0,-.5)(14,7.5)
\rput(0,1){
\multirput(1,0)(1,1){4}{\psline(0,0)(-1,1)}
\multirput(1,0)(-1,1){2}{\psline(0,0)(3,3)}
\uput[d](1,0){$\nu$}
\uput[l](0,1){$\mu$}
\uput[u](3,4){$\lambda$}
}
\rput(7,0){
\multirput(4,0)(1,1){4}{\psline(0,0)(-4,4)}
\multirput(4,0)(-1,1){5}{\psline(0,0)(3,3)}
\uput[d](4,0){$\nu$}
\uput[l](0,4){$\mu$}
\uput[u](3,7){$\lambda$}
}
\end{pspicture}
\psset{unit=1cm}
\caption{Decomposing a jdt slide with respect to a horizontal strip into jdt slides with respect single boxes.}
\label{fig:jdt boxbybox}
\end{figure}
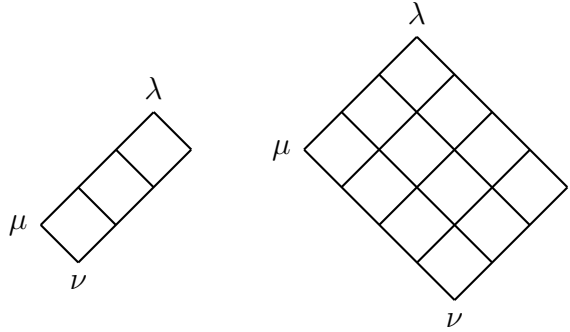


\begin{prop}
As an element of the Berenstein-Kirillov group, the Sch\"utzenberger involution computed by $\Evac_j$ is given by 
\begin{displaymath}
q_{j-1} := t_1(t_2t_1)\cdots(t_{j-1}t_{j-2}\cdots t_1)
\end{displaymath}
\end{prop}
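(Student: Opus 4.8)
The plan is to read the element of $\B$ directly off the combinatorial shape of the triangular growth diagram $\Evac_j$, and then to recognize the resulting word as $q_{j-1}$ via an elementary commutation rearrangement.

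First I would catalogue the primitive diamonds of $\Evac_j$. Recall $\Evac_j$ consists of the diamonds lying right of $[ac]$ and left of $[bc]$, with $a=(0,0)$, $b=(2j,0)$, $c=(j,j)$, the input chain along $[ac]$ and $\emptyset$ along the bottom edge. A short check of which diamonds fit inside this triangle shows that the diamond centered at $(x,y)$ belongs to $\Evac_j$ exactly when $y\ge 1$ and $y+1\le x\le 2j-1-y$; writing $x=y+2m-1$ this is $m\ge 1$, $y\ge 1$, $m+y\le j$, so there are $\binom{j}{2}$ diamonds, which I would group into $j-1$ \emph{waves}: the $m$-th wave $W_m$ consists of the diamonds centered at $(y+2m-1,y)$ for $y=1,\dots,j-m$. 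Consecutive diamonds in a wave are chained (the right vertex of one is the bottom vertex of the next above it), so $W_m$ contributes the word $t_1t_2\cdots t_{j-m}$ when processed in order of increasing row. Inspecting the bottom, left, and top vertices of a generic diamond shows that for the first wave both the left and top vertices lie on the input edge $[ac]$, while for $m\ge 2$ they are the right vertices of (strictly earlier) diamonds in $W_{m-1}$, and the bottom vertex is either on the bottom edge or the right vertex of the next-lower diamond in $W_m$. Hence processing the diagram left to right, wave by wave and within each wave from the bottom row up, respects all growth-rule dependencies; by the definition of semistandard growth diagrams via Bender-Knuth moves it computes
\begin{displaymath}
\Evac_j \;=\; (t_1t_2\cdots t_{j-1})(t_1t_2\cdots t_{j-2})\cdots(t_1t_2)(t_1),
\end{displaymath}
the diamond centered in row $k$ contributing $t_k$.

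It then remains to prove the purely algebraic identity $(t_1t_2\cdots t_n)(t_1t_2\cdots t_{n-1})\cdots(t_1)=q_n$ for all $n$, using only $t_it_k=t_kt_i$ for $|i-k|>1$. Denote the left side $A_n$ and put $p_n=t_nt_{n-1}\cdots t_1$, so $q_n=q_{n-1}p_n$ by definition; since $A_1=t_1=q_1$, it suffices to check $A_n=A_{n-1}p_n$. This follows by sliding the final $t_n$ of the first block of $A_n$ rightward: as $t_n$ commutes with every generator of index $\le n-2$, pushing it (together with the tail $t_nt_{n-1}\cdots$ it accumulates) through the successive blocks $t_1\cdots t_{n-1},\,t_1\cdots t_{n-2},\dots,t_1$ leaves behind exactly $A_{n-2}$ and deposits the tail $p_n$ at the far right, so $A_n=(t_1\cdots t_{n-1})A_{n-2}p_n=A_{n-1}p_n$.

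The main obstacle is the bookkeeping in the first step: verifying that the list of diamonds of $\Evac_j$ is exactly as claimed and that the wave-by-wave order violates no growth-rule dependency. This is elementary but requires a small case analysis on the positions of a diamond's three input vertices. One could also sidestep part of it by observing that deleting $W_1$ from $\Evac_j$ leaves a shift of $\Evac_{j-1}$, so $\Evac_j=(t_1\cdots t_{j-1})\,\Evac_{j-1}$, and running the whole argument by induction on $j$; in that form the algebraic step becomes the identity $q_j=(t_1\cdots t_j)\,q_{j-1}$.
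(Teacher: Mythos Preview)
Your argument is correct and is essentially the paper's proof: both enumerate the $\binom{j}{2}$ diamonds of $\Evac_j$ and process them along the Northeast diagonals $(2,1),(3,2),\ldots,(j,j-1)$, then $(4,1),(5,2),\ldots$, etc. The only difference is that you record the resulting Bender--Knuth word with the first-applied generator on the \emph{left}, obtaining $(t_1t_2\cdots t_{j-1})\cdots(t_1)$, and then prove the commutation identity $A_n=q_n$; under the paper's left-action convention (first applied written rightmost) the same diagonal order already reads off $t_1(t_2t_1)\cdots(t_{j-1}\cdots t_1)=q_{j-1}$ directly, so your second step, while correct, is not needed.
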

\begin{proof}
The diamonds in $\Evac_j$ have centers 
\begin{displaymath}
(2,1),(4,1),\ldots, (2j-2,1), (3,2),(5,2),\ldots, (2j-3,2), \ldots, (j,j-1).
\end{displaymath}  
Although the recipe says to apply diamond rules left to right, any order is possible provided that that inputs to each step have been computed by previous steps.  Different orderings correspond to different representations of the same element in the BK group.  The desired representation arises by doing moves in the order
\begin{displaymath}
(2,1),(3,2),\ldots, (j,j-1), (4,1),(5,2), \ldots, (j+1,j-2), \ldots , (2j-2,1). 
\end{displaymath}
\end{proof}

\section{Cactus group relations inside the Berenstein-Kirillov group}
\subsection{Definition of the cactus operators}
We use the growth diagrams $\Rect_{i,j}$ and $\Evac_j$ as building blocks to construct more complicated ones.  Each rectangle in subsequent pictures is considered to be a copy of some $\Rect_{i,j}$ growth diagram and each triangle a copy of some $\Evac_j$ growth diagram.  Note these pictures are merely schematic; in reality $\Evac_j$ is not quite a triangle because its bottom boundary is zig-zagged.

Let $\Evac_{[i,j]}$ be the growth diagram given in Figure \ref{fig:q_ij}.  The five component pieces occurring from left to right to make up $\Evac_{[i,j]}$ are $\Evac_{i-1}$, $\Rect_{i-1,j-i+1}$, $\Evac_{j-i+1}$, $\Rect_{j-i+1,i-1}$, and $\Evac_{i-1}$.  Let $q_{[i,j]}$ denote the corresponding map of semistandard tableaux.

\begin{figure}
\begin{pspicture}(-1,-1)(15,5)
\psline(3,3)(6,0)(10,4)(14,0)(0,0)(4,4)(8,0)(11,3)
\uput[ul](1.5,1.5){$i-1$}
\uput[ur](5.5,2.5){$i-1$}
\uput[ul](8.5,2.5){$i-1$}
\uput[ur](12.5,1.5){$i-1$}
\uput[ul](3.5,3.5){$j-i+1$}
\uput[ur](10.5,3.5){$j-i+1$}

\uput[l](0,0){$a$}
\uput[d](6,0){$b$}
\uput[d](8,0){$c$}
\uput[r](14,0){$d$}
\uput[u](7,1){$e$}
\uput[u](4,4){$f$}
\uput[u](10,4){$g$}
\uput[ul](3,3){$u$}
\uput[ur](11,3){$v$}
\end{pspicture}
\caption{The growth diagram $\Evac_{[i,j]}$.}
\label{fig:q_ij}
\end{figure}

\begin{ex}
We will show that 
\[q_{[2,4]}\left(\ \tableau[sY]{1, 2, 4\\2, 3\\ 5}\ \right) = \tableau[sY]{1, 3, 4\\2, 4\\ 5}.\]
The calculation is carried out in Figure \ref{fig:q action example}.

\begin{figure}
\begin{pspicture}(0,-.5)(10,5.5)
\pspolygon(2,0)(1,1)(4,4)(8,0)(9,1)(6,4)
\psline(2,2)(4,0)(7,3)
\psline(3,3)(6,0)(8,2)
\psdots(0,0)(10,0)(5,5)
\uput[d](0,0){$\emptyset$}
\uput[d](2,0){$\emptyset$}
\uput[d](4,0){$\emptyset$}
\uput[d](6,0){$\emptyset$}
\uput[d](8,0){$\emptyset$}
\uput[d](10,0){$\emptyset$}
\uput[l](1,1){$1$}
\uput[l](3,1){$2$}
\uput[l](5,1){$1$}
\uput[l](7,1){$1$}
\uput[r](9,1){$1$}
\uput[l](2,2){$21$}
\uput[l](4,2){$21$}
\uput[l](6,2){$2$}
\uput[r](8,2){$11$}
\uput[l](3,3){$22$}
\uput[l](5,3){$31$}
\uput[r](7,3){$21$}
\uput[l](4,4){$32$}
\uput[r](6,4){$32$}
\uput[l](5,5){$321$}
\end{pspicture}
\caption{The action of $q_{[2,4]}$.}
\label{fig:q action example}
\end{figure}
\end{ex}

\begin{prop} \label{prop:qij}
\begin{displaymath}
q_{[i,j]} = q_{j-1}q_{j-i}q_{j-1}
\end{displaymath}
\end{prop}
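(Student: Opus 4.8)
The plan is to identify the composition of growth diagrams defining $q_{[i,j]}$ with the composition $q_{j-1} q_{j-i} q_{j-1}$ of elements already identified in the Berenstein-Kirillov group. Recall that in Figure \ref{fig:q_ij} the diagram $\Evac_{[i,j]}$ is built left to right from $\Evac_{i-1}$, $\Rect_{i-1,j-i+1}$, $\Evac_{j-i+1}$, $\Rect_{j-i+1,i-1}$, $\Evac_{i-1}$. By the proposition computing $\Evac_j$ as a BK-group element, the two triangles $\Evac_{i-1}$ at the ends each equal $q_{i-2}$ in $\B$, and the middle triangle $\Evac_{j-i+1}$ equals $q_{j-i}$. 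So what must be shown is that the two rectangular pieces $\Rect_{i-1,j-i+1}$ and $\Rect_{j-i+1,i-1}$, interleaved with these three $q$'s, collapse so that the whole thing equals $q_{j-1}q_{j-i}q_{j-1}$. Equivalently, since a growth diagram is just a visual representation of a composition of Bender-Knuth moves (one $t_\ell$ per primitive diamond with center in column of height $\ell$, applied left to right, with moves in a common column commuting), I would compute the multiset of diamonds in $\Evac_{[i,j]}$ together with a valid evaluation order, and compare with the multiset of diamonds in the concatenation $\Evac_{j-1} \cdot \Evac_{j-i} \cdot \Evac_{j-1}$.

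Concretely, I would proceed as follows. First, note that $\Evac_{j-1}$, which realizes $q_{j-1}$, is the triangular region whose diamonds have centers $(2,1),\ldots,(2j-4,1),(3,2),\ldots,(j-1,j-2)$ up to horizontal shift. Second, I would observe that the ``big triangle'' $\Evac_{j-1}$ decomposes geometrically: the region bounded by the path $ac$ and the path $fg$ (notation as in Figure \ref{fig:q_ij}) can be cut by the internal line through $u$ into $\Evac_{i-1}$ on the far left together with a $\Rect_{i-1,j-i+1}$ rectangle — this is precisely the standard ``nesting'' decomposition of a big evacuation triangle into a smaller evacuation triangle and a rectification rectangle, which is immediate from the definitions of $\Evac_j$ and $\Rect_{i,j}$ as regions of the checkerboard lattice. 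Applying the same decomposition to the rightmost big triangle (reflected), one sees that the left half of Figure \ref{fig:q_ij}, namely $\Evac_{i-1}$ followed by $\Rect_{i-1,j-i+1}$, is exactly $\Evac_{j-1}$, i.e. realizes $q_{j-1}$; and symmetrically $\Rect_{j-i+1,i-1}$ followed by the final $\Evac_{i-1}$ is a second copy of $\Evac_{j-1}$ realizing $q_{j-1}$; and the middle $\Evac_{j-i+1}$ realizes $q_{j-i}$. Reading the three blocks left to right gives $q_{[i,j]} = q_{j-1} q_{j-i} q_{j-1}$ as required. (One must be mildly careful that the diamonds of $\Rect_{i-1,j-i+1}$ as it sits in $\Evac_{[i,j]}$ and as it sits in the decomposed $\Evac_{j-1}$ are literally the same diamonds in the same columns, so that no reordering across columns is needed — but since a horizontal shift of a growth diagram does not change the element of $\B$ it represents, this is automatic.)

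I expect the main obstacle to be the bookkeeping of the geometric decomposition: verifying that the internal boundary lines in Figure \ref{fig:q_ij} (through $u$ and through $v$, at heights $i-1$ and $j-1-(i-1)$ etc.) really do split each big $\Evac_{j-1}$ triangle into the advertised $\Evac_{i-1}$-plus-$\Rect$ (or $\Rect$-plus-$\Evac_{i-1}$) pieces with matching sizes $i-1$, $j-i+1$, and that the parameters line up — that the rectangle is $\Rect_{i-1,j-i+1}$ and not, say, $\Rect_{i-1,j-i}$. This is an indexing check rather than a conceptual one: it amounts to the single standard identity that the evacuation triangle of height $j-1$, when its bottom-left corner of height $i-1$ is sliced off, leaves an evacuation triangle of height $i-1$ and a rectangle rectifying skew shapes with $|\mu|=i-1$, $|\lambda|=j-1$, i.e. a $\Rect_{i-1,j-i}$ — so in fact I would want to double-check whether the middle triangle has index $j-i+1$ or $j-i$ against this, and reconcile the ``$+1$'' with the convention (visible in the earlier $\Evac_j$ proposition) that $\Evac_j$ contributes $q_{j-1}$, not $q_j$. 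Once that shift convention is pinned down the proof is a one-line appeal to the geometric decomposition together with the already-proved formula for $\Evac_j$ in $\B$.
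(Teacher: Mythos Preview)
Your geometric decomposition is wrong, and the error is not just an index shift. You claim that the first two pieces of $\Evac_{[i,j]}$, namely $\Evac_{i-1}$ followed by $\Rect_{i-1,j-i+1}$, together form the big triangle $\Evac_{j-1}$ (or $\Evac_j$). They do not. In the notation of Figure~\ref{fig:q_ij}, piece~1 is the triangle $aub$ and piece~2 is the parallelogram $bufe$; their union is the pentagonal region bounded by $a\!-\!u\!-\!f\!-\!e\!-\!b\!-\!a$, which is not a triangle. As a sanity check, take $i=2$, $j=4$: then piece~1 is trivial and piece~2 is $\Rect_{1,3}$, which as a word in Bender--Knuth moves is $t_1 t_2 t_3$, whereas $q_{j-1}=q_3=t_1 t_2 t_1 t_3 t_2 t_1$. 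These are not equal in $\B$.

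What the paper actually does is observe that pieces $1+2+3$ together form the triangle $afc$, which \emph{is} a copy of $\Evac_j$ and hence computes $q_{j-1}$; and symmetrically pieces $3+4+5$ form the triangle $bgd$, another copy of $\Evac_j$. These two big triangles overlap in piece~3, the middle triangle $bec=\Evac_{j-i+1}$, which computes $q_{j-i}$. The paper then runs the diagram as follows: apply pieces $1+2+3$ to get $[cf]=q_{j-1}(T)$; run piece~3 \emph{backwards} (using that $q_{j-i}$ is an involution) to obtain $[bef]=q_{j-i}q_{j-1}(T)$; note $[ef]=[eg]$; then apply pieces $3+4+5$ to get $[dg]=q_{j-1}q_{j-i}q_{j-1}(T)$. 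Algebraically: if $A,B,C$ denote the maps of pieces $1+2$, $3$, $4+5$ respectively, then $BA=q_{j-1}$, $CB=q_{j-1}$, $B=q_{j-i}=B^{-1}$, so $CBA=(CB)B^{-1}(BA)=q_{j-1}q_{j-i}q_{j-1}$. Your proposal misses precisely this overlap-and-undo step.
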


\begin{proof}
Both sides clearly only operate on entries of tableaux between $1$ and $j$ inclusive, so we assume our input $T$ to $q_{[i,j]}$ has all entries at most $j$.  Using the vertex labeling in Figure \ref{fig:q_ij} we have $[af] = T$.  Viewing triangle $afc$ as a single copy of $\Evac_j$ we have $[cf] = q_{j-1}(T)$.  Then, running the growth diagram represented by triangle $bec$ backwards yields that $[bef] = q_{j-i}(q_{j-1}(T))$.  By our conventions, $[ef]=[eg]$ as all of the intervening diamonds lie outside the growth diagram, so $[beg] = [bef] = q_{j-i}(q_{j-1}(T))$.  Finally, viewing triangle $bgd$ as a copy of $\Evac_j$ yields
\begin{displaymath}
[dg] = q_{j-1}(q_{j-i}(q_{j-1}(T))).
\end{displaymath}
\end{proof}

\subsection{Locality}
\begin{prop} \label{prop:qij_local}
Let $T$ be a semistandard Young tableau with maximum entry at most $n$ and let $T' = q_{[i,j]}(T)$ for some $1 \leq i < j \leq n$.  Then $T'$ possesses, and is uniquely determined by, the following properties:
\begin{enumerate}
\item $T'|_{[1,i-1]} = T|_{[1,i-1]}$ and $T'|_{[j+1,n]} = T|_{[j+1,n]}$,
\item $P = \jdt(T|_{[i,j]})$ and $P' = \jdt(T'|_{[i,j]})$ are related by evacuation,
\item $T_{[i,j]}$ and $T'_{[i,j]}$ are related by a sequence of crystal operators. 
\end{enumerate}
\end{prop}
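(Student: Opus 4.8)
The plan is to read off properties (1)--(3) directly from the structure of the growth diagram $\Evac_{[i,j]}$ in Figure \ref{fig:q_ij}, and then to argue uniqueness using the crystal-theoretic input collected in Section 2.3. For property (1): since $\Evac_{[i,j]}$ is built out of $\Evac_{i-1}$, $\Rect_{i-1,j-i+1}$, $\Evac_{j-i+1}$, $\Rect_{j-i+1,i-1}$, and $\Evac_{i-1}$, every primitive diamond in it has its center in a row $\leq j$, and in fact (tracing through the component pieces) the diamonds touching rows $\leq i-1$ only rearrange shapes within those rows in a way that cancels out across the two copies of $\Evac_{i-1}$ and the two rectangles; more simply, $q_{[i,j]} = q_{j-1}q_{j-i}q_{j-1}$ by Proposition \ref{prop:qij}, and each $q_m = t_1(t_2t_1)\cdots(t_m\cdots t_1)$ is a word in $t_1,\ldots,t_{j-1}$, so $q_{[i,j]}$ is a word in $t_1,\ldots,t_{j-1}$ and therefore acts trivially on the entries $>j$. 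The equality $T'|_{[j+1,n]}=T|_{[j+1,n]}$ follows. For the lower end $T'|_{[1,i-1]}=T|_{[1,i-1]}$ one uses the diamond decomposition of the figure: reading the diagram as triangle $afc$ ($=\Evac_j$) followed by the reverse of triangle $bec$ ($=\Evac_{j-i}$, acting only on rows $\leq j-i$) followed by triangle $bgd$ ($=\Evac_j$), the composite of the two outer $\Evac_j$'s restricted to rows $\leq i-1$ is $\evac\circ\evac = \mathrm{id}$ on that part while the middle piece does not touch rows $\leq i-1$ at all; hence the restriction to entries $\leq i-1$ is unchanged. (Alternatively this is immediate from the known fact that evacuation of a standard tableau fixes nothing below the relevant truncation, applied box-by-box via the semistandard growth rule.)

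For property (2): in the vertex labeling of Figure \ref{fig:q_ij}, the subtableau $T|_{[i,j]}$ is $[ef]$ read appropriately — more precisely, cutting the diagram along the two paths through $e$ isolates a copy of $\Evac_{[i,j-i+1]}$-type configuration acting on $T|_{[i,j]}$, and by the confluence of semistandard jdt (the preceding Proposition) together with the identification of $\Rect$ with rectification, one gets $P=\jdt(T|_{[i,j]})$ at an appropriate vertex; the triangle $\Evac_{j-i+1}$ sitting on top of that rectangle applies Sch\"utzenberger involution, so the corresponding straight-shape tableau coming out is $\evac(P)$; running the second rectangle $\Rect_{j-i+1,i-1}$ backwards un-rectifies it into $T'|_{[i,j]}$, whose rectification is therefore $P' = \evac(P)$. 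For property (3): $q_{[i,j]}$ is a composition of Bender-Knuth moves by Section \ref{sec:BK}, and each Bender-Knuth move $t_m$ is known to be a composition of crystal operators $f_\bullet$ and $e_\bullet$ acting on the reading word — indeed $t_m$ on the $m,m{+}1$ content is exactly the Lascoux--Sch\"utzenberger (Weyl-group) action, which lies in the group generated by $f_m,e_m$; hence $T'|_{[i,j]}$ is obtained from $T|_{[i,j]}$ by a sequence of crystal operators.

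For uniqueness: suppose $T''$ is another semistandard tableau of the same outer shape satisfying (1)--(3). By (1), $T''$ agrees with $T'$ outside the window $[i,j]$, so it suffices to show $T''|_{[i,j]} = T'|_{[i,j]}$. These two tableaux have the same skew shape, by (3) both are connected to $T|_{[i,j]}$ by crystal operators and hence connected to each other by crystal operators, and by (2) both rectify to $\evac(P) = P'$. Now invoke Proposition \ref{prop:haiman-like result}: two semistandard tableaux of the same shape that are connected by crystal operators and have the same rectification must be equal. This gives $T''|_{[i,j]} = T'|_{[i,j]}$ and hence $T'' = T'$.

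The main obstacle I anticipate is property (2) — carefully matching the intermediate vertices of Figure \ref{fig:q_ij} with rectification and evacuation of the \emph{windowed} subtableau, since the triangles $\Evac_j$ acting on the full tableau must be shown to restrict correctly to $\Evac_{j-i+1}$ acting on the window after the first rectification, and one must be careful that the two flanking $\Rect$ blocks really are inverse rectification/un-rectification on the relevant shapes. The box-by-box decomposition of Lemma \ref{lem:semistandard_growth} and the confluence proposition are exactly what make this bookkeeping go through, but assembling them into the clean statement "$P' = \evac(P)$" is where the real work lies; properties (1) and (3) and the uniqueness argument are comparatively formal once the crystal lemmas from Section 2.3 are in hand.
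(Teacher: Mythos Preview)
Your argument for property~(3) has a genuine gap: Bender--Knuth involutions are \emph{not} compositions of crystal operators on skew tableaux. A minimal counterexample: on the skew shape $(2,1)/(1)$, let $A$ be the filling with a $1$ in the top row and a $2$ in the bottom row, so its reading word is $21$; then $t_1(A)$ has reading word $12$. But $21$ is both $e_1$- and $f_1$-killed (the unique $2$ and the unique $1$ bracket each other), so it is a singleton crystal component, whereas $12$ sits in the three-element string $\{11,12,22\}$. Thus $A$ and $t_1(A)$ lie in different crystal components, and your reduction of (3) to ``$q_{[i,j]}$ is a word in the $t_m$'' collapses. (The Lascoux--Sch\"utzenberger reflection $s_i$ \emph{is} built from $e_i,f_i$, but $s_i\neq t_i$ on skew tableaux.) The paper's route is different and uses the diagram: $P=[be]$ and $P'=[ce]$ are straight-shape tableaux of the same shape, hence crystal-connected by Proposition~\ref{prop:crystal connected}; and since the middle evacuation triangle lies entirely at or below height $j-i+1$, one has $[ef]=[eg]$, so $T|_{[i,j]}$ and $T'|_{[i,j]}$ arise from $P$ and $P'$ by the \emph{same} sequence of outward jdt slides. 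Proposition~\ref{prop:crystal commute with jdt} then transports the crystal connection up to the skew tableaux.

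Your argument for $T'|_{[1,i-1]}=T|_{[1,i-1]}$ also does not work as written. In the factorisation $q_{j-1}q_{j-i}q_{j-1}$, the middle $q_{j-i}$ acts on entries $1,\ldots,j-i+1$ of the \emph{already-evacuated} tableau, which certainly overlaps rows $\leq i-1$; the two outer $q_{j-1}$'s have no reason to cancel on that window with something nontrivial sitting in between. The paper again leverages $[ef]=[eg]$: reading the left half of Figure~\ref{fig:q_ij} gives $T|_{[1,i-1]}=[au]=\Evac([bu])=\Evac(\jdt([ef]))$, and the right half gives $T'|_{[1,i-1]}=[dv]=\Evac(\jdt([eg]))$, whence equality. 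Ironically, property~(2) --- which you flag as the main obstacle --- is the most immediate once the picture is set up: $P=[be]$ and $P'=[ce]$ are literally the two sides of the central $\Evac_{j-i+1}$ triangle. Your uniqueness argument via Proposition~\ref{prop:haiman-like result} is correct and matches the paper.
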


\begin{proof} \ 
\begin{enumerate}
\item Using the vertex labeling from Figure \ref{fig:q_ij} we have
\begin{displaymath}
T_{[1,i-1]} = [au] = \Evac([bu]) = \Evac(\jdt([ef]))
\end{displaymath}
and
\begin{displaymath}
T_{[1,i-1]} = [dv] = \Evac([cv]) = \Evac(\jdt([eg]))
\end{displaymath}
which are equal since $ef=eg$.  The diagram does not extend above height $j$, so $T_{[j+1,n]} = T'_{[j+1,n]}$ is immediate.
\item In Figure \ref{fig:q_ij}, $P = [be]$ and $P' = [ce]$ which are the input and output of a growth diagram for evacuation.
\item By Proposition \ref{prop:crystal connected}, $P$ and $P'$ are connected by crystal operators.  Now $[ef]=[eg]$ so $T_{[i,j]}$ and $T'_{[i,j]}$ are obtained from $P$ and $P'$ respectively by a common sequence of outward jdt slides.  It follows from Proposition \ref{prop:crystal commute with jdt} that $T_{[i,j]}$ and $T'_{[i,j]}$ are related by crystal operators.
\end{enumerate}
The uniqueness follows from Proposition \ref{prop:haiman-like result}.
\end{proof}

Proposition \ref{prop:qij_local} in particular implies that $q_{[i,j]}$ acts locally in the following sense.  If $T' = q_{[i,j]}(T)$ then the first statement implies that $T$ and $T'$ agree except with regards to the placement of the entries from $i$ to $j$.  Moreover, the second and third statements characterize $T'|_{[i,j]}$ in such a way to make it clear that it depends only on $T_{[i,j]}$ and not on the rest of $T$.  The second cactus relation, that $q_{[i,j]}$ and $q_{[k,l]}$ commute when $[i,j]$ and $[k,l]$ are disjoint follows immediately.

\subsection{Proof of the cactus relations}
We are ready to prove Theorem \ref{thm:map} which we reformulate as follows.
\begin{theorem*} The maps $q_{[i,j]}$ for $1 \leq i < j \leq n$ satisfy the relations of the cactus group, namely
\begin{itemize}
\item if $i < j$ then $q_{[i,j]}^2 = I$,
\item if $i < j < k < l$ then $q_{[i,j]}q_{[k,l]} = q_{[k,l]}q_{[i,j]}$,
\item if $i \leq j < k \leq l$ then $q_{[i,l]}q_{[j,k]}q_{[i,l]} = q_{[j',k']}$ where $j' = i+l-k$ and $k' = i+l-j$.
\end{itemize}
\end{theorem*}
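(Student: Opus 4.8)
The plan is to verify the three cactus relations one at a time; the first two follow quickly from what is already in place, and the third is where the work is. For the commuting relation $q_{[i,j]}q_{[k,l]}=q_{[k,l]}q_{[i,j]}$ with $i<j<k<l$, nothing new is needed beyond the remark following Proposition \ref{prop:qij_local}: that proposition shows $q_{[i,j]}$ changes only the entries of a tableau lying in the interval $[i,j]$ and that the outcome depends only on $T|_{[i,j]}$, and likewise for $q_{[k,l]}$; since the two intervals are disjoint the operations act on independent data and so commute. For the involution relation $q_{[i,j]}^2=I$ I would invoke the left--right symmetry of the growth diagram $\Evac_{[i,j]}$ of Figure \ref{fig:q_ij}. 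The semistandard growth rule is an involution on diamonds (it is induced by the Bender--Knuth move $t_1$, which is an involution), so running any growth diagram from right to left computes the inverse of the map it computes from left to right; equivalently, the mirror image of a growth diagram computes the inverse map. But the mirror image of $\Evac_{[i,j]}$ is again $\Evac_{[i,j]}$: mirroring both reverses the order of the five blocks and interchanges $\Rect_{i-1,\,j-i+1}$ with $\Rect_{j-i+1,\,i-1}$ while fixing each $\Evac$-block, and these two effects cancel. Hence $q_{[i,j]}=q_{[i,j]}^{-1}$. (One could instead argue via Proposition \ref{prop:qij_local}: $q_{[i,j]}^2(T)$ agrees with $T$ outside $[i,j]$, its window rectifies to $\evac^2(\jdt(T|_{[i,j]}))=\jdt(T|_{[i,j]})$, and it is crystal-connected to $T|_{[i,j]}$, so it equals $T$ by Proposition \ref{prop:haiman-like result}.)

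For the conjugation relation $q_{[i,l]}q_{[j,k]}q_{[i,l]}=q_{[j',k']}$ with $j'=i+l-k$ and $k'=i+l-j$, I would use Proposition \ref{prop:qij_local} as the main tool. First note $j'\leq k'$ and $[j',k']\subseteq[i,l]$ (the inclusions $i\leq j'$ and $k'\leq l$ are equivalent to $k\leq l$ and $i\leq j$), and that $[j',k']$ has the same length as $[j,k]$. Reduce to the case in which all entries of $T$ lie in $[i,l]$, since each of the three factors fixes the entries $<i$ and $>l$ and depends only on $T|_{[i,l]}$, and the same holds for $q_{[j',k']}$. Set $S=q_{[i,l]}(T)$, $S'=q_{[j,k]}(S)$, and $R=q_{[i,l]}(S')$. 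By the uniqueness clause of Proposition \ref{prop:qij_local} it is enough to check that $R$ has the three properties characterizing $q_{[j',k']}(T)$: that $R$ agrees with $T$ off the entries in $[j',k']$; that $\jdt(R|_{[j',k']})=\evac(\jdt(T|_{[j',k']}))$; and that $R|_{[j',k']}$ and $T|_{[j',k']}$ are connected by crystal operators. The last of these is the easy one: by Proposition \ref{prop:qij_local} each factor modifies its own window only up to crystal operators, and by Proposition \ref{prop:crystal commute with jdt} crystal operators commute with jdt slides, so chaining the three connections yields a crystal connection between the windows of $R$ and $T$.

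The remaining two properties are statements about the chains of (horizontal strips of) shapes underlying the growth diagrams, and I would prove them by unwinding the definitions of the three factors from Figure \ref{fig:q_ij} together with the classical identity — in its standard-tableau form first — that full Sch\"utzenberger involution on the alphabet $[i,l]$ conjugates Sch\"utzenberger involution on a subwindow $[j,k]$ into Sch\"utzenberger involution on the mirror subwindow $[j',k']$; this standard identity I would obtain by a mirror-symmetry argument on the composite growth diagram built from $\Evac_{[i,l]}$, $\Evac_{[j,k]}$, $\Evac_{[i,l]}$, in the same spirit as for the involution relation, using that diamonds in disjoint columns commute and that the growth rule is reversible. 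The step I expect to be the main obstacle is the first property, pinning down exactly which entries the composite moves: the outer factor $q_{[i,l]}$ genuinely permutes the positions of entries valued in $[i,j'-1]$ and in $[k'+1,l]$, and one must show that conjugating $q_{[j,k]}$ by it puts precisely those entries back; this demands careful tracking of the regions of the growth diagram, and it is also the point at which one must check that the passage between the semistandard and standard (chain-of-shapes) statements is legitimate.
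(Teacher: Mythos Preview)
Your treatment of the first two relations is fine. For $q_{[i,j]}^2=I$ the paper actually takes the shorter route via Proposition~\ref{prop:qij}: since $q_{[i,j]}=q_{j-1}q_{j-i}q_{j-1}$ and each $q_k$ (Sch\"utzenberger involution) squares to the identity, the involution relation is immediate. Your mirror-symmetry argument is correct but unnecessary here. The commutation relation is handled the same way in both.

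For the conjugation relation your plan has a real gap, and it is precisely in the step you call ``the easy one.'' You want $R|_{[j',k']}$ and $T|_{[j',k']}$ to be crystal-connected on the alphabet $[j',k']$. Chaining the three connections supplied by Proposition~\ref{prop:qij_local} only gives that $R|_{[i,l]}$ and $T|_{[i,l]}$ are crystal-connected on the \emph{full} alphabet $[i,l]$: the outer two applications of $q_{[i,l]}$ use crystal operators $f_m$ with $m$ outside $[j',k'-1]$, and these do not restrict to crystal operators on the subwindow. Knowing that $T$ and $R$ agree outside $[j',k']$ does not repair this; two skew tableaux of the same shape whose rectifications have the same straight shape need not lie in the same crystal component. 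What is actually required is that $T|_{[j',k']}$ and $R|_{[j',k']}$ arise from their (straight-shape, hence crystal-connected) rectifications by a \emph{common} sequence of outward jdt slides, and this is not something the chaining argument produces.

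The paper resolves this, and the ``main obstacle'' you correctly flag, by a different route. First it reduces to $i=1$ (a genuine simplification you omit: then $q_{[1,l]}=q_{l-1}$ is a single $\Evac_l$ triangle rather than the five-piece $\Evac_{[i,l]}$). It then runs $\Evac_l$ once on $T$ and once on $T'=q_{[1,l]}q_{[j,k]}q_{[1,l]}(T)$, subdivides each triangle into six regions, and proves region by region that three of them are filled identically in the two diagrams while the small inner evacuation triangle is reflected. This simultaneously establishes (a) that $T$ and $T'$ agree outside $[l-k+1,l-j+1]$, (b) that the rectifications of the windows are related by evacuation, and crucially (c) that the outer jdt data $[eg]$ is literally the same in both diagrams, so the two windows are obtained from their rectifications by identical outward slides. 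Point (c) is exactly the ingredient your chaining argument cannot supply, and it is what makes Proposition~\ref{prop:crystal commute with jdt} applicable. Your vaguer ``mirror-symmetry on the composite $\Evac_{[i,l]}\Evac_{[j,k]}\Evac_{[i,l]}$'' is aiming at the right picture, but without the reduction to $i=1$ and the explicit region decomposition it is not a proof.
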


\begin{proof}
The first relation follows from Proposition \ref{prop:qij} and the fact that evacuation is an involution.  The second relation is clear since $q_{[i,j]}$ and $q_{[k,l]}$ operate on disjoint parts of tableaux as established in Proposition \ref{prop:qij_local}.  We claim for the third relation that it suffices to prove the $i=1$ case.  Indeed, assuming this case and given $i \leq j < k \leq l$ we have
\begin{align*}
q_{[i,l]}q_{[j,k]}q_{[i,l]} &= (q_{[1,l]}q_{[1,l-i+1]}q_{[1,l]})q_{[j,k]}(q_{[1,l]}q_{[1,l-i+1]}q_{[1,l]}) \quad \textrm{ (by Proposition \ref{prop:qij})} \\
&= q_{[1,l]}q_{[1,l-i+1]}q_{[1+l-k,1+l-j]}q_{[1,l-i+1]}q_{[1,l]} \\ 
&= q_{[1,l]}q_{[j-i+1,k-i+1]}q_{[1,l]} \\ 
&= q_{[i+l-k,i+l-j]} 
\end{align*}
where the last three steps are all applications of the $i=1$ case of the relation.

It remains to prove the $i=1$ case of the third relation.  Suppose $1 \leq j < k \leq l$.  Let a tableau $T$  be given.  Let
\begin{displaymath}
T' = q_{[1,l]}(q_{[j,k]}(q_{[1,l]}(T))).
\end{displaymath}
Apply the $\Evac_{l-1}$ growth diagram twice, once with input $T$ and once with input $T'$.  Recall that $\Evac_{l-1}$ computes $q_{l-1} = q_{[1,l]}$ so the outputs are $q_{l-1}(T)$ and $q_{l-1}(T') = q_{[j,k]}(q_{l-1}(T))$.  Break the growth diagram with input $T$ into six regions and label vertices as in Figure \ref{fig:cactus}.  Perform the same subdivision of the growth diagram with input $T'$, calling corresponding vertices $a', b', \ldots$.

We claim that the three regions labeled $*$ are filled in an identical manner in the two growth diagrams, whereas the fillings of the region labeled $\longleftrightarrow$ are reflections of each other.  This verification takes several steps, which we number for convenience.

\begin{enumerate}
\item The outputs $[dv] = q_{l-1}(T)$ and $[d'v'] = q_{[j,k]}(q_{l-1}(T))$ are related by $q_{[j,k]}$ and hence only differ in the interval $[j,k]$.  Therefore $[dh] = [d'h']$ and $[uv] = [u'v']$.
\item Also by definition of $q_{[j,k]}$ we have that $[ce] = \jdt([hu])$ and $[c'e'] = \jdt([h'u'])$ are related by evacuation.  
\item Applying the growth rules from right to left starting from $[dh]=[d'h']$ yields $[ch]=[c'h']$.  
\item Consider a Southeast to Northwest diagonal from edge $ch$ to edge $eu$.  Such encodes a skew tableau obtained by performing some $\jdt$ slides to $[hu]$ in one diagram and $[h'u']$ in the other.  Since $[hu]$ and $[h'u']$ are related by crystal operators, so are these two new tableaux, and in particular they have the same shape.  Letting this diagonal vary, we have $[eu] = [e'u']$.
\item As $[eu] = [e'u']$ and $[uv] = [u'v']$, the growth rules imply $[eg] = [e'g']$ and $[gv] = [g'v']$.
\item Triangle $bec$ is a small evacuation growth diagram, so $[b'e'] = \Evac([c'e']) = [ce] = \Evac([be])$.
\item Applying $\jdt$ to both sides of $[eg] = [e'g']$ yields $[bf] = [b'f']$.
\item The growth rules starting from $[bf] = [b'f']$ yields $[af]=[a'f']$.
\end{enumerate}

As $[af] = [a'f']$ and $[gv]=[g'v']$, the inputs $T=[av]$ and $T' = [a'v']$ agree outside the interval $[l-k+1,l-j+1]$.  Now consider the skew tableaux $[fg]$ and $[f'g']$ obtained by  restricting to this interval where $T$ and $T'$ differ.   Applying $\jdt$ to each yields $[be]$ and $[b'e']$ which are related by evacuation.  Moreover, $[eg]=[e'g']$ so a common sequence of outward jdt steps recover $[fg]$ and $[f'g']$.  By the characterization in Proposition \ref{prop:qij_local}, $T' = q_{[l-k+1,l-j+1]}(T)$ as desired.
\end{proof}

\begin{figure}
\begin{pspicture}(0,-.5)(10,5)
\pspolygon(0,0)(5,5)(10,0)
\psline(1.5,1.5)(3,0)(6.5,3.5)
\psline(2.5,2.5)(5,0)(7.5,2.5)
\uput[ul](.75,.75){$l-k$}
\uput[ul](2,2){$k-j+1$}
\uput[ul](3.75,3.75){$j-1$}
\uput[ur](8.75,1.25){$j-1$}
\uput[ur](7,3){$k-j+1$}
\uput[ur](5.75,4.25){$l-k$}
\uput[d](0,0){$a$}
\uput[d](3,0){$b$}
\uput[d](5,0){$c$}
\uput[d](10,0){$d$}
\uput[u](4,1){$e$}
\uput[r](1.5,1.5){$f$}
\uput[r](2.5,2.5){$g$}
\uput[l](7.5,2.5){$h$}
\uput[l](6.5,3.5){$u$}
\uput[d](5,5){$v$}
\rput(1.5,.5){$*$}
\rput(7.5,1){$*$}
\rput(4.5,3){$*$}
\rput(4,.3){$\longleftrightarrow$}

\end{pspicture}

\caption{Illustration of the cactus relation $q_{[1,l]}q_{[j,k]}q_{[1,l]} = q_{[l-k+1,l-j+1]}$}
\label{fig:cactus}
\end{figure}
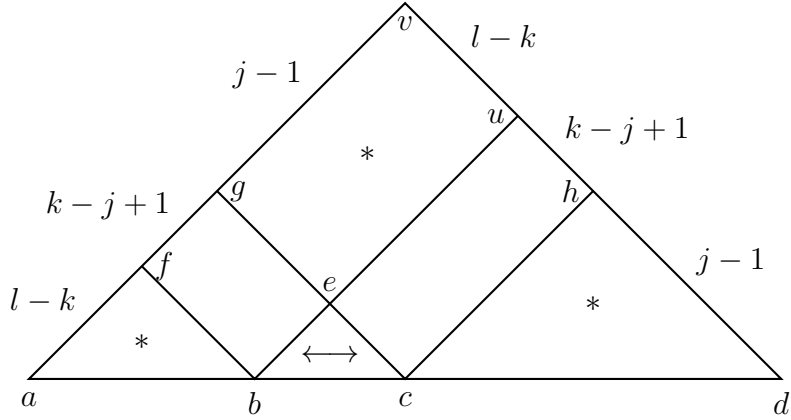

Note each cactus relation can be expanded in terms of the Bender-Knuth moves $t_i$.  The commutation relations are of particular interest because they do not obviously follow from known relations in the BK-group.  Our Theorem \ref{thm:new_relations} adds a family of relations that together with the previously known ones are sufficient to derive commutativity.

\proof[Proof of Theorem \ref{thm:new_relations}.]
By the second cactus relation, $q_i = q_{[1,i+1]}$ and $q_{[j,k]}$ commute whenever $i+1 < j < k$.  We will show that $t_i$ can be expressed in terms of $q_1,\ldots, q_i$, so $t_i$ also commutes with $q_{[j,k]}$.  As both are involutions this will imply $(t_iq_{[j,k]})^2 = I$.

Note that $q_1 = t_1$ and $q_2 = t_1t_2t_1$ so $t_1 = q_1$ and $t_2 = t_1^2t_2t_1^2 = q_1q_2q_1$.  In general $q_{i} = p_1p_2 \cdots p_{i} = q_{i-1}p_{i}$ where $p_j = t_jt_{j-1}\cdots t_1$.  As $q_{i-1}$ is an involution we get $q_{i-1}q_i = p_i$.  Therefore $t_i = p_ip_{i-1}^{-1} = q_{i-1}q_i(q_{i-2}q_{i-1})^{-1} = q_{i-1}q_iq_{i-1}q_{i-2}$ as desired.

\section{Cactus group action on words} \label{sec:words}
Define a growth diagram $\Boom_{[i,j]}$ as in Figure \ref{fig:tau_ij}.  Note the diagram also depends on a positive integer $m$, but we suppress this dependence.  If one attached to this diagram two copies of $\Evac_{m+i-2}$, one to the beginning and one to the end, the result would be $\Evac_{[m-1+i,m-1+j]}$.  This ``conjugation of growth diagrams'' does not affect the locality given by Proposition \ref{prop:qij_local}.  So, in the notation of Figure \ref{fig:tau_ij}, we have that $[eg]$ is a function of $[df]$ alone, and that the rest of the input and output agree.

\begin{figure}
\begin{pspicture}(0,-.5)(12,6.5)
\pspolygon(4,0)(0,4)(2,6)(8,0)(12,4)(10,6)(4,0)(8,0)
\uput[ul](1,5){$j-i+1$}
\uput[ur](11,5){$j-i+1$}
\uput[dl](2,2){$m+i-2$}
\uput[dl](4,4){$m+i-2$}
\uput[dr](8,4){$m+i-2$}
\uput[dr](10,2){$m+i-2$}

\uput[dl](4,0){$a$}
\uput[dr](8,0){$b$}
\uput[u](6,2){$c$}
\uput[l](0,4){$d$}
\uput[r](12,4){$e$}
\uput[u](2,6){$f$}
\uput[u](10,6){$g$}

\end{pspicture}
\caption{The growth diagram $\Boom_{[i,j]}$}
\label{fig:tau_ij}  
\end{figure}
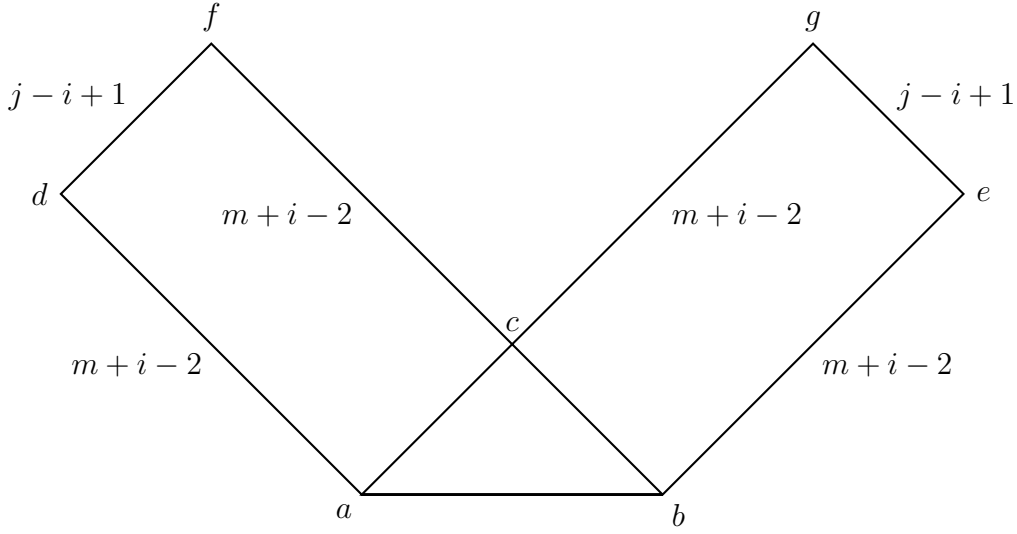 

In what follows, for $S$ a semistandard Young tableau we take the expression $S|_{[i,j]}$ to be the skew tableau obtained by 
\begin{enumerate}
\item taking the subtableau of $S$ consisting of all boxes with entries in the interval from $i$ to $j$ and then
\item decreasing all entries by $i-1$ to facilitate comparing to other tableaux.
\end{enumerate}

We will consider $\Boom_{[i,j]}$ as acting on words of length $m$.  More precisely, let $w \in W_n$ be of length $m$.  Then $T(w)$, as defined in Section \ref{sec:semistandard}, is of shape $\delta_m / \delta_{m-1}$ and has entries between $1$ and $n$. 	Take any $S \in \SSYT(\delta_{m})$ with the property that $S|_{[m,m-1+n]} = T(w)$ as the input to $\Boom_{[i,j]}$.  Let $S'$ be the output.  By locality, $S$ and $S'$ only differ on the interval from $m-1+i$ to $m-1+j$.  In particular, $S'|_{[m,m-1+n]}$ also has shape $\delta_m / \delta_{m-1}$ and hence equals $T(w')$ for some word $w'$.  Also by locality, $w'$ depends only on $w$ and not on the remainder of $S$.  As such, we define $\tau_{[i,j]}: W_n \to W_n$ by $\tau_{[i,j]}(w) = w'$.  

There is even a little more to conclude from locality, namely that $w$ and $w'$ only differ in the placement of the letters in the interval from $i$ to $j$.  To determine the relevant part of $w'$, which is encoded by $T(w')|_{[i,j]} = [eg]$, the input $S$ need only satisfy $S|_{[m-1+i,m-1+j]} = T(w)|_{[i,j]}$, a fact that will be useful shortly.

\begin{ex}
We will show that 
\[\tau_{[2,4]}(215324) = 215434.\]
The calculation is given in Figure \ref{fig:tau action example}. We choose $S$ by filling $\delta_{m-1}$ by anti-diagonals, i.e. with one $1$, two $2$'s, three $3$'s, etc.  Note the locality, namely that the input and output words agree except in the placement of the $2$'s, $3$'s, and $4$'s.
\begin{figure}
\scalebox{.8}{
\begin{pspicture}(-1,-.5)(18,10)
\pspolygon(0,6)(6,0)(15,9)(18,6)(12,0)(3,9)
\psline(1,7)(8,0)(16,8)
\psline(2,8)(10,0)(17,7)
\psdots(4,10)(14,10)
\multirput(5,1)(-1,1){5}{\psline(0,0)(3,3)}
\multirput(13,1)(1,1){5}{\psline(0,0)(-3,3)}
\uput[d](6,0){$\emptyset$}
\uput[d](8,0){$\emptyset$}
\uput[d](10,0){$\emptyset$}
\uput[d](12,0){$\emptyset$}
\uput[l](5,1){$1$}
\uput[l](7,1){$2$}
\uput[l](9,1){$1$}
\uput[l](11,1){$1$}
\uput[l](13,1){$1$}
\uput[l](4,2){$21$}
\uput[l](6,2){$21$}
\uput[l](8,2){$21$}
\uput[l](10,2){$2$}
\uput[l](12,2){$11$}
\uput[l](14,2){$21$}
\uput[l](3,3){$321$}
\uput[l](5,3){$311$}
\uput[l](7,3){$22$}
\uput[l](9,3){$31$}
\uput[l](11,3){$21$}
\uput[l](13,3){$211$}
\uput[l](15,3){$321$}
\uput[l](2,4){$4321$}
\uput[l](4,4){$4211$}
\uput[l](6,4){$321$}
\uput[l](8,4){$32$}
\uput[l](10,4){$32$}
\uput[l](12,4){$311$}
\uput[l](14,4){$3211$}
\uput[l](16,4){$4321$}
\uput[l](1,5){$54321$}
\uput[l](3,5){$53211$}
\uput[l](5,5){$4311$}
\uput[l](7,5){$421$}
\uput[l](11,5){$421$}
\uput[l](13,5){$4211$}
\uput[l](15,5){$43211$}
\uput[l](17,5){$54321$}
\uput[l](0,6){$54322$}
\uput[l](2,6){$553211$}
\uput[l](4,6){$54211$}
\uput[l](6,6){$5311$}
\uput[l](12,6){$5311$}
\uput[l](14,6){$53211$}
\uput[l](16,6){$543211$}
\uput[l](18,6){$54322$}
\uput[l](1,7){$553221$}
\uput[l](3,7){$554211$}
\uput[l](5,7){$64211$}
\uput[l](13,7){$64211$}
\uput[l](15,7){$553211$}
\uput[l](17,7){$543221$}
\uput[l](2,8){$554221$}
\uput[l](4,8){$654211$}
\uput[l](14,8){$654211$}
\uput[l](16,8){$553221$}
\uput[l](3,9){$654221$}
\uput[l](15,9){$654221$}
\uput[l](4,10){$654321$}
\uput[l](14,10){$654321$}
\end{pspicture}
}
\caption{The action of $\tau_{[2,4]}$.}
\label{fig:tau action example}
\end{figure}
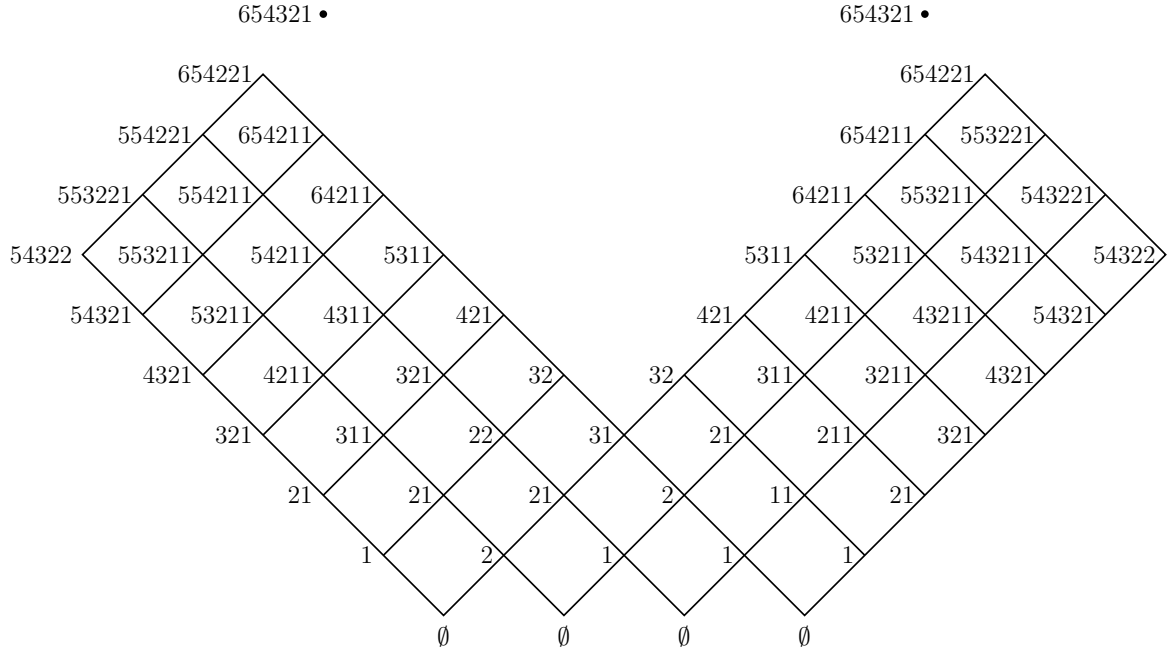
\end{ex}

We now relate the action of $\tau_{[i,j]}$ on words with the action of $q_{[i,j]}$ on tableaux, which will then allow us to conclude that the former satisfies the cactus relations.

\begin{thm} \label{thm:q2tau}
Let $w \in W_n$ of length $m$. Choose a semistandard tableau $S$ of shape $\delta_m$ with $S|_{[m,m-1+n]} = T(w)$.  Let $P = \jdt(T(w))$ (use the sequence of slides dictated by $S|_{[1,m-1]}$) , let $P' = q_{[i,j]}(P)$, and construct $T'$ from $P'$ by undoing the $\jdt$ procedure with the same sequence of reverse slides.  Then $T' = T(w')$ where $w' = \tau_{[i,j]}(w)$.
\end{thm}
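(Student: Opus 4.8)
The plan is to unwind the two growth diagrams involved---the diagram $\Boom_{[i,j]}$ acting on $S \in \SSYT(\delta_m)$ with $S|_{[m,m-1+n]} = T(w)$, and the diagram $\Rect_{m-1,\ast}$-then-$\Evac_{[i,j]}$-then-$\Rect^{-1}$ picture implicit in the statement---and check they compute the same thing by comparing regions. First I would recall, via Proposition \ref{prop:rskjdt} and the confluence of semistandard jdt, that the specified sequence of slides determined by $S|_{[1,m-1]}$ rectifies $T(w)$ to $P = \RSK$-type insertion tableau; the growth-diagram incarnation of this is precisely a $\Rect_{m-1,m}$ block (restricted appropriately) sitting below $T(w)$ in a diagram whose SW-NE diagonals record the intermediate tableaux. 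So the map $w \mapsto T(w) \mapsto P$ is realized by a rectangle, and undoing it is the mirrored rectangle.

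Next I would stack things: take the growth diagram that on the left rectifies $T(w)$ via $S|_{[1,m-1]}$, then in the middle applies $\Evac_{[m-1+i,m-1+j]}$ (equivalently $q_{[i,j]}$ acting on the rectified picture, after the index shift by $m-1$), then on the right de-rectifies by the reverse slides. The claim is that this composite equals $\Boom_{[i,j]}$ applied to $S$. Here is where I would use the remark in Section \ref{sec:words} that conjugating $\Boom_{[i,j]}$ by two copies of $\Evac_{m+i-2}$ at the ends yields $\Evac_{[m-1+i,m-1+j]}$, together with the locality from Proposition \ref{prop:qij_local}: locality says the conjugating $\Evac$'s at the ends do not disturb the portion of the data outside the active interval, and in particular they reorganize the ``rectify / de-rectify'' flanks into exactly the $\Rect$ blocks appearing in $\Boom_{[i,j]}$. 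Concretely I would label the vertices as in Figure \ref{fig:tau_ij}, identify the $S|_{[1,m-1]}$-slides with the lower-left rectangle of $\Boom_{[i,j]}$ (so that the diagonal through $a$ carries $P$), identify the central evacuation region with $\Evac_{[i,j]}$ (whose action on the diagonal through $c$ is $q_{[i,j]}$ by Proposition \ref{prop:qij}), and identify the reverse slides with the lower-right rectangle (so the diagonal through $b$ carries $P' = q_{[i,j]}(P)$ and the de-rectification of $P'$ by those reverse slides lands on the shape $\delta_m/\delta_{m-1}$ part of $[eg]$, which is $T(w')$).

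The computation then reduces to two bookkeeping checks. One: the de-rectification of $T'$ from $P'$ really does produce a tableau of skew shape $\delta_m/\delta_{m-1}$, i.e. $S'|_{[m,m-1+n]}$ has the staircase-strip shape---but this is automatic from the locality argument already used to define $\tau_{[i,j]}$ in the paragraph preceding the theorem, since $S'$ differs from $S$ only in the interval $[m-1+i, m-1+j]$ and the interaction with the staircase shape is unchanged. Two: the word $w'$ read off from $T'$ matches $\tau_{[i,j]}(w)$ as defined---but $\tau_{[i,j]}(w)$ was \emph{defined} via exactly the diagram $\Boom_{[i,j]}$, so once the two diagrams are identified this is a tautology. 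I expect the main obstacle to be the careful matching of the $\Rect$ blocks on the flanks of $\Boom_{[i,j]}$ with the rectification/de-rectification steps under the $m-1$ index shift: one has to verify that the sequence of box-slides encoded by $S|_{[1,m-1]}$ is the one that the rectangle in Figure \ref{fig:tau_ij} performs, and that the ``same sequence of reverse slides'' on the output side corresponds to running that rectangle backwards, which is where Lemma \ref{lem:semistandard_growth} and the confluence proposition (decomposing horizontal-strip slides into box slides) do the real work. Everything else is an application of locality (Proposition \ref{prop:qij_local}), Proposition \ref{prop:qij}, and the definitions.
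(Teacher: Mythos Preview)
Your high-level picture matches the paper's: build a single growth diagram that rectifies $T(w)$ using $S|_{[1,m-1]}$, applies $\Evac_{[i,j]}$ to the rectified tableau $P$, and then de-rectifies with the same recording; then compare with $\Boom_{[i,j]}$.  But the way you propose to make the comparison has a gap.

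You write that you would ``identify the $S|_{[1,m-1]}$-slides with the lower-left rectangle of $\Boom_{[i,j]}$.''  These rectangles have different dimensions: the flank of $\Boom_{[i,j]}$ has NW side of length $m+i-2$ and NE side of length $j-i+1$, whereas the rectification block determined by $S|_{[1,m-1]}$ has NW side $m-1$ and NE side $n$.  So the two diagrams cannot be identified as growth diagrams, and the conjugation remark (that $\Evac_{m+i-2}\circ\Boom_{[i,j]}\circ\Evac_{m+i-2}=\Evac_{[m-1+i,m-1+j]}$) plus locality does not by itself convert one into the other.  What must actually be shown is that the \emph{outputs} agree, and for this one needs to produce, inside the big composite diagram, a path of length $m+i-2$ on each side (playing the role of the bottom edges of the $\Boom$ flanks) and prove those two paths coincide.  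Only then can the central portion be recognized as a bona fide $\Boom_{[i,j]}$ computation on some specific input.

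This is exactly the computation the paper carries out.  In its Figure~\ref{fig:q2tau} it tracks vertices $b,c,e,f,g,l,o,p,q,r,s,t,u$ and shows first that $[tv]=[uw]$ (the entries above $j$ survive rectification, the middle diagram, and de-rectification unchanged because $[lr]=[os]$ and the recording edges agree by construction), and then, crucially, that the length-$(m+i-2)$ chains $[bp]$ and $[cq]$ coincide.  The latter is not a formality: it uses confluence of jdt via $[bp]=\jdt([ft])=\jdt([fu])=[cq]$, which in turn relies on $[ft]=[fu]$, obtained from $[fl]=[fo]$ together with the equality $[lt]=[ou]$ established in the first step.  Once $[bp]=[cq]$, the relation between $[pt]$ and $[qu]$ is literally a $\Boom_{[i,j]}$ diagram with that common chain as its lower input, and the conclusion follows.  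Your outline does not supply this step, and the appeal to locality in Proposition~\ref{prop:qij_local} cannot substitute for it, since locality controls the action of $q_{[i,j]}$ on $P$, not the propagation of that action back through the de-rectification rectangle.
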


\begin{proof}
The procedure described in the statement to go from $T(w)$ to $T'$ amounts to applying the growth diagram in Figure \ref{fig:q2tau} to the tableau $S$. Note that by construction $[lr] = [os]$ and $[rv] = [sw]$, so $[tv] = [uw]$. We will now show that $[hp] = [kq]$. First notice that $[fl] = [fo]$, and by the above argument $[lt] = [ou]$, so $[ft] = [fu]$. Thus $[bp] = \jdt([ft]) = \jdt([fu]) = [cq]$. In particular, $[ep] = [gq]$ and $[ae] = \Evac([be]) = \Evac([cg]) = [dg]$. Thus indeed $[hp] = [kq]$. Finally, it is clear that $[pt]$ and $[qu]$ are related by the action of a $\Boom_{[i,j]}$ diagram for some special input tableau.  Putting this all together yields $T' = [kw] = T(\tau_{[i,j]}(w))$ as desired.

\begin{figure}
\scalebox{.8}{
\begin{pspicture}(-2,-1)(16,8)
\psline(3,3)(6,0)(10,4)(14,0)(0,0)(4,4)(8,0)(11,3)
\uput[l](0,0){$a$}
\uput[d](6,0){$b$}
\uput[d](8,0){$c$}
\uput[r](14,0){$d$}
\uput[u](3,3){$e$}
\uput[u](7,1){$f$}
\uput[u](11,3){$g$}
\uput[u](-2,2){$h$}
\uput[u](16,2){$k$}
\uput[u](4,4){$l$}
\uput[u](10,4){$o$}
\uput[l](1,5){$p$}
\uput[u](13,5){$q$}
\uput[u](6,6){$r$}
\uput[u](8,6){$s$}
\uput[u](2,6){$t$}
\uput[u](12,6){$u$}
\uput[u](4,8){$v$}
\uput[u](10,8){$w$}

\uput[l](-1,1){$m-1$}
\uput[l](-.5,3.5){$i-1$}
\uput[l](1.5,5.5){$j-i+1$}
\uput[l](3,7){$n-j$}

\pspolygon(0,0)(6,6)(4,8)(-2,2)
\psline(3,3)(1,5)
\psline(4,4)(2,6)

\pspolygon(14,0)(8,6)(10,8)(16,2)
\psline(11,3)(13,5)
\psline(10,4)(12,6)
\end{pspicture}
}
\caption{The growth diagram relating $q_{[i,j]}$ and $\tau_{[i,j]}$.}
\label{fig:q2tau}
\end{figure}
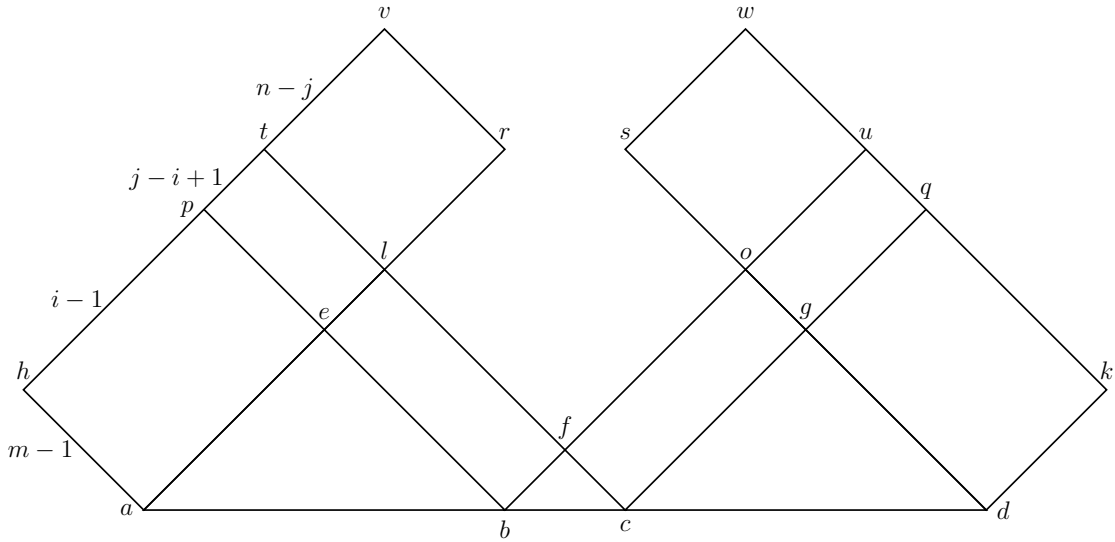
\end{proof}

\begin{rmk}
In particular, the procedure described in the above theorem is independent of the tableau used to do and undo the $\jdt$ procedure.
\end{rmk}


\begin{thm} \label{thm:tauCactus}
The maps $\tau_{[i,j]}$ for $1 \leq i < j \leq n$ satisfy the relations of the cactus group, namely
\begin{itemize}
\item if $i < j$ then $\tau_{[i,j]}^2 = I$,
\item if $i < j < k < l$ then $\tau_{[i,j]}\tau_{[k,l]} = \tau_{[k,l]}\tau_{[i,j]}$,
\item if $i \leq j < k \leq l$ then $\tau_{[i,l]}\tau_{[j,k]}\tau_{[i,l]} = \tau_{[j',k']}$ where $j' = i+l-k$ and $k' = i+l-j$.
\end{itemize}
\end{thm}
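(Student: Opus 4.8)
The plan is to deduce Theorem \ref{thm:tauCactus} directly from Theorem \ref{thm:q2tau} together with the already-established cactus relations for the operators $q_{[i,j]}$ (the unnumbered Theorem* in Section 4.3, i.e. the reformulation of Theorem \ref{thm:map}). The point of Theorem \ref{thm:q2tau} is that $\tau_{[i,j]}$ is, up to a fixed conjugation by the $\jdt$/reverse-$\jdt$ procedure, the same operator as $q_{[i,j]}$: more precisely, if $w\in W_n$ has length $m$, and we write $\Phi(w) = \jdt(T(w)) = P$ for the rectification map, then Theorem \ref{thm:q2tau} says $\Phi(\tau_{[i,j]}(w)) = q_{[i,j]}(\Phi(w))$, because the rectification of $T(w')$ is obtained from that of $T(w)$ by the same sequence of reverse slides that we undid, and the theorem asserts precisely $T' = T(w')$ where $T'$ was built from $P' = q_{[i,j]}(P)$. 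In other words $\Phi$ intertwines the $\tau$-action on words with the $q$-action on straight-shape tableaux.

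First I would make this intertwining statement precise and note one subtlety: $\Phi$ is not injective on $W_n$ (many words rectify to the same $P$), so $\tau_{[i,j]}$ is not literally conjugate to $q_{[i,j]}$ via $\Phi$. Hence I cannot simply pull back the relations. Instead I would argue as follows. Fix a word $w$ and a relation to verify, say $\tau_{[i,l]}\tau_{[j,k]}\tau_{[i,l]}(w) = \tau_{[j',k']}(w)$. Applying $\Phi$ to both sides and using the intertwining repeatedly gives $q_{[i,l]}q_{[j,k]}q_{[i,l]}(P) = q_{[j',k']}(P)$ where $P = \Phi(w)$; this holds by the cactus relations for the $q$'s. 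So both sides of the desired word identity rectify to the same straight tableau $P''$. It remains to upgrade "same rectification" to "same word", and for that I would invoke the locality/crystal package: by the remark following the definition of $\tau_{[i,j]}$ (and the locality discussion after Theorem \ref{thm:q2tau}), the construction of $T'$ from $P'$ by undoing $\jdt$ is independent of the chosen reverse-slide sequence, and moreover by Proposition \ref{prop:qij_local}(3) applied inside $\Boom_{[i,j]}$, the word $\tau_{[i,j]}(w)$ differs from $w$ only in the placement of letters in $[i,j]$ and its $[i,j]$-part is connected to that of $w$ by crystal operators. Thus both $\tau_{[i,l]}\tau_{[j,k]}\tau_{[i,l]}(w)$ and $\tau_{[j',k']}(w)$ agree with $w$ outside $[j',k']$, have $[j',k']$-parts crystal-connected to the $[j',k']$-part of $w$, and rectify to the same $P''$; by Proposition \ref{prop:haiman-like result} they are equal. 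The involution relation $\tau_{[i,j]}^2 = I$ and the commutation relation for disjoint intervals are handled the same way, the latter being essentially immediate from locality since the two operators touch disjoint letter-intervals.

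Alternatively — and this is probably the cleaner writeup — I would bypass the crystal-uniqueness argument entirely by noting that $\tau_{[i,j]}$ has a \emph{global} description as an honest growth-diagram operator on $\SSYT(\delta_m)$ (the $\Boom_{[i,j]}$ picture conjugated as in Figure \ref{fig:tau_ij}), and that on that larger space it literally equals $\Evac_{m+i-2}\,q_{[m-1+i,\,m-1+j]}\,\Evac_{m+i-2}$ restricted appropriately, or can be read off from Figure \ref{fig:q2tau} as a composite of the same building blocks. Since conjugation by the fixed invertible element $\Evac_{m-1}$ (or the $\jdt$ procedure, which is reversible) is a group automorphism, the $\tau_{[i,j]}$ for a fixed $m$ satisfy exactly the relations the $q_{[i,j]}$ do — and by the locality remark the dependence on $m$ is harmless, so the relations hold as identities of maps $W_n \to W_n$. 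The main obstacle, and the step I would be most careful about, is exactly this passage from the conjugation-on-a-big-space picture to a relation among the $\tau_{[i,j]}$ as maps on words: one must check that the "conjugating" data ($S$, and the reverse-slide sequence) can be chosen once and used consistently through an entire word of the relation, which is what the independence-of-$S$ remark after Theorem \ref{thm:q2tau} and the locality of $\Boom_{[i,j]}$ are there to guarantee. Everything else is bookkeeping with Figures \ref{fig:q_ij} and \ref{fig:q2tau}.
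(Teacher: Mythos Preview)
Your second approach is exactly the paper's proof: Theorem \ref{thm:q2tau} (together with the remark on independence of $S$) says that $\tau_{[i,j]}$ is ``do jdt, apply $q_{[i,j]}$, undo jdt with the same reverse-slide data'', so a composite of $\tau$'s is ``do jdt, apply the corresponding composite of $q$'s, undo jdt'', and the cactus relations for the $q$'s transfer immediately. The paper states the involution and disjoint-commutation relations in one line each, just as you do.

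Your first approach is a legitimate alternative but carries a small imprecision. You assert that $\tau_{[i,l]}\tau_{[j,k]}\tau_{[i,l]}(w)$ agrees with $w$ outside $[j',k']$ and that its $[j',k']$-part is crystal-connected to that of $w$; a priori locality only gives agreement outside $[i,l]$ and crystal-connectedness of the $[i,l]$-parts. The fix is simply to apply Proposition \ref{prop:haiman-like result} to the full antidiagonal tableaux $T(\text{LHS})$ and $T(\text{RHS})$ (or to their $[i,l]$-restrictions): both have shape $\delta_m/\delta_{m-1}$, both are crystal-connected to $T(w)$ via operators $f_k$ with $k\in[i,l-1]$, and both rectify to $P''$, so they coincide. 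With that adjustment the argument is correct, though it is strictly more work than the direct route via Theorem \ref{thm:q2tau}, which already packages the ``undo jdt'' step so that no separate uniqueness appeal is needed.
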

\begin{proof}
The first relation is clear since applying the square of $\tau_{[i,j]}$ amounts to applying the growth diagram $\Boom_{[i,j]}$ from left to right and then applying it from right to left. The second relation follows since $\tau_{[i,j]}$ and $\tau_{[k,l]}$ operate on disjoint parts of the word.

Suppose $\tau_{[i,l]}\tau_{[j,k]}\tau_{[i,l]}(w) = w_1$ and $\tau_{[j',k']}(w) = w_2$. By Theorem \ref{thm:q2tau}, $w_1$ is formed by taking $\jdt(T(w))$, applying $q_{[i,l]}q_{[j,k]}q_{[i,l]}$ to it, and then undoing the $\jdt$ with the same upper side. Similarly, $w_2$ is formed by taking $\jdt(T(w))$, applying $q_{[j',k']}$ to it, and then undoing the $\jdt$ with the same upper side. The result follows from the corresponding cactus relation for $q$'s. 
\end{proof}

We now explain how the definition of $\tau_{[i,j]}$ given in this section relates to the one acting on permutations from the introduction.  The latter, in short, applies Sch\"utzenberger involution to the $Q$ tableau of the sub-permutation occupying positions $i$ through $j$, while leaving its $P$ tableau unchanged.  Taking inverses of permutations has the effect of switching the roles of positions and values and also interchanges $P$ and $Q$.  As such, an equivalent theory is obtained by having $\tau_{[i,j]}$ act on the sub-permutation of values $i$ through $j$ by modifying its $P$ tableau.

Now let $w \in W_n$ and $w' = \tau_{[i,j]}(w)$ as defined in the current section.  Let $v$ be the subword of $w$ consisting only of its letters in the interval $i$ through $j$, and similarly for $v'$ with respect to $w'$.  The computation of $w'$ is done using a $\Boom_{[i,j]}$ growth diagram as in Figure \ref{fig:tau_ij}.  A common sequence of jdt moves can be used to bring $[df]$ down to $T(v)$ and $[eg]$ down to $T(v')$.  It follows from Proposition \ref{prop:rskjdt} that $[ac] = P(v)$ and $[bc]=P(v')$, so $P(v') = \Evac(P(v))$.  Meanwhile, $[df]$ is related by crystal operators to $[eg]$, so the same is true of $v$ and $v'$.  By Proposition \ref{prop:crystal preserve Q}, we have $Q(v)=Q(v')$.  As such, the whole procedure is a natural generalization of the action of $\tau_{[i,j]}$ on permutations.

Now that the action of $\tau_{[i,j]}$ on words is explained, Theorem \ref{thm:tauCactus} serves as proof of Corollary \ref{cor:tau}.

\section{Equivalences between cactus-type and BK-type relations}

Consider the free group generated by $t_i$, $i \in \mathbb Z_{>0}$. Consider another free group generated by $q_{[i,j]}$, $1 \leq i < j$. Consider the morphisms $\phi$ and $\psi$ between those groups; $\phi$ is given by  
$$q_{[i,j]} \mapsto q_{j-1} q_{j-i} q_{j-1}, \text{ where } q_i = t_1 t_2 t_1 \dotsc t_i t_{i-1} \dotsc t_1,$$ 
and $\psi$ is given by
$$t_1 \mapsto q_{[1,2]}, \;\; t_2 \mapsto q_{[1,2]}q_{[1,3]}q_{[1,2]}, \;\; t_i \mapsto q_{[1,i]}q_{[1,i+1]}q_{[1,i]}q_{[1,i-1]} \text{ for } i>2.$$ 

In the introduction we stated the following Theorem \ref{thm:rel1}.

\begin{theorem*}
Let 
$$G_1 := \left\langle t_i| t_i^2=I, t_i t_j = t_j t_i \text{ if } |i-j|>1\right\rangle$$
and let 
$$G_2 := \left\langle q_{[i,j]}| q_{[i,j]}^2 = I, q_{[i,j]} q_{[k,l]} q_{[i,j]} = q_{[i+j-l,i+j-k]} \text{ if } i \leq k < l \leq j
	     \right\rangle$$
Then $G_1\cong G_2$, $\phi$ descends to an isomorphism, and $\psi$ to its inverse.
\end{theorem*}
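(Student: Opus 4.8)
The plan is to show that $\phi$ and $\psi$ are mutually inverse isomorphisms between $G_1$ and $G_2$. The first task is to check that $\phi$ and $\psi$ are well-defined as maps between the two \emph{presented} groups, i.e.\ that they respect the defining relations. For $\phi$, one must verify that the words $q_{j-1}q_{j-i}q_{j-1}$ in the free group on the $t_i$ satisfy $q_{[i,j]}^2=I$ and the reversal relation $q_{[i,j]}q_{[k,l]}q_{[i,j]}=q_{[i+j-l,i+j-k]}$ for $i\le k<l\le j$, using only $t_i^2=I$ and the distant-commutation relations. The involutivity $q_{j-1}^2 = (t_1\cdots t_1)^2 = I$ is a direct consequence of the ``evacuation is an involution on the $t$-level'' bookkeeping — or, more elementarily, one checks that $q_m$ is a palindrome in the $t_i$, so $q_m^2 = I$ follows from $t_i^2=I$ alone; and the reversal relation is precisely the third cactus relation translated into $t$'s, which (following the computation at the start of Section~4 of the excerpt, $t_i = q_{i-1}q_iq_{i-1}q_{i-2}$) can be derived symbolically in $G_1$. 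For $\psi$, one must check that the images $\psi(t_i)$ satisfy $\psi(t_i)^2=I$ and $\psi(t_i)\psi(t_j)=\psi(t_j)\psi(t_i)$ for $|i-j|>1$, working in $G_2$; here $\psi(t_i)^2 = I$ is immediate since each $\psi(t_i)$ is a palindrome in the $q_{[1,*]}$'s which are involutions in $G_2$, and the commutation is the delicate part (see below).

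Next, assuming both maps descend, I would show $\phi\circ\psi = \mathrm{id}_{G_1}$ and $\psi\circ\phi = \mathrm{id}_{G_2}$ by checking the composites on generators. For $\phi\circ\psi$: compute $\phi(\psi(t_i))$ and verify it equals $t_i$ in $G_1$. Concretely $\phi(q_{[1,m]}) = q_{m-1}q_{m-1}q_{m-1} = q_{m-1}$ (since the ``$j-i$'' index is $m-1$), so $\phi(\psi(t_1)) = \phi(q_{[1,2]}) = q_1 = t_1$, $\phi(\psi(t_2)) = q_1 q_2 q_1$, and for $i>2$, $\phi(\psi(t_i)) = q_{i-1}q_iq_{i-1}q_{i-2}$; each of these was already shown equal to $t_i$ in $G_1$ in the proof of Theorem~\ref{thm:new_relations}. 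For $\psi\circ\phi$: compute $\psi(\phi(q_{[i,j]})) = \psi(q_{j-1})\psi(q_{j-i})\psi(q_{j-1})$ where $\psi(q_m) = \psi(t_1)\psi(t_2)\psi(t_1)\cdots$, expand using the definition of $\psi$ on each $t_k$, and simplify in $G_2$ using the $G_2$-relations to arrive at $q_{[i,j]}$. The cleanest route is probably to first establish, by induction in $G_2$, a closed form for $\psi(q_m)$ — I expect $\psi(q_m) = q_{[1,m]}$ up to the bookkeeping, mirroring the identity $\phi(q_{[1,m]}) = q_{m-1}$ — and then the claim $\psi(\phi(q_{[i,j]})) = q_{[1,j]}q_{[1,j-i+1]}q_{[1,j]} = q_{[i,j]}$ reduces exactly to the $i=1$ third cactus relation holding in $G_2$, which is a defining relation.

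The main obstacle I anticipate is verifying that $\psi$ is well-defined, specifically that $\psi(t_i)$ and $\psi(t_j)$ commute in $G_2$ when $|i-j|>1$. Unlike the cactus group $\C_n$, whose presentation includes the distant-commutation relation $\q_{[i,j]}\q_{[k,l]}=\q_{[k,l]}\q_{[i,j]}$ for $j<k$, the group $G_2$ has \emph{only} the involution and reversal relations — the commutation of disjoint intervals is deliberately omitted (that is the whole point of Theorem~\ref{thm:rel2}, which adds it back). So commutation of $\psi(t_i)$ and $\psi(t_j)$ must be \emph{derived} from involutions and reversals alone, and this is genuinely the heart of the matter: it is a nontrivial word identity in $G_2$ involving the palindromes $q_{[1,i]}q_{[1,i+1]}q_{[1,i]}q_{[1,i-1]}$ at widely separated scales. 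I would attack it by repeatedly using the reversal relation $q_{[1,m]}q_{[k,l]}q_{[1,m]} = q_{[1+m-l,1+m-k]}$ (valid whenever $1\le k<l\le m$) to conjugate the smaller-index generators past $q_{[1,j\pm1]}$, reducing the two four-letter palindromes to a position where they can be seen to slide past each other; equivalently, one transports the identity $t_it_j=t_jt_i$ from the $t$-side through $\psi$ by expressing everything in terms of $q_{[1,*]}$'s and checking the resulting reversal-relation manipulations are internally consistent. Once this commutation is in hand, everything else is routine symbolic verification on generators, and $G_1\cong G_2$ with $\phi,\psi$ mutually inverse follows.
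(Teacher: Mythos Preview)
Your overall architecture matches the paper's: verify that $\phi$ and $\psi$ descend to the quotients, then check on generators that they are mutually inverse. But two points deserve correction.

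First, your ``easy'' steps contain errors. The word $q_m = t_1(t_2t_1)\cdots(t_m\cdots t_1)$ is \emph{not} a palindrome for $m\ge 3$ (already $q_3 = t_1t_2t_1t_3t_2t_1$), so $q_m^2=I$ does not follow from $t_i^2=I$ alone; the paper instead argues via wiring diagrams that $q_m$ and $q_m^{-1}$ lie in the same commutation class, so distant commutation is genuinely used. More seriously, $\psi(t_i) = q_{[1,i]}q_{[1,i+1]}q_{[1,i]}q_{[1,i-1]}$ for $i>2$ is not a palindrome at all, so $\psi(t_i)^2=I$ is \emph{not} immediate from involutivity of the $q_{[1,*]}$. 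The paper obtains it from three applications of the reversal relation:
\[
q_{[1,i]}q_{[1,i-1]}q_{[1,i]} = q_{[2,i]},\qquad q_{[1,i+1]}q_{[2,i]}q_{[1,i+1]} = q_{[2,i]},\qquad q_{[1,i]}q_{[2,i]}q_{[1,i]} = q_{[1,i-1]}.
\]

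Second, you have the difficulty inverted. You dismiss the reversal relation in $G_1$ --- that is, showing
\[
q_{j-1}q_{j-i}q_{j-1}\cdot q_{l-1}q_{l-k}q_{l-1}\cdot q_{j-1}q_{j-i}q_{j-1} \;=\; q_{j-k+i-1}q_{l-k}q_{j-k+i-1}
\]
using only $t_i^2=I$ and distant commutation --- as something that ``can be derived symbolically.'' In the paper this is by far the longest computation: two technical lemmas rewrite each half in terms of the elements $p_i := t_it_{i-1}\cdots t_1$ (via repeated use of $q_i = q_{i-1}p_i = p_{i+1}q_{i+1}$, etc.), and the pieces are then matched by hand. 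Conversely, the commutation $\psi(t_i)\psi(t_j)=\psi(t_j)\psi(t_i)$ in $G_2$, which you flag as the ``heart of the matter,'' the paper dispatches quickly using the single conjugation identity
\[
q_{[a,b]}\,q_{[1,c]}q_{[1,c\pm 1]} \;=\; q_{[1,c]}q_{[1,c\pm 1]}\,q_{[a\pm 1,b\pm 1]}
\]
(an immediate consequence of the reversal relation), which lets each factor of $\psi(t_i)$ slide past $\psi(t_j) = (q_{[1,j]}q_{[1,j+1]})(q_{[1,j]}q_{[1,j-1]})$ with a net index shift of zero. Your instinct to conjugate past $q_{[1,j\pm 1]}$ is exactly right, but this part is short; the real work is on the $G_1$ side.
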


We are now ready to prove this. The proof that the maps $\phi$ and $\psi$ descend to $G_1$ and $G_2$ will be split into four parts. 

\begin{proof} [Proof that $\psi(q_{[i,j]}^2) = I$.]

First, it suffices to show that $q_i^2 = I$, since then $$\psi(q_{[i,j]}^2) = q_{j-1} (q_{j-i} (q_{j-1} q_{j-1}) q_{j-i}) q_{j-1} = I.$$

This can be shown by direct manipulation. A more elegant way however is through the use of a wiring diagram. It is well known that wiring diagrams are in bijection with commutation classes of elements of the symmetric groups, see  \cite{E, MS}. The latter are generated by the Coxeter generators $s_i$ subject to the same relations as our $t_i$-s, plus the braid relations. Thus, if we show that $q_i$ and $q_i^{-1}$ are represented by the same wiring diagram, they must be equal in the symmetric group without relying on the braid relations, and thus they are equal in our group generated by the $t_i$.
 
The proof is then completed by meditation upon Figure \ref{fig:involution}. 
\end{proof}

\begin{figure}
\psset{unit=.7cm}
\begin{pspicture}(20,5)
\multirput(0,0)(11,0){2}{
	\psline(0,1)(1,1)(5,5)(9,5)
	\psline(0,2)(1,2)(2,1)(3,1)(6,4)(9,4)
	\psline(0,3)(2,3)(4,1)(5,1)(7,3)(9,3)
	\psline(0,4)(3,4)(6,1)(7,1)(8,2)(9,2)
	\psline(0,5)(4,5)(8,1)(9,1)}
\rput{45}(3,3){\psellipse[linestyle=dashed](0,0)(3.5,.3)}
\rput{45}(4.5,2.5){\psellipse[linestyle=dashed](0,0)(3,.3)}
\rput{45}(6,2){\psellipse[linestyle=dashed](0,0)(2,.3)}
\rput{45}(7.5,1.5){\psellipse[linestyle=dashed](0,0)(1.5,.3)}
\rput(11,0){
	\rput{135}(6,3){\psellipse[linestyle=dashed](0,0)(3.5,.3)}
	\rput{135}(4.5,2.5){\psellipse[linestyle=dashed](0,0)(3,.3)}
	\rput{135}(3,2){\psellipse[linestyle=dashed](0,0)(2,.3)}
	\rput{135}(1.5,1.5){\psellipse[linestyle=dashed](0,0)(1.5,.3)}
}

\end{pspicture}
\psset{unit=1cm}
\caption{Illustration of the proof of $\psi(q_i) = \psi(q_i^{-1})$ with $i=4$}
\label{fig:involution}
\end{figure}
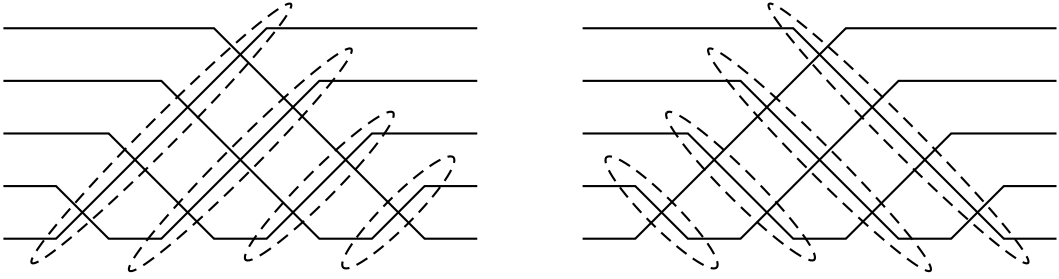

The calculation showing that $\psi$ descends to a map on $G_2$ is the most complicated one. In preparation, we start with the following calculations in $G_1$.

\begin{lem}
\label{lem:qchange}
For each $i$, let $p_i:=t_i t_{i-1}\ldots t_1\in G_1$ (so $q_i = p_1p_2\ldots p_i$). Then
$$q_i = q_{i+1}p_{i+1}^{-1} = p_{i+1}q_{i+1},$$
and
$$q_i = q_{i-1}p_{i} = p_{i}^{-1}q_{i-1}.$$
\end{lem}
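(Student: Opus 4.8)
The plan is to establish the four identities in Lemma~\ref{lem:qchange} by purely formal manipulation inside $G_1$, using only the relations $t_i^2 = I$ and $t_it_j = t_jt_i$ for $|i-j|>1$, since $G_1$ is defined by exactly these relations and the statement should not require the braid relations. Recall $q_i = p_1p_2\cdots p_i$ and $p_i = t_it_{i-1}\cdots t_1$. First I would observe that $q_i = q_{i-1}p_i$ is immediate from the definitions, as $q_i = (p_1\cdots p_{i-1})p_i = q_{i-1}p_i$. Since each $p_i$ is a product of distinct $t_k$ (each appearing once) and $t_k^2 = I$, the inverse of $p_i$ is the reverse word $p_i^{-1} = t_1t_2\cdots t_i$; similarly $q_i^{-1}$ exists. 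From $q_i = q_{i-1}p_i$ we get $q_{i-1} = q_ip_i^{-1}$, which after reindexing $i \mapsto i+1$ gives $q_i = q_{i+1}p_{i+1}^{-1}$, the first half of the first line.

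The content of the lemma is therefore the two ``reversed'' identities $q_i = p_{i+1}q_{i+1}$ and $q_i = p_i^{-1}q_{i-1}$. For $q_i = p_i^{-1}q_{i-1}$, equivalently $q_{i-1} = p_iq_i \cdot$ (after inverting appropriately) or better $q_i = q_{i-1}p_i$ combined with the claim $q_{i-1}p_i = p_i^{-1}q_{i-1}$, i.e. $p_iq_{i-1}p_i = q_{i-1}$ is not quite right — rather I would prove $p_i q_{i-1} = q_{i-1}p_i^{-1}$ by pushing $p_i = t_it_{i-1}\cdots t_1$ past $q_{i-1} = p_1\cdots p_{i-1}$. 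The key structural fact is a commutation/shift identity: because $q_{i-1}$ only involves $t_1,\dots,t_{i-1}$ and $p_i$ starts with $t_i$ which commutes with everything except $t_{i-1}$, one can slide the leading $t_i$ through $q_{i-1}$ until it reaches the (unique, rightmost) occurrence of $t_{i-1}$ and recombine. The cleanest route is to prove by induction on $i$ the identity $t_i t_{i-1}\cdots t_k \cdot q_{k-1} = q_{k-1}\cdot$ (appropriate reversed tail), or simply to verify the single-step relation $p_{i+1}q_i = q_ip_{i+1}^{-1}$ directly by induction, using $p_{i+1} = t_{i+1}p_i$ (no, $p_{i+1}=t_{i+1}t_i\cdots t_1$, so $p_{i+1} = t_{i+1}t_i\cdots$; better write $p_{i+1}$ in terms of $p_i$ via $p_{i+1} = t_{i+1}t_i\cdots t_1$ and relate to $p_i$ by noting $p_{i+1}$ and $p_i$ share the tail).

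Concretely the inductive step I would carry out: assume $q_{i-1} = p_iq_i$ fails to typecheck in index; instead assume $q_{i-1}p_i^{-1} = p_i^{-1}q_{i-1}$ as the induction hypothesis in disguise and propagate it. I expect the main obstacle to be bookkeeping the index shifts correctly and confirming that every move used is a commutation $t_at_b = t_bt_a$ with $|a-b|>1$ or an involution $t_a^2=I$, never a braid move — this is exactly what makes the statement hold in $G_1$ rather than only in the symmetric group. A robust way to sidestep delicate word-pushing is the wiring-diagram argument already used in the preceding proof: show that the two sides of each identity are represented by the same wiring diagram, so they are equal as permutations via only commutations and involutions, hence equal in $G_1$. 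I would therefore present the proof in two moves: (1) the trivial identities $q_i = q_{i-1}p_i$ and $q_i = q_{i+1}p_{i+1}^{-1}$ from the definition and $p^{-1} = $ reversed word; (2) the reversed identities $q_i = p_{i+1}q_{i+1}$ and $q_i = p_i^{-1}q_{i-1}$ either by a short induction sliding the leading generator of $p$ through $q$, or by the wiring-diagram equality. The hard part is (2), and specifically certifying that no braid relation sneaks in; the wiring-diagram reformulation is the safest way to make that certification rigorous and brief.
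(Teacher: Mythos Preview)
Your plan matches the paper's approach: the identities $q_i=q_{i-1}p_i$ and (after reindexing) $q_i=q_{i+1}p_{i+1}^{-1}$ are immediate, and the only contentful step is $q_i=p_{i+1}q_{i+1}$, proven by induction. The paper's entire inductive argument is the single chain
\[
p_{i+1}q_{i+1}=(t_{i+1}p_i)(q_ip_{i+1})=t_{i+1}q_{i-1}p_{i+1}=q_{i-1}t_{i+1}p_{i+1}=q_{i-1}p_i=q_i,
\]
using $p_{i+1}=t_{i+1}p_i$, the inductive hypothesis $p_iq_i=q_{i-1}$, the fact that $t_{i+1}$ commutes with every generator in $q_{i-1}$, and $t_{i+1}^2=I$; the remaining identity $q_i=p_i^{-1}q_{i-1}$ then follows by reindexing. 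So your worry about braid moves sneaking in is easily dispelled once you write this down, and the ``sliding'' you describe collapses to one commutation step. Your wiring-diagram fallback is not just overkill but, as you have phrased it, does not directly apply: the words $q_i$ and $p_{i+1}q_{i+1}$ have lengths $\binom{i+1}{2}$ and $\binom{i+1}{2}+2(i+1)$ respectively, so the latter is not reduced and the commutation-class argument for reduced words cannot be invoked without first rewriting the identity as a comparison of equal-length words.
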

\begin{proof}
The fact that $q_i = q_{i+1}p_{i+1}^{-1}$ is trivial. The fact that $q_i = p_{i+1}q_{i+1}$ follows by an easy induction:
$$p_{i+1}q_{i+1} = (t_{i+1} p_i) (q_i p_{i+1}) = t_{i+1} q_{i-1}p_{i+1} = q_{i-1} t_{i+1} p_{i+1} = q_{i-1} p_i = q_i.$$
The remaining identities follow from that ones just shown.
\end{proof}

Now we state two technical lemmas for the proof.

\begin{lem}
\label{lem:tech1}
Suppose $0 < i \leqslant k < l \leqslant j$. The following identity holds in $G_1$:
\begin{multline*}
q_{j-1} q_{j-i} q_{j-1} q_{l-1} q_{l-k} = q_{j - (k-i) - 1}\cdot\\
\Bigl((t_{j-(k-i)} t_{j-(k-i)-1}\ldots t_{l-k+2}) (t_{j - (k-i) + 1} t_{j - (k-i)}\ldots t_{l-k+3})\ldots (t_{j-1} t_{j-2}\ldots t_{l-i+1})\Bigr)\cdot\\
\Bigl((t_{j-i+1} t_{j-i}\ldots t_{l-i+2}) (t_{j-i+2} t_{j-i+1}\ldots t_{l-i+3})\ldots (t_{j-1} t_{j-2}\ldots t_{l})\Bigr).
\end{multline*}
\end{lem}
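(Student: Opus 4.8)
The plan is to prove this as an identity in $G_1$, using only the defining relations $t_m^2=I$ and $t_at_b=t_bt_a$ for $|a-b|>1$, together with the relation $q_m^2=I$ established above and the identities of Lemma~\ref{lem:qchange}. First I would eliminate all the $q$'s. From $q_m=q_{m-1}p_m$ and $q_{m-1}^2=I$ we get $q_{m-1}q_m=p_m$, hence $q_mq_{m-1}=p_m^{-1}$; inserting copies of $q_m^2=I$ between consecutive $q$'s and telescoping yields
\[
q_{j-1}q_{j-i}q_{j-1}=q_{j-1}\,p_{j-i+1}p_{j-i+2}\cdots p_{j-1},\qquad
q_{l-1}q_{l-k}=\bigl(p_{l-k+1}p_{l-k+2}\cdots p_{l-1}\bigr)^{-1}.
\]
Setting $n:=j-k+i$ (so $n\le j$ since $i\le k$) and using $q_{j-1}=q_{n-1}\,p_np_{n+1}\cdots p_{j-1}$, the left-hand side of the lemma becomes $q_{n-1}$ times a word in the $p_m$'s and their inverses $\overline{p}_m:=p_m^{-1}=t_1t_2\cdots t_m$. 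Since the right-hand side equals $q_{n-1}$ times $A\cdot B$ (recall $q_{j-(k-i)-1}=q_{n-1}$), after left-cancelling $q_{n-1}$ it suffices to prove the word identity
\[
\bigl(p_np_{n+1}\cdots p_{j-1}\bigr)\bigl(p_{j-i+1}p_{j-i+2}\cdots p_{j-1}\bigr)\bigl(\overline{p}_{l-1}\overline{p}_{l-2}\cdots\overline{p}_{l-k+1}\bigr)=A\cdot B .
\]

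Next I would rewrite the right-hand side using the elementary identity $t_at_{a-1}\cdots t_{b+1}=p_a\overline{p}_b$, which holds because $p_a=(t_a\cdots t_{b+1})(t_b\cdots t_1)$. Applying it to each parenthesized factor turns $A$ into $(p_n\overline{p}_{l-k+1})(p_{n+1}\overline{p}_{l-k+2})\cdots(p_{j-1}\overline{p}_{l-i})$ and $B$ into $(p_{j-i+1}\overline{p}_{l-i+1})(p_{j-i+2}\overline{p}_{l-i+2})\cdots(p_{j-1}\overline{p}_{l-1})$. In this form one sees at once that the multiset of $p$-indices and the multiset of $\overline{p}$-indices agree on the two sides; the content of the identity is that the block $\overline{p}_{l-1}\overline{p}_{l-2}\cdots\overline{p}_{l-k+1}$ can be threaded into the string of $p$'s with its order \emph{reversed}, each $\overline{p}_m$ coming to rest immediately to the right of a designated $p$. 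I would establish this by induction, peeling one factor at a time --- most naturally an induction on $k-i$ with $j-l$ as a secondary parameter --- repeatedly sliding a block $\overline{p}_m=t_1t_2\cdots t_m$ leftward past a run of $p$'s by means of the far-commutations $t_at_b=t_bt_a$ and the cancellations $t_a^2=I$. The hypotheses $i\le k<l\le j$ are precisely what force the indices to be spaced so that the letters that need to commute do commute at each stage; the degenerate cases $i=k$, $i=1$, and $l=j$ (where one of $A$, $B$, or the block of $\overline{p}$'s is empty) fall out of the same argument.

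The main obstacle is exactly that final induction: selecting the induction variables so that the inductive step is genuinely clean, and then bookkeeping carefully which $t$-letters cross which as the reversed block of $\overline{p}$'s is worked through the $p$'s --- the reversal is what makes this a substantial rearrangement rather than a one-line commutation. A secondary point one must watch is that every manipulation stays legal in $G_1$, each move being a far-commutation or an $x^2=I$ cancellation and never a braid move; this is also the reason to set up the reduction to $p$'s and $\overline{p}$'s at the outset, since all of its inputs (Lemma~\ref{lem:qchange} and $q_m^2=I$) were themselves obtained by such moves. Once the reduction is in place, no further conceptual ingredient is required, only patient word manipulation.
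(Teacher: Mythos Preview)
Your approach is essentially the paper's: the same reduction via Lemma~\ref{lem:qchange} to $q_{n-1}$ times $(p_n\cdots p_{j-1})(p_{j-i+1}\cdots p_{j-1})(\overline{p}_{l-1}\cdots\overline{p}_{l-k+1})$, and the same key identity $p_a\overline{p}_b=t_a\cdots t_{b+1}$. The paper's execution of the final rearrangement is a touch more direct than the induction you sketch: since $p_a\overline{p}_b$ involves only indices $\ge b+1$ it commutes with every $\overline{p}_c$ for $c\le b-1$, so one simply combines the rightmost $p$ with the leftmost $\overline{p}$ and slides the resulting $t$-string \emph{rightward} past the remaining (smaller) $\overline{p}$'s---doing this $i-1$ times consumes the second $p$-run and yields $B$, then $k-i$ more times yields $A$---which avoids the difficulty in your ``slide $\overline{p}_m$ leftward'' picture that adjacent $\overline{p}$'s do not commute with one another.
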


\begin{lem}
\label{lem:tech2}
Suppose $0 < i \leqslant k < l \leqslant j$. The following identity holds in $G_1$:
\begin{multline*}
q_{l-1} q_{j-1} q_{j-i}q_{j-1} =
(p_{l}p_{l+1}\ldots p_{j-1})(p_{j-i+1}p_{j-i+2}\ldots p_{j-1})\cdot\\
(p_{j - (k-i) -1}^{-1}p_{j - (k-i) -2}^{-1}\ldots p_{l-k+1}^{-1})q_{l-k}q_{j-(k-i) -1}
\end{multline*}
\end{lem}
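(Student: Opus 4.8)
The plan is to prove the lemma by expanding each of the three blocks of consecutive $p$'s occurring on the right-hand side into a product of just two $q$'s, after which the entire right-hand side telescopes onto the left-hand side. The one preparatory ingredient I would record is the identity
\[
p_a p_{a+1}\cdots p_b = q_{a-1}q_b \qquad (a \le b+1),
\]
valid in $G_1$, with the convention that the left-hand side is $I$ when $a = b+1$. This follows by iterating the relation $q_m = q_{m-1}p_m$ of Lemma~\ref{lem:qchange}, which gives $p_m = q_{m-1}^{-1}q_m = q_{m-1}q_m$ once $q_m^2 = I$ in $G_1$ is known; the latter is exactly what the wiring-diagram argument invoked above for $\psi(q_i^2)=I$ establishes, since it shows $q_i = q_i^{-1}$ already in $G_1$, before any application of $\psi$. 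Telescoping $(q_{a-1}q_a)(q_aq_{a+1})\cdots(q_{b-1}q_b)$ with the relations $q_m^2=I$ then yields the displayed identity.

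Next I would apply this to the three blocks in the statement, writing $n = k-i$ so that $j-(k-i)-1 = j-n-1$. The hypotheses $0 < i \le k < l \le j$ make all three index ranges admissible: $l \le j$ for the first block, $i \ge 1$ for the second, and $l \le j \le j+i-1$ (equivalently $l-k+1 \le (j-n-1)+1$) for the third. This gives
\[
p_l p_{l+1}\cdots p_{j-1} = q_{l-1}q_{j-1}, \qquad p_{j-i+1}p_{j-i+2}\cdots p_{j-1} = q_{j-i}q_{j-1},
\]
and
\[
p_{j-n-1}^{-1}p_{j-n-2}^{-1}\cdots p_{l-k+1}^{-1} = \bigl(p_{l-k+1}\cdots p_{j-n-1}\bigr)^{-1} = (q_{l-k}q_{j-n-1})^{-1} = q_{j-n-1}q_{l-k}.
\]
Substituting these into the right-hand side of Lemma~\ref{lem:tech2} turns it into $q_{l-1}q_{j-1}q_{j-i}q_{j-1}\,q_{j-n-1}\,(q_{l-k}q_{l-k})\,q_{j-n-1}$; cancelling $q_{l-k}^2$ and then $q_{j-n-1}^2$ leaves precisely $q_{l-1}q_{j-1}q_{j-i}q_{j-1}$, the left-hand side.

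I do not expect a genuine obstacle: all the content sits in Lemma~\ref{lem:qchange} and the involutivity of the $q_m$ in $G_1$. The only point requiring care is the degenerate cases $i=1$ or $l=j$, in which one or two of the three blocks is the empty product; in each such case the corresponding telescoped expression $q_{a-1}q_b$ also reduces to $I$ because its two indices coincide, so the collapse above still goes through verbatim. One could alternatively try to deduce Lemma~\ref{lem:tech2} from Lemma~\ref{lem:tech1} by taking inverses (using $q_m^{-1}=q_m$ and $(p_ap_b^{-1})^{-1}=p_bp_a^{-1}$) and left-multiplying by $q_{l-k}$, but reconciling the resulting tail with the one in the statement needs further rearrangement, so I expect the direct computation outlined here to be the cleaner route.
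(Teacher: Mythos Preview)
Your proof is correct. Both you and the paper rely only on Lemma~\ref{lem:qchange} together with the involutivity $q_m^2 = I$ in $G_1$ (established by the wiring-diagram argument), so the underlying content is the same. The organization differs: the paper starts from the left-hand side $q_{l-1}q_{j-1}q_{j-i}q_{j-1}$ and repeatedly applies Lemma~\ref{lem:qchange} to peel off or absorb one $p_m$ at a time over seven displayed lines, whereas you work from the right-hand side and isolate the single telescoping identity $p_a p_{a+1}\cdots p_b = q_{a-1}q_b$ up front, reducing the whole verification to three substitutions and two cancellations. Your route is genuinely shorter and makes the structure of the right-hand side transparent; the paper's step-by-step approach has the minor advantage of showing how one might \emph{discover} the right-hand side rather than merely verify it.
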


\begin{proof} [Proof that $\psi(q_{[i,j]} q_{[k,l]} q_{[i,j]}) = \psi(q_{[j-k+i,j-l+i]})$]
We need to show that 
\[q_{j-1} q_{j-i} q_{j-1} q_{l-1} q_{l-k} q_{l-1} q_{j-1} q_{j-i}q_{j-1} = q_{j-k+i-1} q_{l-k} q_{j-k+i-1}\]
The left hand side is just the product of the left hand sides in the two preceding lemmas.

We will now look in more detail at what happens when we multiply the right hand sides. Note that 
\[t_{j-1}t_{j-2}\ldots t_{l} p_l p_{l+1}\ldots p_{j-1} = p_{l-1}p_{l}\dots p_{j-2}.\]
The right hand side of Lemma \ref{lem:tech1} consists of three lines; by repeating the above calculation we conclude that 
the third line multiplied by $p_{l}p_{l+1}\dots p_{j-1}$ yields 
\[p_{l-i+1}p_{l-i+2}\dots p_{j-i}.\]
By same logic, multiplying the second line by $p_{l-i+1}p_{l-i+2}\dots p_{j-1}$ yields 
\[p_{l-k+1}p_{l-k+2}\dots p_{j-(k-i)-1}.\]
This neatly cancels the sequence of $p^{-1}$'s, finishing the proof.
\end{proof}

\begin{proof}[Proof of Lemma \ref{lem:tech1}]
Let $(*)$ stand for the left hand side of the lemma, namely for $q_{j-1} q_{j-i} q_{j-1} q_{l-1} q_{l-k}$. Repeatedly using Lemma \ref{lem:qchange} (and the fact that $q$'s are involutions) yields
\begin{multline*}
(*) = (q_{j-(k-i)-1}p_{j-(k-i)}p_{j-(k-i)+1}\ldots p_{j-1})\xout{q_{j-i}}(\xout{q_{j-i}}p_{j-i+1}p_{j-i+2}\ldots p_{j-1})\\(p_{l-1}^{-1}p_{l-2}^{-1}\ldots p_{l-k+1}^{-1} \xout{q_{l-k}})\xout{q_{l-k}}
\end{multline*}
Now notice that $p_{j-1}p_{l-1}^{-1} = t_{j-1}t_{j-2}\ldots t_l$ commutes with $p_{l-2}$, $p_{l-3}$, $p_{l-k+1}$. Repeating this argument gives
\begin{multline*}
(*) = q_{j-(k-i)-1}(p_{j-(k-i)}p_{j-(k-i)+1}\ldots p_{j-1})(p_{l-i}^{-1}p_{l-i-1}^{-1}\ldots p_{l-k+1}^{-1})\\
\Bigl((t_{j-i+1}t_{j-i}\ldots t_{l-i+2})(t_{j-i+2}t_{j-i+1}\ldots t_{l-i+3}) \ldots(t_{j-1}t_{j-2}\ldots t_l)\Bigr)
\end{multline*}
Doing the same thing for the first and second parentheses group gives
\begin{multline*}
(*) = q_{j-(k-i)-1}\\
\Bigl((t_{j-(k-i)}t_{j-(k-i)-1}\ldots t_{l-k+2})(t_{j-(k-i)+1}t_{j-(k-i)}\ldots t_{l-k+3}) \ldots(t_{j-1}t_{j-2}\ldots t_{l-i+1})\Bigr)\\
\Bigl((t_{j-i+1}t_{j-i}\ldots t_{l-i+2})(t_{j-i+2}t_{j-i+1}\ldots t_{l-i+3}) \ldots(t_{j-1}t_{j-2}\ldots t_l)\Bigr)
\end{multline*}
as desired.
\end{proof}

\begin{proof}[Proof of Lemma \ref{lem:tech2}]
Each line in the calculation below consists of repeated use of Lemma \ref{lem:qchange} to increase/decrease indices of $q$'s by pulling out appropriate $p$'s.
$$\begin{array}{rcl}\medskip
& & q_{l-1} q_{j-1} q_{j-i} q_{j-1} \\\medskip
   & = & q_{l-1} q_{j-1} q_{j-i} (p_{j-1}^{-1}p_{j-2}^{-1}\ldots p_{j-(k-i)}^{-1}) q_{j- (k-i) -1}\\\medskip
   & = & q_{l-1} (p_{j-1}^{-1}p_{j-2}^{-1}\ldots p_{j-i+1}^{-1}) \xout{q_{j-i}q_{j-i}} (p_{j-1}^{-1}p_{j-2}^{-1}\ldots p_{j-(k-i)}^{-1}) q_{j- (k-i) -1}\\\medskip
   & = & (p_lp_{l+1}\ldots p_{j-1})q_{j-1} (p_{j-1}^{-1}p_{j-2}^{-1}\ldots p_{j-i+1}^{-1})(p_{j-1}^{-1}p_{j-2}^{-1}\ldots p_{j-(k-i)}^{-1}) q_{j- (k-i) -1}\\\medskip
   & = & (p_lp_{l+1}\ldots p_{j-1})q_{j-i} (p_{j-1}^{-1}p_{j-2}^{-1}\ldots p_{j-(k-i)}^{-1}) q_{j- (k-i) -1}\\\medskip
   & = & (p_lp_{l+1}\ldots p_{j-1})(p_{j-i+1}p_{j-i+2}\ldots p_{j-1})q_{j-1} (p_{j-1}^{-1}p_{j-2}^{-1}\ldots p_{j-(k-i)}^{-1}) q_{j- (k-i) -1}\\\medskip
   & = & (p_lp_{l+1}\ldots p_{j-1})(p_{j-i+1}p_{j-i+2}\ldots p_{j-1})q_{j-(k-i)-1}  q_{j- (k-i) -1}\\\medskip
   & = & (p_lp_{l+1}\ldots p_{j-1})(p_{j-i+1}p_{j-i+2}\ldots p_{j-1})\\\medskip
	 &   & \qquad(p_{j - (k-i) -1}^{-1}p_{j - (k-i) -2}^{-1}\ldots p_{l-k+1}^{-1})q_{l-k}  q_{j- (k-i) -1}
\end{array}$$
\end{proof}

Now we show that $\phi$ descends to $G_1$

\begin{proof} [Proof that $\phi(t_i^2)=I$.]
 The proofs of $\phi(t_1)^2=I$ and of $\phi(t_2)^2=I$ are easy. For $i>2$,
\begin{multline*}
\phi(t_i^2) = q_{[1,i]}q_{[1,i+1]}q_{[1,i]}q_{[1,i-1]}q_{[1,i]}q_{[1,i+1]}q_{[1,i]}q_{[1,i-1]}\\
 = q_{[1,i]}q_{[1,i+1]}q_{[2,i]} q_{[1,i+1]}q_{[1,i]}q_{[1,i-1]}
 = q_{[1,i]} q_{[2,i]} q_{[1,i]} q_{[1,i-1]} = q_{[1,i-1]} q_{[1,i-1]} = I.
\end{multline*}
\end{proof}

Before the next proof, we need the following lemma.

\begin{lem} \label{lem:comm}
 The following identities are satisfied in $G_2$.
 For $a < b \leq c$ we have $$q_{[a,b]} q_{[1,c]} q_{[1,c+1]} = q_{[1,c]} q_{[1,c+1]} q_{[a+1,b+1]}.$$
 For $1 < a < b \leq c$ we have $$q_{[a,b]} q_{[1,c]} q_{[1,c-1]} = q_{[1,c]} q_{[1,c-1]} q_{[a-1,b-1]}.$$
\end{lem}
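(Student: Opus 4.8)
The plan is to prove Lemma \ref{lem:comm} by establishing both identities via conjugation and the braid-type relation $q_{[i,j]}q_{[k,l]}q_{[i,j]} = q_{[i+j-l,i+j-k]}$ that holds in $G_2$ by definition, rather than attempting direct manipulation of Bender-Knuth words. First I would observe that the key structural fact available in $G_2$ is that $q_{[1,c]}q_{[1,c+1]}$ acts by conjugation as an ``index shift'' on nested intervals contained in $[1,c]$: applying the $G_2$-relation to $q_{[1,c+1]} q_{[a,b]} q_{[1,c+1]}$ (valid since $1 \le a < b \le c+1$) gives $q_{[c+2-b,c+2-a]}$, i.e. reflection inside $[1,c+1]$, and applying it again to $q_{[1,c]} q_{[c+2-b, c+2-a]} q_{[1,c]}$ (valid when $c+2-b \ge 1$ and $c+2-a \le c$, i.e. $a \ge 2$; the boundary case $a=1$ must be checked separately) gives reflection inside $[1,c]$, namely $q_{[a+1, b+1]}$ since $c+1-(c+2-a) = a-1$ and $c+1-(c+2-b) = b-1$, and then... let me recompute: $c+1 - (c+2-b) +1 = b$, hmm — I would carefully track that the composition of the two reflections is exactly the shift $[a,b] \mapsto [a+1,b+1]$, which is forced because a reflection of $[1,c+1]$ about its center followed by a reflection of $[1,c]$ about its center is a translation by $1$.

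Concretely, for the first identity I would write
\[
q_{[1,c]} q_{[1,c+1]} q_{[a,b]} \bigl(q_{[1,c]} q_{[1,c+1]}\bigr)^{-1} = q_{[1,c]} q_{[1,c+1]} q_{[a,b]} q_{[1,c+1]} q_{[1,c]},
\]
using $q_{[1,c+1]}^{-1} = q_{[1,c+1]}$ and $q_{[1,c]}^{-1} = q_{[1,c]}$ (these involution relations are among the defining relations of $G_2$). Then I would simplify the inner three-term product $q_{[1,c+1]} q_{[a,b]} q_{[1,c+1]} = q_{[c+2-b,\,c+2-a]}$ by the braid relation, and simplify the resulting $q_{[1,c]} q_{[c+2-b,c+2-a]} q_{[1,c]} = q_{[a+1,b+1]}$ by the braid relation again, being careful that the interval $[c+2-b, c+2-a]$ does lie inside $[1,c]$. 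The second identity is entirely analogous with the roles of $q_{[1,c]}$ and $q_{[1,c-1]}$ interchanged and the shift going the other way; alternatively it follows from the first by substituting $c \mapsto c-1$ and relabeling $(a,b) \mapsto (a-1,b-1)$, then conjugating, so I would likely just remark that it follows by the same argument (or is literally equivalent to the first after reindexing).

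The main obstacle I anticipate is the boundary/edge cases in the interval arithmetic: one must check that each application of the braid relation $q_{[i,j]}q_{[k,l]}q_{[i,j]} = q_{[i+j-l,i+j-k]}$ genuinely satisfies the hypothesis $i \le k < l \le j$, and in particular whether the hypotheses $a < b \le c$ (resp.\ $1 < a < b \le c$) in the statement are exactly what is needed, including the case $a = 1$ in the first identity where $c+2-a = c+1 \not\le c$, so the second braid step would need $q_{[1,c]}$ to act on an interval whose right endpoint is $c+1$ — this case has to be handled by noting $[a,b] = [1,b] \subseteq [1,c]$ and doing the conjugations in the opposite order (first conjugate by $q_{[1,c]}$, which is legal since $b \le c$, then by $q_{[1,c+1]}$). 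I would organize the proof to dispatch the generic case by the clean two-step reflection argument and then separately verify the remaining boundary case, keeping the total length short.
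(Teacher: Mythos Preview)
Your approach is the same as the paper's---two applications of the $G_2$ braid relation, interpreted as ``reflection composed with reflection equals translation''---but you have the conjugation reversed, and this is what creates both your arithmetic slip and the phantom boundary case.

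The identity $q_{[a,b]}\,P = P\,q_{[a+1,b+1]}$ with $P = q_{[1,c]}q_{[1,c+1]}$ is equivalent to $P^{-1} q_{[a,b]} P = q_{[a+1,b+1]}$, i.e.\ to
\[
q_{[1,c+1]}\,q_{[1,c]}\,q_{[a,b]}\,q_{[1,c]}\,q_{[1,c+1]} = q_{[a+1,b+1]},
\]
not to $P q_{[a,b]} P^{-1} = q_{[a+1,b+1]}$ as you wrote. With the inner conjugation by $q_{[1,c]}$ (legal since $a\ge 1$, $b\le c$) one gets $q_{[c-b+1,\,c-a+1]}$; the outer conjugation by $q_{[1,c+1]}$ (legal since $c-b+1\ge 1$ and $c-a+1\le c+1$) then gives $q_{[a+1,b+1]}$ on the nose, for all $a\ge 1$. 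This is exactly the two-line computation in the paper, and no separate $a=1$ case is needed. Your computation $q_{[1,c]}q_{[c+2-b,c+2-a]}q_{[1,c]}$ actually yields $q_{[a-1,b-1]}$, not $q_{[a+1,b+1]}$; in effect you were proving the \emph{second} identity (shifted by one in $c$) rather than the first. The ``fix'' you proposed for the boundary case---doing the $q_{[1,c]}$ conjugation first---is in fact the correct order throughout.
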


\begin{proof}
 We have 
 $$q_{[a,b]} q_{[1,c]} q_{[1,c+1]} =  q_{[1,c]} q_{[c-b+1, c-a+1]} q_{[1,c+1]} = q_{[1,c]} q_{[1,c+1]} q_{[a+1,b+1]}.$$
We also have 
 $$q_{[a,b]} q_{[1,c]} q_{[1,c-1]} = q_{[1,c]} q_{[c-b+1, c-a+1]} q_{[1,c-1]} = q_{[1,c]} q_{[1,c-1]} q_{[a-1,b-1]}.$$
\end{proof}

\begin{proof} [Proof that $\phi(t_i t_j) = \phi(t_j t_i)$.]
Assume $i+2 \leq j$. For $i \not = 1,2$ we have 
$$\phi(t_i t_j) = q_{[1,i]} q_{[1,i+1]} q_{[1,i]} q_{[1,i-1]} (q_{[1,j]} q_{[1,j+1]}) (q_{[1,j]} q_{[1,j-1]}).$$
Applying Lemma \ref{lem:comm} for $c=j$, $a=1,2$ and $b = i-1, i, i+1$ we have 
\[\begin{array}{rcl}\medskip
\phi(t_i t_j)
 &=& q_{[1,i]} q_{[1,i+1]} q_{[1,i]} (q_{[1,j]} q_{[1,j+1]}) q_{[2,i]} (q_{[1,j]} q_{[1,j-1]})\\\medskip
 &=& q_{[1,i]} q_{[1,i+1]} q_{[1,i]}  (q_{[1,j]} q_{[1,j+1]}) (q_{[1,j]} q_{[1,j-1]}) q_{[1,i-1]}\\\medskip 
 &=& \ldots\\\medskip  
 &=&(q_{[1,j]} q_{[1,j+1]}) (q_{[1,j]} q_{[1,j-1]}) q_{[1,i]} q_{[1,i+1]} q_{[1,i]} q_{[1,i-1]},
\end{array}\]
as desired. 

Essentially the same proof works for $i=1,2$.
\end{proof}

\begin{proof} [Proof that $\phi$ and $\psi$ are inverses.]
We have 
\[\psi(\phi(t_1)) = \psi(q_{[1,2]}) = q_1^3 =q_1 = t_1\]
\[\psi(\phi(t_2)) = \psi(q_{[1,2]}q_{[1,3]}q_{[1,2]}) = q_1^3q_2^3q_1^3 =q_1q_2q_1 = t_2\]
\[\begin{array}{rcl}\medskip
\psi(\phi(t_i)) & = & \psi(q_{[1,i]}q_{[1,i+1]}q_{[1,i]}q_{[1,i-1]})\\ \medskip 
 & = & q_{i-1}q_iq_{i-1}q_{i-2} \\ \medskip
 & = & p_i q_{i-1}q_{i-2}\\ \medskip
 & = & t_i p_{i-1} q_{i-1}q_{i-2}\\ \medskip
 & = & t_i q_{i-2}q_{i-2}\\ \medskip
 & = & t_i
\end{array}\]
Thus $\phi$ is injective and is the right inverse of $\psi$. But it is easy to see that every $q_{[i,j]}$ is in the image of $\phi$, so it 
is a bijection. Thus $\phi$ and $\psi$ are indeed inverses.
\end{proof}

In the introduction we stated the following Theorem \ref{thm:rel2}.
\begin{theorem*} 
 The relations $$t_i^2=I, \;\; t_i t_j = t_j t_i \text{ if } |i-j|>1 \text{  and  } (t_i q_{k-1}q_{k-j}q_{k-1})^2 = I \text{ for } i+1 < j$$ are equivalent to the relations 
 $$q_{[i,j]}^2 = I, \;\; q_{[i,j]} q_{[k,l]} q_{[i,j]} = q_{[i+j-l,i+j-k]} \text{ if } i \leq k < l \leq j,$$ $$\text{  and  } q_{[i,j]} q_{[k,l]} = q_{[k,l]} q_{[i,j]} \text{ if } j<k.$$
\end{theorem*}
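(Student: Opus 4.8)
The plan is to bootstrap from Theorem~\ref{thm:rel1} rather than redo anything from scratch. Write $G_1'$ and $G_2'$ for the two groups in the statement. They are quotients of the groups $G_1$ and $G_2$ of Theorem~\ref{thm:rel1}: one has $G_1'=G_1/N_1$, where $N_1$ is the normal closure of the relators $(t_iq_{k-1}q_{k-j}q_{k-1})^2$ for $i+1<j<k$, and $G_2'=G_2/N_2$, where $N_2$ is the normal closure of the commutation relators $q_{[i,j]}q_{[k,l]}q_{[i,j]}^{-1}q_{[k,l]}^{-1}$ for $j<k$. Theorem~\ref{thm:rel1} already supplies mutually inverse isomorphisms $\phi\colon G_2\to G_1$ and $\psi\colon G_1\to G_2$, so the whole statement will follow once I establish the two inclusions $\psi(N_1)\subseteq N_2$ and $\phi(N_2)\subseteq N_1$: applying $\psi$ to the second inclusion and using $\psi\circ\phi=\mathrm{id}$ turns it into $N_2\subseteq\psi(N_1)$, so together with the first inclusion one gets $\psi(N_1)=N_2$, whence $\psi$ descends to an isomorphism $G_1'\xrightarrow{\sim}G_2'$ with inverse induced by $\phi$.

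\textbf{The inclusion $\psi(N_1)\subseteq N_2$.} I would compute $\psi$ of the new Bender--Knuth relator. By definition of $\phi$ we have $\phi(q_{[j,k]})=q_{k-1}q_{k-j}q_{k-1}$, hence $\psi(q_{k-1}q_{k-j}q_{k-1})=q_{[j,k]}$ in $G_2$; and for every $i$ the element $\psi(t_i)$ is a product of factors $q_{[1,m]}$ with $m\le i+1$ (one checks this separately for $i=1$, $i=2$, $i>2$ from the definition of $\psi$). When $i+1<j$, every interval $[1,m]$ appearing is disjoint from and lies to the left of $[j,k]$, so modulo the commutation relators $\psi(t_i)$ commutes with $q_{[j,k]}$ in $G_2'$. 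Since $\psi(t_i)^2=\psi(t_i^2)=I$ and $q_{[j,k]}^2=I$ hold already in $G_2$, this yields $\psi\bigl((t_iq_{k-1}q_{k-j}q_{k-1})^2\bigr)=\bigl(\psi(t_i)q_{[j,k]}\bigr)^2=I$ in $G_2'$, i.e.\ the relator maps into $N_2$.

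\textbf{The inclusion $\phi(N_2)\subseteq N_1$.} Here I would use the new relation in the opposite direction. We have $\phi(q_{[i,j]})=q_{j-1}q_{j-i}q_{j-1}$, a word in the letters $t_1,\dots,t_{j-1}$ only. When $j<k$, every index $m$ occurring in it satisfies $m\le j-1\le k-2$, hence $m+1<k$, so the relator $(t_mq_{l-1}q_{l-k}q_{l-1})^2$ belongs to $N_1$. Combined with $t_m^2=I$ and $(q_{l-1}q_{l-k}q_{l-1})^2=I$ (the latter because $q_{l-1}^2=q_{l-k}^2=I$ in $G_1$, which is the wiring-diagram fact from the proof of Theorem~\ref{thm:rel1}), this says $t_m$ commutes with $\phi(q_{[k,l]})=q_{l-1}q_{l-k}q_{l-1}$ in $G_1'$. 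As this holds for every letter of $\phi(q_{[i,j]})$, the whole word $\phi(q_{[i,j]})$ commutes with $\phi(q_{[k,l]})$ in $G_1'$, so the commutation relator maps into $N_1$.

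\textbf{Main obstacle.} Almost everything here is formal once Theorem~\ref{thm:rel1} is in hand; the one point that needs care — and the one I would regard as the main obstacle — is lining up the index ranges. Concretely, I must verify that the hypothesis $i+1<j<k$ of the new Bender--Knuth relation is exactly what forces $[1,i+1]$ and $[j,k]$ to be disjoint (so cactus commutativity applies), and, going the other way, that $j<k$ forces every $t$-index occurring in $\phi(q_{[i,j]})$ to be at most $k-2$, which is precisely the threshold $m+1<k$ under which the new relation is available. Once those inequalities are checked, both inclusions fall out immediately.
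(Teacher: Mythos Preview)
Your proposal is correct and follows essentially the same route as the paper. The paper also bootstraps from Theorem~\ref{thm:rel1} and then verifies the two remaining implications: that each $t_m$ occurring in $\phi(q_{[i,j]})$ commutes with $\phi(q_{[k,l]})$ via the new Bender--Knuth relation (your $\phi(N_2)\subseteq N_1$), and that each $q_{[1,m]}$ occurring in $\psi(t_i)$ commutes with $q_{[j,k]}$ via cactus commutativity (your $\psi(N_1)\subseteq N_2$). Your normal-subgroup framing and explicit check of the index inequalities make the logic more transparent than the paper's terse treatment, but the argument is the same; note that in Section~6 of the paper the labels $\phi$ and $\psi$ are systematically swapped relative to their definitions, so your naming is actually the consistent one.
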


Part of the needed implications follows from Theorem \ref{thm:rel1}. We prove the remaining ones. 

\begin{proof} [Proof that $\psi(q_{[i,j]} q_{[k,l]}) = \psi(q_{[k,l]} q_{[i,j]})$.]
By definition, $\psi(q_{[i,j]}) = q_{j-1} q_{j-i} q_{j-1}$ is an expression of $t_1$ through $t_{j-1}$. Each such $t_i$ commutes with $\psi(q_{[k,l]})$ because $i+1 \leq j < k$. Then so does $\psi(q_{[i,j]})$.
\end{proof}

\begin{proof} [Proof that $\phi((t_i q_{k-1}q_{k-j}q_{k-1})^2) = I$.]
By definition, $\phi(t_i) = q_{[1,i]} q_{[1,i+1]} q_{[1,i]} q_{[1,i-1]} \text{ for } i>2$. Each of $q_{[1,i-1]}$, $q_{[1,i]}$, and $q_{[1,i+1]}$ commutes with  $q_{[j,k]} = \phi(q_{k-1}q_{k-j}q_{k-1})$ because $i+1 < j$. Then so does $\phi(t_i)$. The cases $i=1$ and $i=2$ are similar.
\end{proof}

\bibliographystyle{plain}
\bibliography{BK}

\end{document}